

\documentclass[12pt]{article} 

\usepackage[utf8]{inputenc} 
\usepackage{geometry} 
\geometry{a4paper} 
\usepackage{graphicx} 
\usepackage[bb=boondox,bbscaled=.95,cal=boondoxo]{mathalfa}
\usepackage[all,cmtip]{xy}
\usepackage{booktabs} 
\usepackage{array} 
\usepackage{paralist} 
\usepackage{verbatim} 
\usepackage{subfig} 

\usepackage{authblk}
\usepackage{hyperref}
%


\usepackage{tikz}
\usepackage{tikz-cd}
\usepackage{tkz-euclide}
\usetikzlibrary{patterns}
\usetikzlibrary{calc}

\usepackage{amsfonts,amsmath,latexsym,amssymb} 
\usepackage{amsthm}            
\usepackage{mathptmx}
\usepackage{fancyhdr} 
\pagestyle{fancy} 
\lhead{}\chead{}\rhead{}
\lfoot{}\cfoot{\thepage}\rfoot{}
\usepackage[all,cmtip]{xy}

\usepackage{authblk}
\usepackage{sectsty}
\allsectionsfont{\ttfamily\mdseries\upshape} 
\usepackage{titlesec}
\titleformat{\section}
  {\normalfont\sffamily\Large\bfseries\centering}
  {\thesection}{1em}{}
\titleformat{\subsection}
  {\normalfont\sffamily\large\bfseries\centering}
  {\thesubsection}{1em}{}

\usepackage[english]{babel}
\usepackage[nottoc,notlof,notlot]{tocbibind} 
\usepackage[titles,subfigure]{tocloft} 


\newtheorem{thm}{\textsc{Theorem}}[section]
\newtheorem*{thm*}{\textsc{Theorem}}
\newtheorem{cor}[thm]{\textsc{Corollary}}
\newtheorem{lem}[thm]{\textsc{Lemma}}
\newtheorem{prop}[thm]{\textsc{Proposition}}
\newtheorem*{prop*}{Proposition}
\newtheorem*{lem*}{\textsc{Lemma}}

\newtheorem*{rem*}{\textsc{Remark}}
\newtheorem{rem}[thm]{\textsc{Remark}}
\newtheorem{question}[thm]{\textsc{Question}}
\newtheorem{example}[thm]{\textsc{Example}}

\theoremstyle{definition}
\newtheorem*{exa*}{Example}
\newtheorem*{defn}{\textsc{Definition}}

\theoremstyle{remark}

\DeclareMathOperator{\spec}{spec}

\DeclareMathOperator{\h}{\mathcal{H}}
\DeclareMathOperator{\K}{\mathcal{K}}
\newcommand{\proposition}{\textsc{Proposition}}
\newcommand{\corollary}{\textsc{Corollary}}
\newcommand{\theorem}{\textsc{Theorem}}
\newcommand{\lemma}{\textsc{Lemma}}
\newcommand{\remark}{\textsc{Remark}}

\title{\bf\textsc{Beurling-Fourier Algebras and Complexification}}
\date{}
\author{Olof Giselsson \\\small Department of Mathematics\\\small Oslo University\\  \texttt{\footnotesize olofg@math.uio.no} \and Lyudmila Turowska\\ \small Department of Mathematical Sciences\\ \small Chalmers University of Technology and the University of Gothenburg\\ \texttt{\footnotesize turowska@chalmers.se} }
\begin{document}
\maketitle
\begin{abstract} 
In this paper, we develop a new approach that allows to identify the Gelfand spectrum of weighted Fourier algebras as a subset of an abstract complexification of the corresponding group for a wide class of groups and weights. This generalizes recent related results of Ghandehari-Lee-Ludwig-Spronk-Turowska \cite{gllst} about the spectrum of Beurling-Fourier algebras on some Lie groups. 
 In the case of discrete groups we show that the spectrum of Beurling-Fourier algebra is homeomorphic to the group itself. 
\end{abstract}

\vskip 1cm
\noindent
{\it 2000 Mathematics Subject Classification:} 46J15, 43A40, 43A30, 22D25.

\section{\sc{\textbf{Introduction}}}
Let $G$ be a locally compact group. The Fourier algebra $A(G)$, introduced by Eymard in \cite{Eym} , is a subalgebra of $C_0(G)$ consisting of the coefficients of the left regular representation $\lambda$ of $G$, i.e.
$$A(G)=\{u\in C_0(G)\;|\; u(s)=\langle\lambda(s)\xi,\eta\rangle,\xi,\eta\in L^2(G)\}.$$
The natural norm on $A(G)$ that makes it a Banach algebra is given by
$$\|u\|_{A(G)}=\inf\{\|\xi\|_2\|\eta\|_2\;|\; u(s)=\langle\lambda(s)\xi,\eta\rangle\},$$
where infimum is taken over all possible representations $u(s)=\langle\lambda(s)\xi,\eta\rangle$.
Moreover, $A(G)$ is the unique predual of the group von Neumann algebra $VN(G)$.

The Gelfand spectrum of the algebra is known to be topologically isomorphic to $G$, giving a non-trivial link between topological groups and Banach algebras. We note that the spectral theory has been an important tool in understanding commutative Banach algebras. 

Several authors, including the second author, have been investigating in~\cite{lee-samei, lst, ozrusp, glss, lss, gllst} a weighted version of the Fourier algebra, by imposing a weight that changes the norm structure.  Weighted Fourier algebras for compact quantum groups were studied in \cite{franz-lee}.  Recall that if $G$ is abelian with the dual group $\hat G$, the Fourier algebra $A(G)$ is isometrically isomorphic via the Fourier transform to $L^1(\hat G)$. If $w:\hat G\to [1,+\infty)$ is  a Borel measurable and  sub-multiplicative  function, i.e.
$$
\begin{array}{ccc}
w(st)\leq w(s)w(t),& s,t\in\hat G,
\end{array}
$$
(such $w$ is called a weight function) then $L^1(\hat G,w):=\{f\in L^1(G)\;|\; fw\in L^1(\hat G)\}$ is a subalgebra of $L^1(G)$ and is a Banach algebra with respect to the norm $\|f\|_w=\|fw\|_1$, $f\in L^1(\hat G,w)$.  Its image under inverse Fourier transform gives a weighted version $A(G,w)$ of $A(G)$. 
We note that for a weight function $w$ on $\hat G$ the (unbounded) operator
$$\tilde w=\int_{\hat G}^\oplus w(s)ds$$
defines a closed positive operator affiliated with $L^\infty(\hat G)\simeq VN(G)$  which  satisfies
$\Gamma(\tilde w)\leq \tilde w\otimes \tilde w$, where $\Gamma$ is the comultiplication on $VN(G)$.
This model has been taken in~\cite{lee-samei} and~\cite{gllst} to generalize the notion of weight to general locally compact groups. Accordingly, a weight, called a weight on the dual of $G$,  is a certain unbounded positive operator $\tilde w$ affiliated with $VN(G)$, which,  if in addition  $\tilde w$ is bounded below, i.e. $\omega:=\tilde w^{-1}\in VN(G)$,  satisfies
$$\omega\otimes\omega=\Gamma(\omega)\Omega$$
for a contractive 2-cocycle $\Omega\in VN(G\times G)$ (see \cite{gllst}). One can find numerous examples of non-trivial weights for general compact groups in \cite{lst} and certain connected Lie groups in \cite{gllst}. 
\\

In this paper, we will work with such weight inverse $\omega$ omitting the condition of its positivity. To each $\omega$ we will associate a subspace $A(G,\omega)$  of $A(G)$ which becomes a commutative Banach algebra with respect to a new weighted norm and the pointwise multiplication and so it is natural to study its Gelfand spectrum, $\spec\, A(G,\omega)$. When $G$ is compact and $\omega$ is a positive central weight, this question was studied in \cite{lst}; specific connected Lie groups, namely  $SU(N)$, the Heisenberg group, the reduced Heisenberg group, the Euclidean motion group $E(2)$ and its simply connected cover, were treated in the long paper \cite{gllst}. It has been proved that $\spec\,A(G,\omega)$ is closely related to an (abstract) complexification of $G$. To establish this fact the strategy  in \cite{gllst} was to find a simpler dense subalgebra $\mathcal A$ so that one could easily identify its spectrum, $\spec\,\mathcal A$, and get $\spec\,A(G,\omega)\subset\spec\, \mathcal A$. If $G$ is compact a natural choice is $\mathcal A=\text{Trig}\; G$, the algebra of matrix coefficients of finite-dimensional representations of $G$; $\spec\,\mathcal A$ is then an abstract complexification of $G$, introduced by McKennon in \cite{McK1}, which coincides in the case of compact connected Lie groups with the universal complexification of $G$. For non-compact groups, it seems there is no such natural choice of the subalgebra. In  \cite{gllst} the construction of $\mathcal A$ is rather technical and each $G$ treated in the paper required an individual approach, which heavily involved in particular the theory of group representations and technique of analytic extensions; the technicalities were an obstacle to develop a general theory applicable to any connected Lie group.

In this paper, we propose a different approach to the problem of identifying the spectrum of $A(G,\omega)$ that allows us to realise $\spec\, A(G,\omega)$ as a subset of an abstract complexification of $G$ for a wide class of groups and  weights.

The key idea is the observation that, identifying the dual of $A(G,\omega)$ with $VN(G)$, any multiplicative linear functional corresponds to $\sigma\in VN(G)$  satisfying the same equation as the weight inverse $\omega$, i.e. $\sigma\otimes\sigma=\Gamma(\sigma)\Omega$ for the contractive 2-cocycle $\Omega\in VN(G\times G)$ associated with $\omega$. A simple formal calculation, which we could make to be rigorous under certain conditions,  gives the equality 
$S(\sigma)\sigma=S(\omega)\omega$, where $S$ is the antipode on $VN(G)$. That  allows us  to define a closed operator $T_\sigma$, affiliated with $VN(G)$ and satisfying
$\Gamma(T_\sigma)=T_\sigma\otimes T_\sigma$ (\theorem~\ref{mainthm}).
It is known that the set of all non-zero $T\in VN(G)$ with  $\Gamma(T)=T\otimes T$ coincides with $\lambda(G)=\{\lambda(s)\;|\; s\in G\}$, providing  the embedding of $G$ into the spectrum of $A(G,\omega)$ through the evaluation $u\mapsto u(s)=(\lambda(s),u)$, $s\in G$; the set  $G_{\mathbb C, \lambda}^+$ of all positive solutions $T\in\overline{VN(G)}$ of $\Gamma(T)=T\otimes T$ is the image of the Lie algebra $\Lambda$ of derivations
$$\Lambda=\{\alpha\in \overline{VN(G)}\;|\; \alpha^*=-\alpha,\; \Gamma(\alpha)=\alpha\otimes 1+1\otimes\alpha\}$$ under the exponential map $\alpha\mapsto \exp{(i\alpha)}$;
 $G_{\mathbb C, \lambda}:=\lambda(G)\cdot G_{\mathbb C, \lambda}^+$ is then the space of all solutions to $\Gamma(T)=T\otimes T$ in $\overline{VN(G)}$ (\proposition~\ref{complexi}). Here $\overline{VN(G)}$ is the set of unbounded operators affiliated with $VN(G).$ In many cases including connected compact  and some nilpotent Lie groups $G$, 
 $G_{\mathbb C, \lambda}=\lambda_{\mathbb C}(G^u_{\mathbb C})$, where $G_{\mathbb C}^u$ is the universal complexification of $G$ and $\lambda_{\mathbb C}$ is the extension of the left regular representation to $G_{\mathbb C}^u$.
\\

The paper is organised as follows. In Section $2$, we introduce the notion of a weight inverse $\omega$ on the dual of $G$ and use this to define the Beurling-Fourier algebra $A(G,\omega)$ as a subalgebra of $A(G)$ with a modified norm and identify its dual with $VN(G)$. As $VN(G)$ has the unique predual, we show that $A(G,\omega)$ is isometrically isomorphic to $A(G)$ with a modified product $\cdot_\Omega$, depending on the $2$-cocycle $\Omega$ associated with $\omega$ and not the particular weight $\omega$. In \proposition~\ref{2weights} we give a necessary and sufficient condition for the inclusion $A(G,\omega_1)\subset A(G,\omega_2)$.  
\\

In Section $3$, we review some basic concepts  on unbounded operators and operators affiliated with a von Neumann algebra, and define the $\lambda$-complexification $G_{\mathbb C,\lambda}$ of $G$ as the set of  non-zero (unbounded) closed operators $T$ which are affiliated with  $VN(G)$ and  satisfy the equation  $\Gamma(T)=T\otimes T.$
\\

In Section $4$ we investigate the relation that the $\lambda$-complexification has to  the Gel\-fand spectrum of $A(G,\omega)$. We prove the embedding of  $\spec\, A(G,\omega)$ into the complexification  $G_{\mathbb C,\lambda}$ for a wide class of groups and weights;  this comes down to verifying that $S(\sigma)\sigma=S(\omega)\omega$ holds for the points $\sigma$  in $\spec\, A(G,\omega),$ considered as a subset of $VN(G).$ We also give a heuristic reason why we conjecture that this holds in general. These arguments give immediately  the equality for any virtually abelian group and any weight considered on it. The other cases of $G$ and $\omega$ for which the embedding of $\spec\, A(G,\omega)$ into $G_{\mathbb C,\lambda}$ holds include, for example,   compact, discrete and more general [SIN]-groups with arbitrary weights and  general locally compact groups with weights extended from weights on the dual of abelian or compact subgroups.  Even though we could not establish the inclusion result in full generality our approach allows us to generalise most of the previous results and avoid the main technicalities in \cite{gllst} to find a dense subalgebra which plays the role of $\text{Trig }G$ for the compact case. Moreover, as the main available source of weights on the dual of non-commutative groups are the weights induced from abelian or compact subgroups, \theorem~\ref{very} and \theorem~\ref{veryext} cover most of the known Beurling-Fourier algebras.  For  discrete group $G$ we show that  the spectrum of the corresponding Beurling-Fourier algebra is homeomorphic to $G$. 

Finally, in Section $5$, we discuss some of the questions that arose during our investigation, as well as some examples that show the necessity of certain conditions.

\section{\sc{\textbf{Beurling-Fourier Algebras}}}\label{section2}
For a locally compact group $G,$ we let $\lambda:G\to B(L^2(G))$   be  the left regular representations on $L^{2}(G)$.  Let  $VN(G)\subseteq B(L^{2}(G))$ be the group von Neumann algebra, $C_{r}^*(G)\subseteq VN(G)$ the reduced group $C^{*}$-algebra and $W\in VN(G)\bar\otimes B(L^2(G))$ the fundamental  multiplicative unitary, implementing the co-multiplication $\Gamma:VN(G)\to VN(G)\bar \otimes VN(G)$ as
\begin{equation}\label{gamma}\Gamma(x)=W^{*}(I\otimes x)W.
\end{equation}
Recall that $\Gamma$ is the unique normal $*$-homomorphism satisfying $\Gamma(\lambda(s))=\lambda(s)\otimes\lambda(s)$, $s\in G$, and $W\in B(L^2(G\times G))$ is given by the action 

$$
\begin{array}{ccc}
(W\xi)(s,t)=\xi(ts,t),& \text{for $\xi\in L^2(G\times G).$}
\end{array}
$$
The coproduct $\Gamma$ is co-commutative and satisfies the co-associative law:
\begin{equation}\label{coass}
(\iota\otimes\Gamma)\circ \Gamma=(\Gamma\otimes\iota)\circ\Gamma,
\end{equation}
where $\iota $ is the identity map. 
\begin{defn}[\sc{Weight Inverse}]
A $\omega\in VN(G)$  will be called a \textit{weight inverse on the dual of $G$} (we usually abbreviate this to a \textit{weight inverse}) if
\begin{equation}\label{iw}
\omega\omega^{*}\otimes\omega\omega^{*}\leq \Gamma(\omega\omega^{*})
\end{equation}
and
\begin{equation}\label{kernel}
\ker \omega=\ker \omega^{*}=\{0\}.
\end{equation}
\end{defn}

We note that if
 $\omega\in VN(G)$ is a weight inverse, then $\ker\,\Gamma(\omega)=\ker\,\Gamma(\omega^*)=\{0\}$ which follows easily from (\ref{gamma}) and (\ref{kernel}).

\begin{lem}\label{lem1}
Let $\omega\in VN(G)$ be a weight inverse. Then
there exists an injective $\Omega\in VN(G)\bar \otimes VN(G)\simeq VN(G\times G)$ of norm $||\Omega||\leq 1,$ such that
\begin{equation}\label{weight}
\omega\otimes\omega=\Gamma(\omega)\Omega
\end{equation}
and
$\Omega$ satisfies the $2$-cocycle relation
\begin{equation}\label{2co}
(\iota\otimes \Gamma)(\Omega)(I\otimes \Omega)=(\Gamma\otimes \iota)(\Omega)(\Omega\otimes I),
\end{equation}
\end{lem}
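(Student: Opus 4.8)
The plan is to produce $\Omega$ by an operator-factorisation (Douglas-type) argument and then extract the cocycle identity purely from coassociativity of $\Gamma$.

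First I would invoke Douglas' range-inclusion (factorisation) lemma. Set $A=\omega\otimes\omega$ and $B=\Gamma(\omega)$, both of which lie in $VN(G)\bar\otimes VN(G)$. Since $\Gamma$ is a $*$-homomorphism, $AA^{*}=\omega\omega^{*}\otimes\omega\omega^{*}$ and $BB^{*}=\Gamma(\omega)\Gamma(\omega^{*})=\Gamma(\omega\omega^{*})$, so hypothesis (\ref{iw}) reads exactly $AA^{*}\leq BB^{*}$. Douglas' lemma then yields a bounded operator $\Omega$ with $\|\Omega\|\leq 1$ and $A=B\Omega$, which is (\ref{weight}). Uniqueness and injectivity of $\Omega$ both come for free from the injectivity of $\Gamma(\omega)$ recorded in the remark after (\ref{kernel}): if $\Gamma(\omega)\Omega_{1}=\Gamma(\omega)\Omega_{2}$ then $\mathrm{ran}(\Omega_{1}-\Omega_{2})\subseteq\ker\Gamma(\omega)=\{0\}$, so $\Omega$ is the unique solution; and if $\Omega x=0$ then $(\omega\otimes\omega)x=\Gamma(\omega)\Omega x=0$, whence $x=0$ because $\omega\otimes\omega$ is injective as a tensor product of injective operators, giving $\ker\Omega=\{0\}$.

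To see that $\Omega\in VN(G)\bar\otimes VN(G)$ I would use the commutation theorem $(VN(G)\bar\otimes VN(G))'=VN(G)'\bar\otimes VN(G)'$ and show $\Omega$ lies in the bicommutant. For any $u\in(VN(G)\bar\otimes VN(G))'$ both $\omega\otimes\omega$ and $\Gamma(\omega)$ commute with $u$, hence $\Gamma(\omega)(u\Omega-\Omega u)=u(\omega\otimes\omega)-(\omega\otimes\omega)u=0$; injectivity of $\Gamma(\omega)$ then forces $u\Omega=\Omega u$, so $\Omega\in(VN(G)\bar\otimes VN(G))''=VN(G)\bar\otimes VN(G)$.

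For the $2$-cocycle relation I would compute $\omega\otimes\omega\otimes\omega\in VN(G)\bar\otimes VN(G)\bar\otimes VN(G)$ in two ways, writing $\Gamma_{2}:=(\iota\otimes\Gamma)\Gamma=(\Gamma\otimes\iota)\Gamma$ for the doubly iterated coproduct, the two expressions being equal by coassociativity (\ref{coass}). Factoring the first two legs by (\ref{weight}) and then applying $\Gamma\otimes\iota$ to (\ref{weight}) gives $\omega\otimes\omega\otimes\omega=(\Gamma(\omega)\otimes\omega)(\Omega\otimes I)=\Gamma_{2}(\omega)(\Gamma\otimes\iota)(\Omega)(\Omega\otimes I)$, while factoring the last two legs and applying $\iota\otimes\Gamma$ gives $\omega\otimes\omega\otimes\omega=(\omega\otimes\Gamma(\omega))(I\otimes\Omega)=\Gamma_{2}(\omega)(\iota\otimes\Gamma)(\Omega)(I\otimes\Omega)$. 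Equating the two and cancelling $\Gamma_{2}(\omega)$ on the left yields (\ref{2co}).

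The main obstacle is precisely this final cancellation, which requires $\Gamma_{2}(\omega)=(\iota\otimes\Gamma)\Gamma(\omega)$ to be injective. I would deduce this from $\ker\omega=\{0\}$ via the functional calculus for normal $*$-homomorphisms: such a map $\pi$ commutes with the bounded Borel calculus, so $\chi_{\{0\}}(\pi(T^{*}T))=\pi\big(\chi_{\{0\}}(T^{*}T)\big)$, and since $\ker T=\{0\}$ means $\chi_{\{0\}}(T^{*}T)=0$, one gets $\ker\pi(T)=\{0\}$. Applying this first to $\pi=\Gamma$ and then to $\pi=\iota\otimes\Gamma$ propagates injectivity of $\omega$ through to $\Gamma_{2}(\omega)$. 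Once this is in hand the cancellation $\Gamma_{2}(\omega)P=\Gamma_{2}(\omega)Q\Rightarrow P=Q$ is immediate, and the commutant step above is routine given the tensor-product commutation theorem, so the only genuinely delicate input is the transfer of injectivity under the iterated coproduct.
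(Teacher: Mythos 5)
Your proposal is correct and follows essentially the same route as the paper: your Douglas-lemma factorisation is exactly the paper's explicit construction of $\Omega^{*}$ on the dense range of $\Gamma(\omega^{*})$, the bicommutant argument for membership in $VN(G)\bar\otimes VN(G)$ is identical, and the cocycle identity is obtained from the same two factorisations of $\omega\otimes\omega\otimes\omega$ followed by cancellation of the injective operator $(\iota\otimes\Gamma)(\Gamma(\omega))$. The only difference is that you spell out the propagation of injectivity through the iterated coproduct, which the paper leaves implicit.
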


\begin{proof}
$\ker\Gamma(\omega^*)=\{0\}$ and the inequality~\eqref{iw} give a well-defined linear map
$$
\begin{array}{cccc}
\Omega^{*}: \Gamma(\omega^{*})x\mapsto (\omega^{*}\otimes \omega^{*})x, & \text{for $x\in L^{2}(G)\otimes L^{2}(G),$}
\end{array}
$$
that satisfies $||\Omega^{*}(\Gamma(\omega^{*})x)||^{2}=||(\omega^{*}\otimes \omega^{*})x||^{2}\leq ||\Gamma(\omega^{*})x||^{2}$. As the range $\mathrm{Ran}(\Gamma(\omega^{*}))$ is dense in $L^{2}(G\times G)$, we can  extend  $\Omega^*$ to a bounded linear operator on the whole Hilbert space. Let $\Omega$ be its adjoint. Clearly $||\Omega||\leq 1$ and \eqref{weight} holds.

It is easy to see from~\eqref{kernel} and~\eqref{weight} that $\Omega$ must commute with any element in the commutant $(VN(G)\bar\otimes VN(G))'$  and thus $\Omega\in VN(G)\bar\otimes VN(G).$ By~\eqref{weight}, it follows that $\ker\,\Omega\subset\ker\,(\omega\otimes\omega)=\{0\}$ and therefore $\Omega$ is  injective. Using ~\eqref{weight}, we get
$$
(\iota\otimes\Gamma)(\Gamma(\omega))(\iota\otimes \Gamma)(\Omega)(I\otimes \Omega)=\omega\otimes\omega\otimes\omega=(\Gamma\otimes\iota)(\Gamma(\omega))(\Gamma\otimes \iota)(\Omega)(\Omega\otimes I).
$$
Finally the co-associativity of $\Gamma$ and \eqref{kernel} imply  \eqref{2co}.
\end{proof}

\begin{rem*}
\begin{enumerate}[$(i)$]
\item If $\omega\in VN(G)$ satisfies (\ref{weight})
then it satisfies (\ref{iw}): $$\omega\omega^*\otimes\omega\omega^*=\Gamma(\omega)\Omega\Omega^*\Gamma(\omega)^*\leq\Gamma(\omega\omega^*).$$
Therefore a weight inverse could be also defined as $\omega\in VN(G)$ satisfying (\ref{kernel}) and (\ref{weight}) instead.
 \item
    It follows from \eqref{iw} that
$$||\omega ||^{4}=||\omega\omega^{*}\otimes\omega\omega^{*}||\leq ||\Gamma(\omega\omega^{*})||=||\omega||^{2},$$ so that $||\omega||^{2}\leq 1$ and hence a weight inverse is always a contraction.
\item In \cite{gllst} a (bounded below) weight on the dual of $G$ was defined as  an (unbounded) {\it  positive} operator $w$ which is affiliated with $VN(G)$ and admits an inverse $w^{-1}\in VN(G)$ such that $\Gamma(w)(w^{-1}\otimes w^{-1})$ is defined and contractive on a dense subspace, i.e. $w^{-1}$ is a positive weight inverse, in our terminology.
\item A weight inverse was considered in~\cite{ozrusp} as an element in the multiplier algebra $M(C_{r}^{*}(G))$ of $C_r^*(G)$ satisfying some additional density conditions.  If $G$ is compact, $M(C_r^*(G))=VN(G)$ and our definition coincides with the one in \cite{ozrusp}. 
\item The notion of unitary dual 2-cocycle on a compact group was introduced by Landstad \cite{landstad} and Wassermann \cite{wassermann} in the study of ergodic actions. In the context of quantum groups it was defined by Drinfeld \cite{drinfeld}. Their 2-cocycle condition is similar and defined as follows:
\begin{equation}\label{2co_dif}
(I\otimes \Omega)(\iota\otimes \Gamma)(\Omega)=(\Omega\otimes I)(\Gamma\otimes \iota)(\Omega).
\end{equation}
The cocycle of the form $(u\otimes u)\Gamma(u)^{-1}$ is called a coboundary. 
The inverse of our 2-cocycle satisfies (\ref{2co_dif}).  

\end{enumerate}
\end{rem*}

\begin{example}\rm 
Let  $w$ be a bounded below weight function on $G=\mathbb R$ or $\mathbb Z$ given by $w(x)=e^{i\gamma x}(1+|x|)^\alpha$ or $w(x)=e^{i\gamma x+\beta|x|}$, $\alpha,\beta >0$, $\gamma\in\mathbb R$. It is s easy to check that 
$$
\begin{array}{cccc}
|w(x+y)|\leq |w(x)||w(y)|, & \text{for all $x,y\in \mathbb{R},$}
\end{array}
$$
and moreover that $w^{-1}(x)$ is bounded. As $VN(\mathbb R)\simeq L^\infty(\mathbb R)$ and $VN(\mathbb T)\simeq\ell^\infty(\mathbb Z)$ via the Fourier transform, the image $\omega$ of $w^{-1}$ is a weight inverse on the dual of $G$. 

The above  weight inverses can be extended to $VN(\mathbb R^k\times\mathbb T^{n-k})$ by tensoring: $\omega=\omega_1\otimes\ldots\otimes \omega_n$. 

If a group $G$ contains a closed subgroup $H$ isomorphic to $\mathbb R^k\times\mathbb T^{n-k}$ then any weight inverse $\omega$ on the dual of $\mathbb R^k\times\mathbb T^{n-k}$ can be lifted to $VN(G)$ by considering $\omega_G=\iota_H(\omega)$, where $\iota_H: VN(H)\to VN(G)$ is the injective homomorphism $\lambda_H(s)\mapsto\lambda_G(s)$, here $\lambda_G$ and $\lambda_H$ are the left regular representations of $G$ and $H$ respectively; the existence of $\iota_H$ is due to Herz restriction theorem, see for example \cite{herz}.
\end{example}

For other examples  of weights  and weight inverses, we refer the reader to \cite{gllst}.

\medskip

Let $A(G)$ be the unique pre-dual of $VN(G)$. Recall that it can be identified with the space of functions on $G$:  $$A(G)=\{g\ast\check h\;|\; g,h\in L^2(G)\}\subset C_0(G),$$ where $\check h(s)=h(s^{-1})$, $s\in G$, and 
$$g\ast\check h=\int g(t)h(s^{-1}t)dt=\langle\lambda(s)h,\bar g\rangle;$$
$A(G)$ becomes a 
 commutative Banach algebra, usually called the Fourier algebra of $G$, with respect to the pointwise multiplication and the norm given by
$$\|f\|_{A(G)}=\inf\|g\|_2\|h\|_2,$$ where the infimum is taken over all possible decomposition $f=g\ast\check h$, see for example  \cite{Eym, kaniuth-lau}. The duality between $VN(G)$ and $A(G)$ is given by
$$( T,u )=\langle T \xi,\eta\rangle$$ for $T\in VN(G)$ and $u(s)=\langle\lambda(s)\xi,\eta\rangle=(\bar \eta\ast\check \xi)(s)\in A(G);$ here and through the rest of the paper we use $\langle\cdot,\cdot\rangle$ to denote the inner product on a Hilbert space and we keep notation $(\cdot,\cdot)$ for duality pairing between $\mathcal M$ and $\mathcal M_*$ when $\mathcal M$ is a von Neumann algebra.  

For $T\in VN(G)$ and $f\in A(G),$ we let $Tf\in A(G)$ be given by
$$
\begin{array}{ccc}
( R,Tf):=( RT,f), &\text{for $R\in VN(G).$}
\end{array}
$$
The assignment $T,f\mapsto Tf$ turns $A(G)$ into a left $VN(G)$-module.

If $\omega$ is a weight inverse, we define 
$$A(G,\omega):=\omega A(G)=\{\omega f \;|\; f\in A(G)\}\subset A(G)$$
and call it the \textit{Beurling-Fourier algebra} of $G$ associated to $\omega$.  
\begin{prop}
$A(G,\omega)$ is a Banach algebra with respect to the pointwise multiplication and the norm 
$$||\omega f||_{\omega}:=||f||_{A(G)}.$$
Moreover, $A(G,\omega)$ is a predual of $VN(G)$ 
with the pairing given by
\begin{equation}\label{duality}(\cdot,\cdot)_{\omega}:VN( G)\times A( G,\omega)\to \mathbb{C},\end{equation}
 $$( T,\omega f)_{\omega}=( T,f).$$
\end{prop}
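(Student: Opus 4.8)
The plan is to establish three things: that $\|\cdot\|_\omega$ is a well-defined norm making $A(G,\omega)$ a Banach space, that pointwise multiplication is bounded for this norm so $A(G,\omega)$ is a Banach algebra, and that the stated pairing realises $VN(G)$ as the dual (indeed, that $A(G,\omega)$ is a predual). First I would verify that the map $f\mapsto \omega f$ from $A(G)$ to $A(G,\omega)$ is a bijection. Surjectivity is immediate from the definition $A(G,\omega)=\omega A(G)$. For injectivity, suppose $\omega f=0$ as an element of $A(G)$; using the module action this means $(R\omega,f)=(R,\omega f)=0$ for all $R\in VN(G)$. Since $\omega$ has trivial kernels by \eqref{kernel}, the set $\{R\omega : R\in VN(G)\}$ should be weak$^*$-dense in $VN(G)$ (right multiplication by an injective operator with dense range yields a weak$^*$-dense ideal), forcing $f=0$. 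Granting this bijection, $\|\omega f\|_\omega:=\|f\|_{A(G)}$ is unambiguous and is a norm; completeness of $(A(G,\omega),\|\cdot\|_\omega)$ follows because the map $f\mapsto\omega f$ is by definition an isometry from the Banach space $(A(G),\|\cdot\|_{A(G)})$ onto $A(G,\omega)$, transporting completeness.

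Next I would set up the duality. The natural candidate pairing is $(T,\omega f)_\omega:=(T,f)$, and I must check this is well-defined, i.e. independent of the representation of an element of $A(G,\omega)$; but by the injectivity above each element has a unique preimage $f$, so well-definedness is automatic. By construction the linear isometry $J:A(G)\to A(G,\omega)$, $Jf=\omega f$, intertwines the two pairings: $(T,Jf)_\omega=(T,f)$. Since $A(G)$ is the predual of $VN(G)$, every weak$^*$-continuous functional on $VN(G)$ is given by pairing against some $f\in A(G)$, hence against $Jf\in A(G,\omega)$; and the norm of $T\in VN(G)$ computed via the $A(G,\omega)$-pairing equals its norm via the $A(G)$-pairing because $J$ is an isometry. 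Concretely, for $T\in VN(G)$,
\[
\sup_{\|Jf\|_\omega\le 1}|(T,Jf)_\omega|=\sup_{\|f\|_{A(G)}\le 1}|(T,f)|=\|T\|_{VN(G)}.
\]
This shows $VN(G)=A(G,\omega)^*$ isometrically via this pairing, so $A(G,\omega)$ is a predual of $VN(G)$.

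It remains to see that pointwise multiplication makes $A(G,\omega)$ a Banach algebra, and this is the step I expect to be the genuine obstacle, since it is here that the weight condition \eqref{iw} (equivalently \eqref{weight}) must do real work. The idea is that for $f,g\in A(G)$ the product $(\omega f)(\omega g)$ in $A(G)$ should again lie in $\omega A(G)$, with controlled norm. I would use the dual characterisation: pointwise multiplication in $A(G)$ is dual to the comultiplication $\Gamma$ on $VN(G)$, so testing $(\omega f)(\omega g)$ against $T\in VN(G)$ amounts to evaluating $\Gamma(T)$ against $\omega f\otimes \omega g$. Writing $\omega f\otimes\omega g=(\omega\otimes\omega)(f\otimes g)$ and invoking the factorisation $\omega\otimes\omega=\Gamma(\omega)\Omega$ from Lemma~\ref{lem1}, the relevant quantity becomes an evaluation involving $\Gamma(\omega)$, which lets one recognise the product as $\omega\cdot h$ for a suitable $h\in A(G)$ built from $f,g$ and the contractive cocycle $\Omega$. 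The contractivity $\|\Omega\|\le1$ should then yield the submultiplicative estimate $\|(\omega f)(\omega g)\|_\omega\le \|f\|_{A(G)}\|g\|_{A(G)}=\|\omega f\|_\omega\|\omega g\|_\omega$. The delicate points are verifying that this transported product is precisely the pointwise product inherited from $A(G)$ and tracking the $\Omega$-twist cleanly; this is the calculation that genuinely uses the weight-inverse hypothesis, whereas the Banach-space and duality assertions use only the injectivity of $\omega$.
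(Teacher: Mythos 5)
Your proposal is correct and follows the paper's proof in all essentials: the same three-part structure (well-definedness of the norm, transport of completeness through the isometric bijection $f\mapsto\omega f$, and the identity $(\omega f)(\omega g)=\omega\,\Gamma_*(\Omega(f\otimes g))$ obtained by dualising pointwise multiplication against $\Gamma$ and substituting $\omega\otimes\omega=\Gamma(\omega)\Omega$, with submultiplicativity coming from $\|\Omega\|\le 1$ and the complete contractivity of $\Gamma_*$). The one genuinely different ingredient is your injectivity argument: the paper shows $\omega f=0\Rightarrow f=0$ by hand, writing $f=\bar\eta\ast\check\xi$, projecting onto the $VN(G)$-invariant subspace generated by $\eta$ (whose projection lies in the commutant, hence commutes with $\omega$) and invoking $\ker\omega=\{0\}$; you instead observe that $(R,\omega f)=(R\omega,f)$ and that the left ideal $VN(G)\omega$ is weak$^*$-dense because the right support of $\omega$ is $I$ when $\ker\omega=\{0\}$. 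Both are valid and rest on the same hypothesis; yours is arguably cleaner and makes transparent exactly which half of condition \eqref{kernel} is used. The only thing I would press you on is to actually carry out the multiplication computation you defer to — it is a three-line chain of equalities $(T,(\omega f)(\omega g))=(\Gamma(T)(\omega\otimes\omega),f\otimes g)=(\Gamma(T\omega)\Omega,f\otimes g)=(T,\omega\,\Gamma_*(\Omega(f\otimes g)))$ — since as written that step is an announced plan rather than a proof, but the plan is exactly the paper's and it goes through without surprises.
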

\begin{proof}
 To see that 
$||\cdot ||_{\omega}$  is a norm,
we should only see  that it is well defined. 
In fact, if $f=\bar\eta\ast\check\xi$ then 
$$
\begin{array}{ccc}
(\lambda(s),\omega f)=\langle\lambda(s)\omega\xi, \eta\rangle=\langle\omega\xi,\lambda(s^{-1})\eta\rangle, &\text{for $s\in G.$}
\end{array}
$$
Let ${\mathcal U}=\overline{[\lambda(s)\eta\;|\;s\in G]}$, the closed linear span of $ \lambda(s)\eta$ , $s\in G$, and let $P$ be the projection onto ${\mathcal U}$. As ${\mathcal U}$  is invariant with respect to $VN(G)$, we have $P\omega=\omega P$.  Assuming now that  $\omega f=0$, we obtain  $\langle\omega\xi,\lambda(s^{-1})\eta\rangle=0$ for any  $s\in G,$  and hence $\omega P\xi=P\omega\xi=0$. By \eqref{kernel}, $P\xi=0$ and hence $f(s)=\langle\xi,\lambda(s^{-1})\eta\rangle=0$ for any  $s\in G$.
From~\eqref{weight} it follows that $A(G, \omega)$ is a commutative Banach algebra;  in fact, we have
$$
\begin{array}{cccc}
(\omega u)(\omega v)=\omega(\Gamma_*(\Omega (u\otimes v))),& \text{for $ u, v\in A(G),$}
\end{array}
$$   and \begin{equation}\label{product}\|(\omega u)(\omega v)\|_{\omega}= \|\Gamma_*(\Omega(u\otimes v)))\|_{A(G)}\leq \|u\|_{A(G)}\|v\|_{A(G)}=\|\omega u\|_{\omega}\|\omega v\|_{\omega}, 
\end{equation}
where $\Gamma_*: A(G)\widehat\otimes A(G)\to A(G)$ is the predual of the co-multiplication $\Gamma$ defined on the operator space projective product of $A(G)\widehat\otimes A(G)$ (see \cite{effros-ruan}). The associativity of the product is clear  and the completeness follows from the boundedness of  $\omega$: $\{\omega f_{n}\}$ is Cauchy in $A(G,\omega)$ if and only if $f_{n}$ is Cauchy in $A(G),$ so that $f_{n}\to f$ for some $f\in A(G)$ and hence also $\omega f_{n}\to \omega f.$
That $A(G,\omega)$ is a  predual of $VN(G)$  is obvious. 
\end{proof}

We note that  the previous proposition was proved in \cite{gllst} for positive weight inverses. Similar arguments can be applied to prove the general case. For the reader's convenience,  we have chosen to give its full proof.

The next statement shows that we can restrict ourselves to positive weight inverses.
\begin{prop}\label{propo2}
If $\omega$ is a weight inverse and  $\omega^{*}=U|\omega^{*}|$ is the polar decomposition of $\omega^*$, then $|\omega^{*}|$ is a weight inverse and the identity map $\omega u\mapsto \omega u$, $u\in A(G)$, defines an isometric isomorphism $A(G,\omega)\to A(G,|\omega^{*}|)$.
\end{prop}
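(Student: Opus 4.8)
The plan is to build everything on the single identity $|\omega^{*}|=(\omega\omega^{*})^{1/2}$, so that $|\omega^{*}|\,|\omega^{*}|^{*}=|\omega^{*}|^{2}=\omega\omega^{*}$. First I would verify that $|\omega^{*}|$ is again a weight inverse. For the kernel condition \eqref{kernel}, if $|\omega^{*}|x=0$ then $\omega\omega^{*}x=|\omega^{*}|^{2}x=0$, whence $\|\omega^{*}x\|^{2}=\langle\omega\omega^{*}x,x\rangle=0$ and $x=0$ by \eqref{kernel} for $\omega$; since $|\omega^{*}|$ is positive, $\ker|\omega^{*}|=\ker|\omega^{*}|^{*}=\{0\}$. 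For the defining inequality, the required relation $|\omega^{*}|\,|\omega^{*}|^{*}\otimes|\omega^{*}|\,|\omega^{*}|^{*}\leq\Gamma(|\omega^{*}|\,|\omega^{*}|^{*})$ is, after substituting $|\omega^{*}|\,|\omega^{*}|^{*}=\omega\omega^{*}$, literally \eqref{iw} for $\omega$. Hence $|\omega^{*}|$ is a (positive) weight inverse and $A(G,|\omega^{*}|)$ is defined.

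Next I would set up the map via the polar decomposition, writing $\omega=(\omega^{*})^{*}=(U|\omega^{*}|)^{*}=|\omega^{*}|U^{*}$, with $U,|\omega^{*}|\in VN(G)$. The crucial preliminary observation is that $U$ is in fact \emph{unitary}, not merely a partial isometry: its initial projection is the range projection of $|\omega^{*}|$, which is $I$ because $\ker|\omega^{*}|=\{0\}$, and its final projection is the range projection of $\omega^{*}$, which is $I$ because $\overline{\mathrm{Ran}(\omega^{*})}=(\ker\omega)^{\perp}=\{0\}^{\perp}$. Thus $U^{*}U=UU^{*}=I$. Using the left $VN(G)$-module structure on $A(G)$ and its associativity $(T_{1}T_{2})f=T_{1}(T_{2}f)$, I then obtain $\omega u=|\omega^{*}|(U^{*}u)$ for every $u\in A(G)$, and dually $|\omega^{*}|v=\omega(Uv)$. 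These two inclusions show that $A(G,\omega)$ and $A(G,|\omega^{*}|)$ coincide as subspaces of $A(G)$, so that the identity map $\omega u\mapsto\omega u$ of the statement is genuinely well defined and bijective (injectivity and well-definedness of the description $\omega u=|\omega^{*}|(U^{*}u)$ being guaranteed by the already-proven well-definedness of the weighted norm).

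It remains to check the map is isometric, and this is the single point where unitarity of $U$ is used in an essential way. On the concrete model $u(s)=\langle\lambda(s)\xi,\eta\rangle$ the module action is $(U^{*}u)(s)=\langle\lambda(s)U^{*}\xi,\eta\rangle$, and since $\|U^{*}\xi\|_{2}=\|\xi\|_{2}$ the admissible representations of $u$ and of $U^{*}u$ correspond bijectively with identical products $\|\xi\|_{2}\|\eta\|_{2}$; hence $\|U^{*}u\|_{A(G)}=\|u\|_{A(G)}$ (equivalently, right multiplication by the unitary $U$ preserves the unit ball of $VN(G)$, so the predual map is isometric). Therefore $\|\omega u\|_{|\omega^{*}|}=\|U^{*}u\|_{A(G)}=\|u\|_{A(G)}=\|\omega u\|_{\omega}$, which is exactly the desired isometry. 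The only delicate step is the upgrade from ``$U$ is a partial isometry'' to ``$U$ is unitary'': without condition \eqref{kernel} on both $\omega$ and $\omega^{*}$, $U^{*}$ would only be a partial isometry and the norm comparison would collapse to an inequality rather than an equality; once unitarity is secured, every other step is a direct computation.
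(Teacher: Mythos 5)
Your proposal is correct and follows essentially the same route as the paper: pass to the polar decomposition, use the kernel conditions \eqref{kernel} to see that $U$ is unitary, check that $|\omega^{*}|$ is a weight inverse, and compute $\|\omega u\|_{|\omega^{*}|}=\|U^{*}u\|_{A(G)}=\|u\|_{A(G)}=\|\omega u\|_{\omega}$. The only (harmless) difference is that you verify the weight-inverse property of $|\omega^{*}|$ directly from \eqref{iw} via $|\omega^{*}|^{2}=\omega\omega^{*}$, where the paper cites \eqref{weight}, and you spell out details the paper labels as immediate.
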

\begin{proof}
Note that $U\in VN(G)$ is unitary by~\eqref{kernel}. From~\eqref{weight} it is immediate that $|\omega^{*}|$ is again a weight inverse. Clearly $A(G,\omega)=A(G,|\omega^{*}|)$ as subsets of $A(G),$ and the identity is an algebra homomorphism. Moreover
$$
\| \omega u\|_{|\omega^{*}|}=\| |\omega^{*}|(U^{*} u)\|_{|\omega^{*}|}=\|U^{*}u \|_{A(G)}=\|u \|_{A(G)}=\|\omega u \|_{\omega}.
$$
\end{proof}
We will use the following lemma:
\begin{lem}\label{complem}
If $\mathcal M\subseteq B(\mathcal{H})$ is a von Neumann algebra and  $a_{1},a_{2}\in \mathcal M$ satisfy
$$
a_{1}\mathcal M_{*}\subseteq a_{2}\mathcal M_{*},
$$
then there is $c\in\mathcal  M$ such that $a_{1}=a_{2}c.$ Moreover, we can assume that $\ker \,c=\ker\,a_{1}$, and $\ker\, a_{2}\subseteq \ker \,c^{*}$, and under these assumptions, $c$ is uniquely determined. 
\end{lem}
\begin{proof}
Let $a_i=S_{i}|a_{i}|,$ for $i=1,2$, be the polar decompositions, and let $P_{i}=S_{i}^{*}S_{i},$ so that $a_{i}P_{i}=a_{i}.$ For $i=1,2,$ the maps $P_{i}\mathcal M_{*}\to a_{i}\mathcal M_{*},$ defined as $f\mapsto a_{i}f$, are bijective linear maps. As $a_{1}\mathcal M_{*}\subseteq a_{2}\mathcal M_{*},$ there is for every $f\in P_{1} \mathcal M_{*}$ a unique $h(f)\in P_{2}\mathcal M_{*}$ such that $a_{1}f=a_{2}h(f)$. Let $R(f)=h(f)$. Clearly $R$ is  a linear injective map and moreover for $b\in \mathcal M,$ we have $R(fb)=R(f)b.$ Note that $P_{i}\mathcal M_{*}$, $i=1,2$, are closed subspaces of $\mathcal M_*$, thus Banach spaces. We claim that $R$ is closed: let $f_{n}$ be a sequence such that $f_{n}\to f$ and $R(f_{n})\to h$ as $n\to \infty.$ Then 
$$a_1f=\lim_{n\to\infty}a_1f_{n}=\lim_{n\to \infty}a_2R(f_{n})=a_2h,$$
so that $R(f)=h.$ As $R$ is defined on the whole  $P_{1}\mathcal M_{*},$ it is thus bounded.
Extend $R$ to all of $\mathcal M_{*}\cong(I-P_{1})\mathcal M_{*}\oplus P_{1}\mathcal M_{*}$ by the formula $R(f)=R(P_{1}f)$. Clearly, for this extension we still have $a_{2}R(f)=a_{1}f$, as well as $R(fb)=R(f)b$ for all $b\in \mathcal M.$ Let $R':\mathcal M\to \mathcal M$ be the dual of $R$. Then as 
$$
(mR'(b),f )=(R'(b),f m)=(b,R(fm))=(b,R(f)m)
=(mb,R(f))=(R'(mb),f)
$$
for all $b,m\in \mathcal M$ and $f\in \mathcal M_{*}$,
it follows $R'(mb)=mR'(b).$ Thus with $c=R'(I)\in \mathcal M,$ we have $R'(b)=bc.$ We get
$$
\begin{array}{ccc}
(b,R(f))=(R'(b),f)=(bc,f)=(b,cf), & \text{for all $b\in \mathcal M$ and $f\in \mathcal M_{*}$,}
\end{array}
$$
so that $R(f)=cf.$ It gives $a_{1}f=a_{2}R(f)=a_{2}c f$, and thus $a_{1}=a_{2}c.$ Clearly, $\ker \,c=\ker\, a_{1}$, $\ker\, c^{*}\supseteq \ker\, I-P_{2}=(\ker\, a_{2})^{\bot}$ and that $c$ is the unique element such that $a_{1}=a_{2}c$ with these properties.  
\end{proof}

 \begin{prop}\label{2weights}
Let $\omega_{1} ,\omega_{2}$ be two weight inverses on the dual of $G$. The inclusion $A(G,\omega_{1})\subseteq A(G,\omega_{2})$  implies that there  is $a\in VN(G)$ such that $\omega_{1}=\omega_{2}a$. Furthermore, we have $A(G,\omega_{1})= A(G,\omega_{2})$ if and only if $\omega_{1}=\omega_{2}a$ for an invertible element $a\in VN(G)$.
\end{prop}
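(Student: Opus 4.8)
The plan is to read the inclusion $A(G,\omega_{1})\subseteq A(G,\omega_{2})$ as a factorization statement on the predual and then exploit the $VN(G)$-bimodule structure of $A(G)$, so that the factor is forced to lie in $VN(G)$ rather than merely in $B(L^{2}(G))$. Write $L_{\omega}$ for left multiplication $f\mapsto\omega f$ on $A(G)$ and $R_{\omega}$ for right multiplication $T\mapsto T\omega$ on $VN(G)$; from $(R,Tf)=(RT,f)$ one reads off that $L_{\omega}$ is bounded (with $\|Tf\|_{A(G)}\le\|T\|\,\|f\|_{A(G)}$) and that $(L_{\omega})^{*}=R_{\omega}$. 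The injectivity of $f\mapsto\omega_{2}f$ on $A(G)$ is exactly the well-definedness of $\|\cdot\|_{\omega_{2}}$ established above, resting on $\ker\omega_{2}=\{0\}$. Granting the inclusion, each $\omega_{1}f$ equals $\omega_{2}g$ for a unique $g$, so there is a well-defined linear $\Phi:A(G)\to A(G)$ with $\omega_{1}f=\omega_{2}\Phi(f)$, i.e. $L_{\omega_{1}}=L_{\omega_{2}}\Phi$. A one-line closed-graph argument makes $\Phi$ bounded: if $f_{n}\to f$ and $\Phi f_{n}\to h$, then $\omega_{2}\Phi f_{n}=\omega_{1}f_{n}\to\omega_{1}f$ and $\omega_{2}\Phi f_{n}\to\omega_{2}h$, whence $\omega_{2}h=\omega_{1}f$ and $h=\Phi f$ by injectivity.

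The key step is that $\Phi$ is a module map for the natural right action of $VN(G)$ on $A(G)$, defined by $(R,f\cdot S)=(SR,f)$. This action is associative with the left action, $(\omega f)\cdot S=\omega(f\cdot S)$ (both pair as $(R,\cdot)\mapsto(SR\omega,f)$), so $\omega_{2}\,\Phi(f\cdot S)=\omega_{1}(f\cdot S)=(\omega_{1}f)\cdot S=(\omega_{2}\Phi f)\cdot S=\omega_{2}\bigl(\Phi(f)\cdot S\bigr)$, and injectivity of $\omega_{2}$ forces $\Phi(f\cdot S)=\Phi(f)\cdot S$. Since the adjoint of $f\mapsto f\cdot S$ is $L_{S}$, dualizing shows $\Phi^{*}:VN(G)\to VN(G)$ commutes with left multiplication, $\Phi^{*}(ST)=S\,\Phi^{*}(T)$; putting $T=I$ gives $\Phi^{*}=R_{a}$ with $a:=\Phi^{*}(I)\in VN(G)$. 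Taking adjoints in $L_{\omega_{1}}=L_{\omega_{2}}\Phi$ yields $R_{\omega_{1}}=\Phi^{*}R_{\omega_{2}}$, that is $T\omega_{1}=(T\omega_{2})a$ for all $T$; with $T=I$ this is precisely $\omega_{1}=\omega_{2}a$, proving the first assertion.

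For the equivalence, the backward direction is easy: if $\omega_{1}=\omega_{2}a$ with $a$ invertible in $VN(G)$, then $L_{a}$ is a bijection of $A(G)$ (its inverse is $L_{a^{-1}}$, since $L_{a}L_{b}=L_{ab}$), hence $A(G,\omega_{1})=\omega_{2}\bigl(aA(G)\bigr)=\omega_{2}A(G)=A(G,\omega_{2})$. For the forward direction, apply the first part to both inclusions to obtain $a,b\in VN(G)$ with $\omega_{1}=\omega_{2}a$ and $\omega_{2}=\omega_{1}b$; substituting gives $\omega_{1}=\omega_{1}(ba)$ and $\omega_{2}=\omega_{2}(ab)$, so $\omega_{1}(I-ba)=0=\omega_{2}(I-ab)$, and $\ker\omega_{1}=\ker\omega_{2}=\{0\}$ force $ba=ab=I$, so $a$ is invertible in $VN(G)$.

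I expect the genuine difficulty to be concentrated in the module-map step, since that is what guarantees $a\in VN(G)$ rather than only $a\in B(L^{2}(G))$. One could instead reduce to positive $\omega_{i}$ via Proposition~\ref{propo2} and apply Douglas' factorization lemma on $L^{2}(G)$ to produce a bounded $a$ with $\omega_{1}=\omega_{2}a$, but one would then still need a commutant argument (the reduced Douglas solution commutes with $VN(G)'$) to land in $VN(G)$; the bimodule argument above supplies this automatically and, incidentally, never uses the cocycle relation, only that $\omega_{1},\omega_{2}$ are injective contractions in $VN(G)$.
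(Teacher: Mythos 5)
Your proof is correct and follows essentially the same route as the paper's: well-definedness of the factor map from $\ker\omega_2=\{0\}$, boundedness via the closed graph theorem, the right-module property $\Phi(f\cdot S)=\Phi(f)\cdot S$ dualizing to $\Phi^*=R_a$ with $a=\Phi^*(I)\in VN(G)$, and the two-sided application plus injectivity of the $\omega_i$ to get $ab=ba=I$. The only difference is cosmetic (you phrase the final step via $(L_\omega)^*=R_\omega$ rather than deriving $R(u)=au$ directly), so there is nothing to add.
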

\begin{proof}
It follows from $\lemma$~\ref{complem} that if $A(G,\omega_{1})\subseteq A(G,\omega_{2})$, then there is an $a\in VN(G)$ such that $\omega_{1}=\omega_{2}a.$ Moreover, if actually $A(G,\omega_{1})= A(G,\omega_{2}),$ then we get $a,b\in VN(G)$ such that $\omega_{1}=\omega_{2}a$ and $\omega_{2}=\omega_{1}b$. It then follows that $\omega_{1}(I-ba)=0$ and $\omega_{2}(I-ab)=0$ and as $\ker\,\omega_{i}=\{0\}$ for $i=1,2,$ we get $ba=ab=I$, so that $a$ is invertible. 
\end{proof}

Another equivalent model of the Beurling-Fourier algebra, which was given in \cite{gllst} for positive weights, is defined as follows. For a weight inverse $\omega$ and the corresponding 2-cocycle $\Omega$ define a new multiplication on $A(G)$ by 
\begin{equation}\label{omegaproduct}
\begin{array}{ccc}
u\cdot_{\Omega} v=\Gamma_*(\Omega(u\otimes v)),&\text{for $u,v\in A(G).$}
\end{array}
\end{equation}
It follows from (\ref{product}) that $(A(G),\cdot_\Omega)$ is a commutative contractive Banach algebra which is isomorphic to $A(G,\omega)$, showing that $A(G,\omega)$ can be  determined by the 2-cocycle $\Omega$ rather than the weight inverse $\omega$.  
Assume  $A(G,\omega_1)=A(G,\omega_2)$  and let $a\in VN(G)$ be the invertible operator such that $\omega_1=\omega_2a$ which exists due to Proposition \ref{2weights}. If $\Omega_1$ and $\Omega_2$ are  the corresponding 2-cocycles, then 
\begin{equation}\label{cocycle}
\Gamma(a)\Omega_1=\Omega_2(a\otimes a)
\end{equation}
and $(A(G),\cdot_{\Omega_1})\simeq (A(G),\cdot_{\Omega_2})$. The converse also holds: if $\Omega_1$, $\Omega_2\in VN(G\times G)$ are  2-cocycles  that satisfy (\ref{cocycle})  and  correspond to weight inverses $\omega_1$ and $\omega_2$ respectively, then $u\mapsto a u$, $u\in A(G)$, gives the isometric isomorphism $(A(G), \cdot_{\Omega_1})\simeq (A(G),\cdot_{\Omega_2})$.  To see this let $u$, $v\in A(G)$ and $x\in VN(G)$. Then 
\begin{eqnarray*}
(x, a(u\cdot_{\Omega_1}v))&=&(xa, u\cdot_{\Omega_1}v)=(\Gamma(xa),\Omega_1(u\otimes v))=(\Gamma(x)\Gamma(a)\Omega_1,u\otimes v)\\&=&(\Gamma(x)\Omega_2(a\otimes a),u\otimes v)=(\Gamma(x),\Omega_2(au\otimes av))=(x,au\cdot_{\Omega_2}av).
\end{eqnarray*}
If $a$ is not assumed to be invertible, the map $u\mapsto au$ gives a homomorphism from $(A(G), \cdot_{\Omega_1})$ to $ (A(G),\cdot_{\Omega_2})$.  We note that any $2$-cocycle associated with a weight inverse is symmetric, that is  invariant under the 'flip' automorphism $a\otimes b\mapsto b\otimes a$ of $VN(G)\bar \otimes VN(G)$. 
\\

\begin{rem}\rm 
Let $\mathcal{Z}^{2}_{sym}(\hat G)$ be a category whose objects are injective symmetric $2$-cocycles and elements  $a\in\mathrm{Hom}(\Omega_{1},\Omega_{2})$ are the non-zero operators in $ VN(G)$ satisfying~\eqref{cocycle}. With this notation, each weight inverse  belongs to the set $\mathrm{Hom}(\Omega,I),$ where the identity operator $I$ is considered as the identity $2$-cocycle: in this case \eqref{cocycle} becomes
$$
\Gamma(a)\Omega=a\otimes a.
$$
In particular, we have $\mathrm{Hom}(I,I)=\lambda(G)\simeq G$. Defining the equivalence relation on $\mathcal{Z}^{2}_{sym}(\hat G)$ by $\Omega_{1}\sim \Omega_{2}$ if there is an invertible element $a\in \mathrm{Hom}(\Omega_{1},\Omega_{2}),$ we have the embedding of the set of Beurling-Fourier algebras into $\mathcal{Z}^{2}_{sym}(\hat G)/\sim$.  

\end{rem}

We finish this section by defining a representation of $(A(G),\cdot_\Omega)$. 

Recall the fundamental unitary $W\in VN(G)\bar\otimes B(L^2(G))$ and let $f\in A(G)$,  $f(\cdot)=\langle \lambda(\cdot)\xi,\eta\rangle$. Then for  $x$, $y\in L^2(G)$ we have
\begin{eqnarray*}
&&\langle( f\otimes \iota)(W)x,y\rangle=\langle W(\xi\otimes x),\eta\otimes y\rangle=\int_{G\times G}\xi(ts)x(t)\overline{\eta(s)}\overline{y(t)}dtds=\\
&&=\int_G(\int_G\xi(ts)\overline{\eta(s)}ds)x(t)\overline{y(t)}dt=\int_G\langle\lambda(t)^{-1}\xi,\eta\rangle x(t)\overline{y(t)}dt=\langle M_{\check f}x,y\rangle,
\end{eqnarray*}
where $\check f(t)=f(t^{-1})$ and $M_{\check f}$ is the multiplication operator by $\check f$.

For $X\in B(L^2(G)\otimes L^2(G))$  write $X_{12}=X\otimes I$, $X_{23}=I\otimes X$ for operators on $L^2(G)\otimes L^2(G)\otimes L^2(G)$ and define $X_{13}$ similarly. Then $W$ satisfies the pentagonal relation 
\begin{equation}\label{pentagon}
W_{23}W_{12}=W_{12}W_{13}W_{23}.
\end{equation} 

For $f\in A(G)$ define $\lambda_\Omega(f)=(f\otimes\iota)(W\Omega)$.

\begin{lem}\label{lambdaOmega}
The map $f\mapsto \lambda_\Omega(f)$ is a representation of $(A(G), \cdot_\Omega)$ on $B(L^2(G))$, i.e. 
$$\lambda_\Omega(f\cdot_\Omega g)=\lambda_\Omega(f)\lambda_\Omega(g) \text{ for all }f, g\in A(G).$$
Moreover, 
\begin{equation}\label{omegalambda}
\omega\lambda_\Omega(f)=M_{\check{\omega f}}\omega, f\in A(G).
\end{equation}
\end{lem}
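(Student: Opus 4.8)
The plan is to reduce the multiplicativity of $\lambda_\Omega$ to a single operator identity on $L^2(G)^{\otimes 3}$ that turns out to be precisely the $2$-cocycle relation \eqref{2co}, and to obtain the intertwining relation \eqref{omegalambda} directly from the weight equation \eqref{weight}. Throughout I work with slice maps and the leg-numbering notation $X_{12},X_{13},X_{23}$ already fixed in the text, keeping careful track of which legs carry $VN(G)$ and which carries the output copy of $B(L^2(G))$.

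For the first assertion I first record the elementary ``product of slices'' formula: for $Y_1,Y_2\in VN(G)\bar\otimes B(L^2(G))$ and $f,g\in A(G)$,
\[
(f\otimes\iota)(Y_1)\,(g\otimes\iota)(Y_2)=(f\otimes g\otimes\iota)\big((Y_1)_{13}(Y_2)_{23}\big),
\]
so that $\lambda_\Omega(f)\lambda_\Omega(g)=(f\otimes g\otimes\iota)\big(W_{13}\Omega_{13}W_{23}\Omega_{23}\big)$. Next I rewrite the target $\lambda_\Omega(f\cdot_\Omega g)=\big((f\cdot_\Omega g)\otimes\iota\big)(W\Omega)$ by unfolding $f\cdot_\Omega g=\Gamma_*(\Omega(f\otimes g))$. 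Two adjunction identities do this: first, $((\Gamma_*\psi)\otimes\iota)(Y)=(\psi\otimes\iota)\big((\Gamma\otimes\iota)(Y)\big)$ for $\psi\in (VN(G)\bar\otimes VN(G))_*$, which is immediate from $(\Gamma(x),\psi)=(x,\Gamma_*\psi)$; and second, for the left action of $\Omega$ on the predual, $((\Omega\psi_0)\otimes\iota)(X)=(\psi_0\otimes\iota)\big(X(\Omega\otimes I)\big)$. Applying these with $\psi=\Omega(f\otimes g)$ and then $\psi_0=f\otimes g$, and using that $\Gamma\otimes\iota$ is a homomorphism together with $(\Gamma\otimes\iota)(W)=W_{13}W_{23}$ (a restatement of the pentagon relation \eqref{pentagon}), I arrive at
\[
\lambda_\Omega(f\cdot_\Omega g)=(f\otimes g\otimes\iota)\big(W_{13}W_{23}\,(\Gamma\otimes\iota)(\Omega)\,(\Omega\otimes I)\big).
\]

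Comparing the two expressions, and since the functionals $f\otimes g$ separate points, it suffices to prove the operator identity $W_{13}\Omega_{13}W_{23}\Omega_{23}=W_{13}W_{23}(\Gamma\otimes\iota)(\Omega)(\Omega\otimes I)$. Cancelling the invertible $W_{13}$ on the left reduces this to $\Omega_{13}W_{23}\Omega_{23}=W_{23}(\Gamma\otimes\iota)(\Omega)(\Omega\otimes I)$. Now I invoke the cocycle relation \eqref{2co} in the form $(\Gamma\otimes\iota)(\Omega)(\Omega\otimes I)=(\iota\otimes\Gamma)(\Omega)(I\otimes\Omega)$ together with $(\iota\otimes\Gamma)(\Omega)=W_{23}^*\Omega_{13}W_{23}$ (the analogue of $(\Gamma\otimes\iota)(W)=W_{13}W_{23}$, obtained by applying $\Gamma(x)=W^*(I\otimes x)W$ to the second leg); the right-hand side then collapses to $W_{23}W_{23}^*\Omega_{13}W_{23}\Omega_{23}=\Omega_{13}W_{23}\Omega_{23}$, which is exactly the left-hand side. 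This proves multiplicativity.

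For the moreover part I compute $\omega\lambda_\Omega(f)=(f\otimes\iota)\big((I\otimes\omega)W\Omega\big)$, pushing $\omega$ into the output leg. From $\Gamma(\omega)=W^*(I\otimes\omega)W$ I get $(I\otimes\omega)W=W\Gamma(\omega)$, whence the weight equation \eqref{weight} gives $(I\otimes\omega)W\Omega=W\Gamma(\omega)\Omega=W(\omega\otimes\omega)$. Factoring $\omega\otimes\omega=(\omega\otimes I)(I\otimes\omega)$ and letting the right-hand $\omega$ (acting on the output leg) factor out on the right, I am left with $(f\otimes\iota)(W(\omega\otimes I))\,\omega=(\omega f\otimes\iota)(W)\,\omega$, where I used the module action $(R,\omega f)=(R\omega,f)$. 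Finally the computation preceding the lemma gives $(\omega f\otimes\iota)(W)=M_{\check{\omega f}}$, yielding \eqref{omegalambda}. I expect the main obstacle to be purely bookkeeping: correctly placing $VN(G)$ versus $B(L^2(G))$ on each leg so that every slice and adjunction identity is legitimate, and recognising that the operator identity behind multiplicativity is nothing but \eqref{2co} rewritten via the two conjugation formulas for $\Gamma$.
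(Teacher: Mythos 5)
Your proposal is correct and follows essentially the same route as the paper's proof: the same adjunction identities to unfold $\Gamma_*(\Omega(f\otimes g))$, the pentagon relation in the form $(\Gamma\otimes\iota)(W)=W_{13}W_{23}$, the cocycle identity \eqref{2co} combined with $(\iota\otimes\Gamma)(\Omega)=W_{23}^*\Omega_{13}W_{23}$ to collapse to $W_{13}\Omega_{13}W_{23}\Omega_{23}$, and the same one-line computation from \eqref{weight} for the intertwining relation. The only difference is presentational: you isolate the operator identity before slicing, whereas the paper carries the vector functionals $\psi_{\xi,\eta}$ through the computation.
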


 \begin{proof}
Let  $f$, $g\in A(G)$ and $\xi$, $\eta\in L^2(G)$. Write $\psi_{\xi,\eta}$ for the vector functional given by $\psi_{\xi,\eta}(T)=\langle T\xi,\eta\rangle$, $T\in B(L^2(G))$. Then 
\begin{eqnarray*}
\langle\lambda_\Omega(f\cdot_\Omega g)\xi,\eta\rangle&=&(((f\cdot_\Omega g)\otimes\iota)(W\Omega),\psi_{\xi,\eta})=((\Gamma\otimes\iota)(W\Omega), \Omega(f\otimes g)\otimes\psi_{\xi,\eta})\\&=&((\Gamma\otimes\iota)(W)(\Gamma\otimes\iota)(\Omega)(\Omega\otimes I), f\otimes g\otimes\psi_{\xi,\eta})\\
&\stackrel{(\ref{2co})}{=}&((\Gamma\otimes\iota)(W)(\iota\otimes \Gamma)(\Omega)(I\otimes \Omega), f\otimes g\otimes\psi_{\xi,\eta})\\
&=&(W_{12}^*W_{23}W_{12}(\iota\otimes \Gamma)(\Omega)(I\otimes \Omega), f\otimes g\otimes\psi_{\xi,\eta})\\
&\stackrel{(\ref{pentagon})}{=}&(W_{13}W_{23}(\iota\otimes \Gamma)(\Omega)(I\otimes \Omega), f\otimes g\otimes\psi_{\xi,\eta})\\
&=&(W_{13}W_{23}W_{23}^*\Omega_{13}W_{23}\Omega_{23},  f\otimes g\otimes\psi_{\xi,\eta})\\
&=&(W_{13}\Omega_{13}W_{23}\Omega_{23}, f\otimes g\otimes\psi_{\xi,\eta})\\&=&\langle (f\otimes\iota)(W\Omega)(g\otimes\iota)(W\Omega),\xi,\eta\rangle
=\langle\lambda_\Omega(f)\lambda_\Omega(g)\xi,\eta\rangle,
\end{eqnarray*} 
where the first equality in the  last line can be seen on elementary tensors and using then linearity and density arguments. In fact, if $X=a\otimes b$, $Y=c\otimes d$, $f(s)=\langle \lambda(s)\xi_1,\eta_1\rangle$ and $g(s)=\langle\lambda(s)\xi_2,\eta_2\rangle$, then
\begin{eqnarray*}
&&(X_{13}Y_{23}, f\otimes g\otimes\psi_{\xi,\eta})=\langle (a\otimes I\otimes b)(I\otimes c\otimes d)\xi_1\otimes\xi_2\otimes\xi,\eta_1\otimes\eta_2\otimes\eta\rangle\\&&=\langle a\xi_1,\eta_1\rangle\langle c\xi_2,\eta_2\rangle\langle bd\xi,\eta\rangle=\langle (f\otimes\iota) (X)(g\otimes\iota)(Y)\xi,\eta\rangle.
\end{eqnarray*}
The formula (\ref{omegalambda}) follows from the following calculations
\begin{eqnarray*}
\omega\lambda_{\Omega}(f)&=&(f\otimes\iota)((I\otimes\omega)W\Omega)=(f\otimes\iota)(W\Gamma(\omega)\Omega)\\&=& (f\otimes\iota)(W\omega\otimes\omega)=(\omega f\otimes\iota)(W)\omega=M_{\check{\omega f}}\omega.
\end{eqnarray*}
 \end{proof}

\section{{\sc \textbf{Complexification of $ G$}}}
\subsection{{\sc \textbf{Preliminaries on Unbounded Operators}}}

We start with some basic material on unbounded operators which will be used in the paper. Our main reference is \cite{schmudgen}. 

Let $\h$ be a Hilbert space with the inner product $\langle\cdot,\cdot\rangle$.  Recall that a linear operator $T$ defined on a subspace $\mathcal {D}(T)\subset \h$, called a domain of $T$, is said to be closed if the graph of $T$, $\{(\xi, T\xi)\;|\; \xi\in\h \}$, is closed in $\h\oplus\h$. Given linear operators $T$ and  $S$, we write $T\subset S$ if $\mathcal D(T)\subset\mathcal D(S)$ and $S|_{\mathcal D(T)}=T$; we say that $S$ is an extension of $T$. We have $T=S$ if $T\subset S$ and $S\subset T$. A linear operator $T$ is called closable if  it has a closed extension. Clearly, $T$ is closable if and only if the conditions $(\xi_n)_n\in \mathcal D(T)$, $\eta\in\h$, $\|\xi_n\|\to 0$ and $\|T\xi_n-\eta\|\to 0$ imply $\eta=0$. The minimal closed extension of a closable $T$ exists and will be denoted by $\bar T$.  We say that a subspace $\mathcal{U}\subseteq \mathcal{D}(T)$ is a \textit{core} for $T$ if for any $\xi\in \mathcal{D}(T)$, there is a sequence $(\xi_{n})_n\subset \mathcal{U},$  such that $\xi_{n}\to \xi$ and $T \xi_{n}\to T \xi.$ Equivalently, the subspace $\{(\xi,\;T\xi)\;|\;\xi\in \mathcal{U}\}\subseteq \h\oplus \h$ is dense in the graph of $T.$

If $T$ is an operator with a dense domain it has a well-defined adjoint  operator $T^*:\mathcal D(T^*)\to\h$, which is always a closed operator. An operator $T$ is  called selfadjoint if $T=T^*$; a selfadjoint operator is positive if it has a nonnegative spectrum. $T$ is essentially selfadjoint if $\bar T$ is selfadjoint. 

Any selfadjoint $T$ has a spectral measure $E_T$ on the $\sigma$- algebra $\mathcal B(\mathbb R)$ of Borel subsets of $\mathbb R$, and 
$$ T=\int_{\spec T} tdE_T(t); $$
if  $f$ is a Borel measurable  function, we write $f(T)$ for the operator
$$
\begin{array}{ccc}
f(T)={\displaystyle \int_{\spec T}} f(t)\;dE_T(t),& \mathcal D(f(A))=\{\xi\in H\;|\; {\displaystyle \int_{\spec T} }|f(t)|^2 \;d(E(t)\xi,\xi)<\infty\}.
\end{array}
$$

If $T$ is a closed operator with dense domain, then $T^*T$ is positive and $T$ has the polar decomposition $T=U|T|$, where $|T|=(T^*T)^{1/2}$ and $U$ is a partial isometry; $|T|$, $T$ and $U$ have the identical initial projections. 
\\

  We say that a closed operator $T$ defined on a dense domain $\mathcal{D}(T)\subseteq \h$ is {\it affiliated} with a von Neumann algebra $\mathcal M$ of $ B(\h)$ if $UT\subset TU$ for any unitary operator $U\in\mathcal M'$, where $\mathcal M'$ as usually stands for the commutant of $\mathcal M$.  Note that if $T=U|T|$ is the polar decomposition of $T$ then $T$ is affiliated with $\mathcal M$ if and only if $U\in\mathcal M$ and $|T|$ is affiliated with $\mathcal M$, the latter is equivalent that the spectral projections $E_{|T|}(\Delta)$, $\Delta\in\mathcal B(\mathbb R)$,  of $|T|$ belong to $VN(G)$.
We denote by $\overline {VN(G)}$ the set of all affiliated with $VN(G)$ elements.  We write $\overline{VN(G)}^+$ for the set of positive operators in $\overline{VN(G)}$.
If $T=T^*$ is affiliated with $VN(G)$, i.e. $E_T (\Delta)\in VN(G)$ for any $\Delta\in \mathcal B(\mathbb R)$,  then $f(T)\in\overline{VN(G)}$ for any Borel function $f$ on $\mathbb R$. 
\\

Let $A$ be a linear operator on $\h$. A vector $\varphi$ in $\h$ is called analytic  for $A$ if $\varphi\in\mathcal D(A^n)$ for all $n\in\mathbb N$ and if there exists a constant $M$ (depending on $\varphi$) such that
$$
\begin{array}{ccc}
\|A^n\varphi\|\leq M^n n!&\text{ for all }n\in \mathbb N.
\end{array}
$$
We write $\mathcal D_\omega(A)$ for the set of all analytic vectors of $A$.
If $A$ is selfadjoint with $E_A(\cdot)$ being the spectral measure of $A$, then $E_A(\Delta)\varphi$ is analytic for $A$ for any $\varphi\in \h $ and any bounded $\Delta\in\mathfrak B(\mathbb R)$, as
$$\|A^n E_A(\Delta)\varphi\|\leq M^n\|\varphi\|,$$
if $\Delta\subset[-M,M]$.
\\

It is known (see e.g. \cite[\proposition ~10.3.4]{schmudgen}) that if $T$ is a symmetric operator, i.e. $T\subset T^*$,  with a dense set of  analytic vectors, then $T$ is essentially selfadjoint.
\\

If $\mathcal U$ and $\mathcal V$ are subspace of $\h$ we write $\mathcal U\odot\mathcal V$ for the algebraic tensor product of the subspaces; $\h_1\otimes\h_2$ is the usual Hillbertian tensor product of two Hilbert spaces $\h_1$ and $\h_2$.

If $T_1$, $T_2$  are closed densely defined operator with the domains $\mathcal D(T_1)$ and $\mathcal D(T_2)\subset \h$ respectively, then the operator $T_1\otimes T_2$ with domain $\mathcal D(T_1)\odot \mathcal D(T_2)$ is closable. In fact, if $\xi_n\to 0$, where $\xi_n\in \mathcal D(T_1)\odot \mathcal D(T_2)$,  and $(T_1\otimes T_2)\xi_n\to\eta$ then for any $f\in \mathcal D(T_1^*)\odot \mathcal D(T_2^*)$, we have $\langle(T_1\otimes T_2)\xi_n,f\rangle=\langle\xi_n,(T_1^*\otimes T_2^*)f\rangle\to 0$, giving $\langle\eta,f\rangle=0$. As each $T_i$ is closed, $\mathcal D(T_i^*) $ is dense in $\h$ and hence $\mathcal D(T_1^*)\odot\mathcal D(T_2^*)$ is dense in $\h\otimes \h$, showing that $\eta=0$ and that $T_1\otimes T_2$ is closable. Unless otherwise stated we will write $T_1\otimes T_2$ for the corresponding closure.
\\

We say that two selfadjoint operators $T_1$, $T_2$ strongly commute, if
$$
\begin{array}{cccc}
E_{T_1}(\Delta_1)E_{T_2}(\Delta_2)= E_{T_2}(\Delta_2)E_{T_1}(\Delta_1), & \text{for all $\Delta_1,\Delta_2\in\mathcal B(\mathbb R),$}
\end{array}
$$
where $E_{T_i}(\cdot)$ is the spectral measure of $T_i$.
We define a product spectral measure $E_{T_1}\times E_{T_2}: \mathcal B(\mathbb R^2)\to \mathcal B(\h)$ by letting
$E_{T_1}\times E_{T_2}(\Delta_1\times\Delta_2)=E_{T_1}(\Delta_1)E_{T_2}(\Delta_2)$ for Borel measurable rectangle $\Delta_1\times\Delta_2$.
If $f:\mathbb R^2\to  \mathbb C$  is Borel measurable we set
$$f(T_1,T_2)=\int_{\mathbb R^2}f(x_1,x_2)\; dE_{T_1}\times E_{T_2}(x_1,x_2).$$ It is a selfadjoint operator if $f$ is real-valued.
\\

Let $T_i$ be selfadjoint operators, $i=1,2$. Then $T_1\otimes 1$ and $1\otimes T_2$ are selfadjoint operators that commute strongly. Then  $T_1\otimes T_2$ is  selfadjoint and 
$$T_1\otimes T_2=f(T_1\otimes 1, 1\otimes T_2),$$
where $f(x_1,x_2)=x_1x_2$. 
Observe that $T_1\otimes T_2$ is essentially selfadjoint on $\mathcal D(T_1)\odot\mathcal D(T_2)$, as $\mathcal D_\omega(T_1)\odot\mathcal D_{\omega}(T_2)$ is a dense subset of $\h_1\otimes\h_2$ and consists of analytic vectors for $T_1\otimes T_2$.

For closed densely defined operators $S_1$, $S_2$ with polar decomposition
$S_i=U_i|S_i|$, $i=1,2$, we have $S_1\otimes S_2=(U_1\otimes U_2)(|S_1|\otimes|S_2|)$ is the polar decomposition of the closed operator $S_1\otimes S_2$.


\subsection{{\sc \textbf{$\lambda$-Complexification of a Locally Compact Group}}}

Let $G$ be a locally compact group and  let $W$ be the fundamental multiplicative unitary on $L^2(G\times G)$ implementing the coproduct $\Gamma$ on $VN(G)$. We can extend $\Gamma$ to $\overline{VN(G)}$ by defining 
$$
\begin{array}{cccc}
\Gamma(T)=W^*(1\otimes T)W,&\text{for $T\in \overline{VN(G)}.$}
\end{array}
$$ Clearly, the unbounded operator $\Gamma(T)$ is closed.
If $T^*=T$ and $E_T(\cdot)$ is the spectral measure of $T$, then both operators $1\otimes T$ and $\Gamma(T)$ are selfadjoint with $1\otimes E_T(\cdot)$ and $\Gamma\circ E_T(\cdot)$ being the corresponding spectral measures. In particular,
$$\Gamma(T)=\int_{\mathbb R} t\; d(\Gamma\circ E_T(t)).$$
If $T=U|T|$ is the polar decomposition of $T$, $\Gamma(T)=\Gamma(U)\Gamma(|T|)$ is the polar decomposition of $\Gamma(T)$.

\begin{defn}
By the $\lambda$-complexification $ G_{\mathbb C,\lambda}$ of $G$ we shall mean the set of all   non-zero (unbounded) operators $T\in\overline{VN(G)} $ such that
\begin{equation}\label{defrel}
  \Gamma(T)=T\otimes T.
\end{equation}
\end{defn}
We note that $ G_{\mathbb C,\lambda}\cap VN(G)=\{T\in VN(G)|\; \Gamma(T)=T\otimes T, T\ne 0\}=\lambda(G)$, see e.g. \cite[Chapter 11, \theorem~16]{takesaki} giving an embedding of $G$ into $G_{\mathbb C,\lambda}$. 

Let
\begin{equation}\label{liealgebra}
    \Lambda=\{\alpha\in\overline{VN(G)}\;|\; \alpha^*=-\alpha,\; \alpha\otimes 1+1\otimes\alpha=\Gamma(\alpha)\}.
\end{equation}
As for $\alpha\in \Lambda$, the operators $i\alpha\otimes 1$ and $1\otimes i\alpha$ are selfadjoint and strongly commute, the sum
$i\alpha\otimes 1+1\otimes i\alpha$,
defined via the functional calculus, gives a selfadjoint operator; in (\ref{liealgebra})
 we require it to be equal to the selfadjoint operator $\Gamma(i\alpha)$.
\\

If $\alpha\in\Lambda$, we will define $\exp{z\alpha}$, $z\in\mathbb C$, through functional calculus, i.e.
$$\exp{z\alpha}=\exp{(-iz(i\alpha))}=\int_{\mathbb R}\exp{(-izt)}\; d E_{i\alpha}(t).$$
\begin{prop}
For $\alpha\in\Lambda$ and $z\in\mathbb C$, $\exp{z\alpha}\in G_{\mathbb C,\lambda}$.
\end{prop}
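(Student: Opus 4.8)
The plan is to verify the defining relation $\Gamma(T) = T \otimes T$ for $T = \exp{z\alpha}$ by reducing everything to the functional-calculus machinery set up for strongly commuting self-adjoint operators. The key observation is that $\Gamma$ is a $*$-homomorphism that is moreover compatible with functional calculus on self-adjoint operators: if $A = i\alpha$ is self-adjoint and affiliated with $VN(G)$ with spectral measure $E_A$, then $\Gamma(A)$ is self-adjoint with spectral measure $\Gamma \circ E_A$, as recorded in the preamble, and hence $\Gamma(f(A)) = f(\Gamma(A))$ for any Borel function $f$. Applying this with $f(t) = \exp(-izt)$ gives immediately $\Gamma(\exp{z\alpha}) = \exp(z\,\Gamma(\alpha))$.

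First I would write $A = i\alpha$, so $A^* = A$ and, by the definition \eqref{liealgebra} of $\Lambda$, $\Gamma(A) = A \otimes 1 + 1 \otimes A$ (multiplying $\alpha \otimes 1 + 1 \otimes \alpha = \Gamma(\alpha)$ by $i$). Both $A \otimes 1$ and $1 \otimes A$ are self-adjoint and strongly commute, so their sum is the self-adjoint operator $f(A \otimes 1,\, 1 \otimes A)$ with $f(x_1,x_2) = x_1 + x_2$, defined through the product spectral measure as in the preliminaries. The goal relation $\Gamma(\exp{z\alpha}) = \exp{z\alpha} \otimes \exp{z\alpha}$ then becomes
\begin{equation*}
\exp\bigl(-iz(A \otimes 1 + 1 \otimes A)\bigr) = \exp(-izA) \otimes \exp(-izA).
\end{equation*}

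Next I would establish this factorization from the product-spectral-measure calculus. Since $A \otimes 1$ and $1 \otimes A$ strongly commute, I form $E_{A} \times E_{A}$ on $\mathcal B(\mathbb R^2)$ and compute, for $g(x_1,x_2) = \exp(-iz(x_1+x_2)) = \exp(-izx_1)\exp(-izx_2)$, that $g(A\otimes 1, 1\otimes A)$ factors as a product of the two commuting bounded operators $\exp(-izA)\otimes 1$ and $1 \otimes \exp(-izA)$; this is the statement that the exponential of a sum of strongly commuting self-adjoints is the product of their exponentials, which is a direct consequence of the multiplicativity of functional calculus against a product measure. That product is exactly $\exp(-izA) \otimes \exp(-izA) = \exp{z\alpha} \otimes \exp{z\alpha}$. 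Combining $\Gamma(\exp{z\alpha}) = \exp(z\,\Gamma(\alpha)) = \exp(-iz\,\Gamma(A))$ with this factorization closes the argument, and the operator is non-zero because $\exp(-izt)$ is nowhere vanishing, so $\exp{z\alpha}$ is injective, hence $T \in G_{\mathbb C,\lambda}$.

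The main obstacle I anticipate is purely one of care with unbounded operators and domains rather than a genuine difficulty: one must justify that $\Gamma$ genuinely commutes with Borel functional calculus on an unbounded self-adjoint $A$, i.e. that $\Gamma(f(A)) = f(\Gamma(A))$, which rests on $\Gamma \circ E_A$ being the spectral measure of $\Gamma(A)$ — a fact asserted in the preamble but which one should invoke explicitly. A secondary point requiring a line of justification is that $\exp(-izA)$ is genuinely bounded (so that no domain issues arise on the right-hand side), which holds precisely because $\lvert \exp(-izt)\rvert = \exp((\im z)\,t)$ need not be bounded when $A$ is unbounded and $\im z \neq 0$; strictly, $\exp{z\alpha}$ is then itself an unbounded affiliated operator, so the factorization and the verification of $\Gamma(T) = T \otimes T$ should be read as an identity of (possibly unbounded) closed operators, with the polar-decomposition remark from the preliminaries guaranteeing that $\exp{z\alpha} \otimes \exp{z\alpha}$ is the closed operator appearing on the right-hand side of \eqref{defrel}.
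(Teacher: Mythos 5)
Your proof is correct and follows essentially the same route as the paper: both arguments push the Borel functional calculus through $\Gamma$ (via $W^*(1\otimes E_{i\alpha}(\cdot))W$ being the spectral measure of $\Gamma(i\alpha)$) to get $\Gamma(\exp z\alpha)=\exp(z\Gamma(\alpha))$, and then factor the exponential of the strongly commuting sum $\alpha\otimes 1+1\otimes\alpha$ into $\exp(z\alpha)\otimes\exp(z\alpha)$. Your extra remarks on non-vanishing of $\exp(-izt)$ and on reading the identity as one of closed affiliated operators are sound and consistent with the paper's conventions.
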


\begin{proof}
It follows from the functional calculus and definition of $\Gamma$ that
\begin{eqnarray*}
\Gamma(\exp{z\alpha})&=&W^*(1\otimes \exp{z\alpha})W=W^*\int_{\mathbb R}\exp{(-izt)}\;dE_{1\otimes i\alpha}(t)W\\
&=& \int_{\mathbb R} \exp{(-izt)}\; dE_{W^*(1\otimes i\alpha)W}(t)=\exp{z\Gamma(\alpha)}\\
&=&\exp{z(\alpha\otimes 1+1\otimes\alpha)}\\
&=&\exp{(z\alpha\otimes 1)}\exp{(1\otimes z\alpha)}=(\exp{z\alpha}\otimes 1)(1\otimes\exp{z\alpha})\\
&=&\exp{z\alpha}\otimes\exp{z\alpha}.
\end{eqnarray*}
\end{proof}

\begin{prop}\label{bijection}
The map $\alpha\in\Lambda\mapsto \exp{i\alpha}$ is a bijection onto  $G_{\mathbb C,\lambda}\cap\overline{VN(G)}^+$.
\end{prop}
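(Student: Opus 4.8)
The plan is to produce an explicit inverse, namely $T\mapsto -i\log T$ via the Borel functional calculus, and to verify that the three required properties hold: that $\alpha\mapsto\exp(i\alpha)$ maps $\Lambda$ into $G_{\mathbb C,\lambda}\cap\overline{VN(G)}^+$, that it is injective, and that it is surjective. The first two are quick. For $\alpha\in\Lambda$ the operator $i\alpha$ is selfadjoint and affiliated with $VN(G)$, so $\exp(i\alpha)=\int_{\mathbb R}e^{t}\,dE_{i\alpha}(t)$ is a positive injective element of $\overline{VN(G)}^+$; it is non-zero, and the preceding proposition applied with $z=i$ shows $\Gamma(\exp(i\alpha))=\exp(i\alpha)\otimes\exp(i\alpha)$, so it lies in $G_{\mathbb C,\lambda}$. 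Injectivity of the map follows because $i\alpha$ is recovered from the positive operator $\exp(i\alpha)$ by applying the Borel function $\log$, so $\exp(i\alpha_1)=\exp(i\alpha_2)$ forces $i\alpha_1=i\alpha_2$.

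The real content is surjectivity, and here the first step is to show that any $T\in G_{\mathbb C,\lambda}\cap\overline{VN(G)}^+$ is injective, i.e. that its kernel projection $P=E_T(\{0\})$ vanishes. I would set $Q=1-P=E_T((0,\infty))\in VN(G)$, which is non-zero since $T\neq 0$. Because $\Gamma$ intertwines the Borel functional calculus (it is implemented by $W$ as in \eqref{gamma}, so $E_{\Gamma(T)}=\Gamma\circ E_T$), the identity $\Gamma(T)=T\otimes T$ gives $\Gamma(E_T(\{0\}))=E_{T\otimes T}(\{0\})$. Writing $T\otimes T=f(T\otimes 1,1\otimes T)$ with $f(x,y)=xy$ and using positivity of both factors, one identifies $E_{T\otimes T}(\{0\})=(P\otimes 1)\vee(1\otimes P)$; passing to complements by de Morgan yields $\Gamma(Q)=(Q\otimes 1)\wedge(1\otimes Q)=Q\otimes Q$.

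Now $Q$ is a non-zero projection in $VN(G)$ solving $\Gamma(Q)=Q\otimes Q$, so by \cite[Chapter 11, \theorem~16]{takesaki} it lies in $\lambda(G)$; the only projection among the unitaries $\lambda(s)$ is $\lambda(e)=1$ (a unitary that is also a projection equals the identity), whence $Q=1$ and $P=0$. With $T$ injective I would then set $\alpha=-i\log T$, so that $i\alpha=\log T$ is selfadjoint, affiliated with $VN(G)$, automatically $\alpha^*=-\alpha$, and $\exp(i\alpha)=T$. To see $\alpha\in\Lambda$ it remains to check $\Gamma(\log T)=\log T\otimes 1+1\otimes\log T$: applying $\log$ to $\Gamma(T)=T\otimes T$ and again using that $\Gamma$ commutes with functional calculus gives $\Gamma(\log T)=\log(T\otimes T)$, while the strongly-commuting calculus for $T\otimes 1$ and $1\otimes T$ together with $\log(xy)=\log x+\log y$ for $x,y>0$ gives $\log(T\otimes T)=\log T\otimes 1+1\otimes\log T$. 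This establishes $\alpha\in\Lambda$ with $\exp(i\alpha)=T$.

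I expect the main obstacle to be the injectivity claim: the careful identification $E_{T\otimes T}(\{0\})=(P\otimes 1)\vee(1\otimes P)$ inside the unbounded functional calculus, and the reduction to the \emph{bounded} idempotent $Q$ so that Takesaki's classification of the group-like elements of $VN(G)$ can be invoked. Once that is in place, the remaining identities are bookkeeping with the spectral theorem and the fact that $\Gamma$ commutes with Borel functional calculus.
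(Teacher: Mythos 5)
Your argument is correct, and at its core it takes the same inverse as the paper --- the Borel-functional-calculus logarithm --- but it differs in two respects that are worth recording. First, you prove explicitly that every $T\in G_{\mathbb C,\lambda}\cap\overline{VN(G)}^{+}$ has trivial kernel, by showing that the support projection $Q=E_{T}((0,\infty))$ satisfies $\Gamma(Q)=Q\otimes Q$ and then invoking the Takesaki classification of group-like elements of $VN(G)$ to force $Q=\lambda(e)=1$. The paper's proof passes over this point silently: it sets $A=\int_{\mathbb R^{+}}\ln t\,dE_{T}(t)$ and treats $t\mapsto T^{it}$ as a strongly continuous \emph{unitary} group, which already presupposes $E_{T}(\{0\})=0$; your projection argument is exactly the one the paper itself uses later in the proof of \lemma~\ref{trivcoker}, so you have supplied a step the authors left implicit. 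Second, to verify $\Gamma(\log T)=\log T\otimes 1+1\otimes\log T$ you apply $\log$ directly to $\Gamma(T)=T\otimes T$ and use the composition rule for the joint functional calculus of the strongly commuting positive operators $T\otimes 1$ and $1\otimes T$ (where $\log(xy)=\log x+\log y$ on $(0,\infty)^{2}$ carries the joint spectral measure), whereas the paper first passes to the bounded operators $T^{it}$, observes $\Gamma(T^{it})=T^{it}\otimes T^{it}$, and recovers the generator identity from Stone's theorem. Both routes are legitimate: the Stone's-theorem detour keeps every intermediate object bounded, while your direct computation is shorter but leans on the unbounded joint calculus, which the paper has in any case set up in Section~3.1. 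Your explicit injectivity check for $\alpha\mapsto\exp{i\alpha}$ is trivial but is likewise omitted in the paper, so nothing in your write-up is redundant.
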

\begin{proof}
That $\exp{i\alpha}$ is positive and affiliated with $VN(G)$ follows from the functional calculus.

Let $T\in G_{\mathbb C,\lambda}\cap\overline{VN(G)}^+$. Using arguments similar to those in the proof of the previous proposition we obtain
$$
\begin{array}{ccc}
\Gamma(T^{it})=W^*(1\otimes T^{it})W=\Gamma(T)^{it}=(T\otimes T)^{it}=T^{it}\otimes T^{it}, & \text{for $t\in\mathbb R.$}
\end{array}
$$
Let $A=\int_{\mathbb R^+}\ln t\;dE_T(t)$. Then $T^{it}=\exp{itA}$ and
$$
\begin{array}{cccc}
\exp{it\Gamma(A)}=\Gamma(\exp{itA})=\exp{itA}\otimes\exp{itA}=\exp{it(A\otimes 1+1\otimes A)}, &\text{for $t\in\mathbb R.$}
\end{array}
$$
By Stone's theorem about infinitisimal  generator of a strongly continuous unitary group, we obtain
$\Gamma(A)=A\otimes 1+1\otimes A$. Set $\alpha=-iA$. Then $T=\exp{i\alpha}$.
\end{proof}

\begin{prop}\label{complexi}
$$G_{\mathbb C,\lambda}=\{\lambda(s)\exp{i\alpha}\;|\; \alpha\in\Lambda,\; s\in G\}.$$
\end{prop}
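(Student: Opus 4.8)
The plan is to establish the two inclusions separately, deriving both from the uniqueness of the polar decomposition together with the facts already recorded about $\Gamma$ and about $G_{\mathbb C,\lambda}$.

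For the inclusion $\supseteq$, I would fix $s\in G$ and $\alpha\in\Lambda$ and set $T=\lambda(s)\exp(i\alpha)$. Since $\lambda(s)$ is a bounded unitary in $VN(G)$ and $\exp(i\alpha)=\int_{\mathbb R}e^{t}\,dE_{i\alpha}(t)$ is positive and injective, this expression is already the polar decomposition of $T$, so in particular $T\neq 0$. Using the recorded fact that $\Gamma(\lambda(s))\Gamma(\exp(i\alpha))$ is the polar decomposition of $\Gamma(T)$ and that $(\lambda(s)\otimes\lambda(s))(\exp(i\alpha)\otimes\exp(i\alpha))$ is the polar decomposition of $T\otimes T$, together with $\Gamma(\lambda(s))=\lambda(s)\otimes\lambda(s)$ and $\exp(i\alpha)\in G_{\mathbb C,\lambda}$, I obtain $\Gamma(T)=T\otimes T$, so $T\in G_{\mathbb C,\lambda}$.

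For the inclusion $\subseteq$, the key step is to feed the defining relation into the polar decomposition. Given $T\in G_{\mathbb C,\lambda}$, write $T=U|T|$ with $U\in VN(G)$ a partial isometry and $|T|$ positive and affiliated with $VN(G)$. By the preliminaries, $\Gamma(U)\Gamma(|T|)$ is the polar decomposition of $\Gamma(T)$ and $(U\otimes U)(|T|\otimes|T|)$ is the polar decomposition of $T\otimes T$; since $\Gamma(T)=T\otimes T$, uniqueness of the polar decomposition forces $\Gamma(U)=U\otimes U$ and $\Gamma(|T|)=|T|\otimes|T|$. As $T\neq 0$ we have $|T|\neq 0$, so $|T|\in G_{\mathbb C,\lambda}\cap\overline{VN(G)}^{+}$, and Proposition~\ref{bijection} produces $\alpha\in\Lambda$ with $|T|=\exp(i\alpha)$.

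It then remains to identify $U$ with some $\lambda(s)$. Applying the $*$-homomorphism $\Gamma$ to the projections $P=U^{*}U$ and $Q=UU^{*}$ and using $\Gamma(U)=U\otimes U$, I find $\Gamma(P)=P\otimes P$ and $\Gamma(Q)=Q\otimes Q$. The crucial observation is that the only nonzero projection $E$ with $\Gamma(E)=E\otimes E$ is $E=I$: such an $E$ lies in $G_{\mathbb C,\lambda}\cap VN(G)=\lambda(G)$, say $E=\lambda(s_{0})$, and the relations $E=E^{*}=E^{2}$ force $s_{0}=s_{0}^{-1}$ and $s_{0}^{2}=s_{0}$, hence $s_{0}=e$ and $E=I$. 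Since $T\neq 0$ gives $U\neq 0$, both $P$ and $Q$ are nonzero, so $P=Q=I$ and $U$ is unitary; being a nonzero unitary in $VN(G)$ with $\Gamma(U)=U\otimes U$, it lies in $\lambda(G)$, i.e.\ $U=\lambda(s)$, and therefore $T=\lambda(s)\exp(i\alpha)$. I expect the main obstacle to be the splitting step: one must be certain that $\Gamma(U)\Gamma(|T|)$ and $(U\otimes U)(|T|\otimes|T|)$ are genuine polar decompositions of the same unbounded operator before invoking uniqueness, and this is precisely what the preliminary facts on the polar decompositions of $\Gamma(T)$ and of tensor products are there to guarantee; the remaining work is the purely algebraic identification of the only projection solving $\Gamma(E)=E\otimes E$.
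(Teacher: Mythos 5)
Your proof is correct and follows essentially the same route as the paper: split $T=U|T|$, use uniqueness of the polar decomposition of $\Gamma(T)=T\otimes T$ to get $\Gamma(U)=U\otimes U$ and $\Gamma(|T|)=|T|\otimes|T|$, identify $U$ with some $\lambda(s)$ via the characterization of $\lambda(G)$ as the non-zero bounded solutions, and invoke \proposition~\ref{bijection} for $|T|$. Your detour through the projections $U^{*}U$ and $UU^{*}$ to establish unitarity is harmless but redundant, since the cited fact already applies to any non-zero $U\in VN(G)$ with $\Gamma(U)=U\otimes U$ and forces $U\in\lambda(G)$, hence unitary.
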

\begin{proof}
As it was noticed before, if  $T=U|T|$ is the polar decomposition of $T\in G_{\mathbb C,\lambda}$ then $\Gamma(T)=\Gamma(U)\Gamma(|T|)$ and $T\otimes  T=(U\otimes U)|T|\otimes|T|$ are the polar decompositions of $\Gamma(T)$ and $T\otimes T$ respectively. Hence, by uniqueness of the polar decomposition, the equality $\Gamma(T)=T\otimes T$ implies $\Gamma(U)=U\otimes U$ and $\Gamma(|T|)=|T|\otimes|T|$. As $\lambda(G)=\{\lambda(s)\; |\; s\in G\}$ is precisely the family of non-zero bounded operators in $G_{\mathbb C,\lambda}$, it gives  $U=\lambda(s)$ for some $s\in G$. The statement now follows from \proposition~\ref{bijection}. \end{proof}

Let $G$ be a connected Lie group and $\mathcal g$ its Lie algebra with the exponential map $\exp_G:\mathcal g\to G$.  Let $\pi:G\to B(\h_\pi)$ be a unitary representation of $G$. A vector $\varphi\in \h_\pi$ is called a $C^\infty$-vectors for $\pi$ if the map $s\to\pi(s)\varphi$ from the $C^\infty$-manifold $G$ to $\h_\pi$ is a $C^\infty$-mapping. We write $\mathcal D^\infty(\pi)$ for the set of $C^\infty$-vectors for $\pi$. For $X\in\mathcal g$ we define the operator $d\pi(X)$ with domain $\mathcal D^\infty(\pi)$ by
$$
\begin{array}{cccc}
d\pi(X)\varphi=\frac{d}{dt}\pi(\exp_G{(tX))}\varphi|_{t=0},&\text{for $ \varphi\in \mathcal D^\infty(\pi).$}
\end{array}
$$
It is known that $id\pi(X)$ is essentially self-adjoint. We denote its self-adjoint closure by $i\partial\pi(X)$ which is the infinitesimal generator of the strongly continuous one-parameter unitary group $t\mapsto \pi(\exp_G(tX))$, i.e.
$$\pi(\exp_G{(tX)})=\exp{(t\partial\pi(X))}.$$

\begin{prop}\label{factorisation}
Let $G$ be a connected Lie group with Lie algebra $\mathcal g$. Then $\Lambda=\{\partial\lambda(X)\;|\; X\in\mathcal g\}$ and $G_{\mathbb C,\lambda}=\{\lambda(s)\exp{(i\partial\lambda(X))}\;|\; s\in G,\; X\in\mathcal g\}$.
\end{prop}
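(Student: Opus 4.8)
The second identity is a formal consequence of the first: granting $\Lambda=\{\partial\lambda(X)\mid X\in\mathcal g\}$, \proposition~\ref{complexi} already gives $G_{\mathbb C,\lambda}=\{\lambda(s)\exp(i\alpha)\mid \alpha\in\Lambda,\ s\in G\}$, and substituting $\alpha=\partial\lambda(X)$ yields exactly the asserted description. So the whole proof reduces to the set equality $\Lambda=\{\partial\lambda(X)\mid X\in\mathcal g\}$, which the plan is to prove by two inclusions.

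For $\{\partial\lambda(X)\}\subseteq\Lambda$, I would fix $X\in\mathcal g$ and set $\alpha=\partial\lambda(X)$. By definition $i\alpha$ is self-adjoint and $\exp(t\alpha)=\lambda(\exp_G(tX))$, so the unitary group $t\mapsto\exp(t\alpha)$ takes values in $\lambda(G)\subseteq VN(G)$; since the spectral projections of the generator of a strongly continuous unitary group lie in the von Neumann algebra generated by the group, $\alpha$ is affiliated with $VN(G)$, i.e. $\alpha\in\overline{VN(G)}$ with $\alpha^*=-\alpha$. Applying $\Gamma(\lambda(s))=\lambda(s)\otimes\lambda(s)$ at $s=\exp_G(tX)$ gives $\Gamma(\exp(t\alpha))=\exp(t\alpha)\otimes\exp(t\alpha)$. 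Next I would run the functional-calculus computation from the proof of the (unlabelled) proposition preceding \proposition~\ref{bijection}, which transports the spectral measure of $1\otimes i\alpha$ through $W^*(\cdot)W$ to give $\Gamma(\exp(t\alpha))=\exp(t\,\Gamma(\alpha))$ with $\Gamma(\alpha)=W^*(1\otimes\alpha)W$; on the other hand, as $\alpha\otimes 1$ and $1\otimes\alpha$ strongly commute, $\exp(t\alpha)\otimes\exp(t\alpha)=\exp\!\big(t(\alpha\otimes 1+1\otimes\alpha)\big)$. Equating the two skew-adjoint generators by Stone's uniqueness theorem yields $\Gamma(\alpha)=\alpha\otimes 1+1\otimes\alpha$, i.e. $\alpha\in\Lambda$.

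For the reverse inclusion $\Lambda\subseteq\{\partial\lambda(X)\}$, take $\alpha\in\Lambda$ and consider $u(t)=\exp(t\alpha)$. Since $\alpha$ is affiliated with $VN(G)$, every bounded Borel function of $i\alpha$ lies in $VN(G)$, so $u(t)\in VN(G)$; and by the proposition cited above, $u(t)\in G_{\mathbb C,\lambda}$. As $u(t)$ is a nonzero bounded operator, $u(t)\in G_{\mathbb C,\lambda}\cap VN(G)=\lambda(G)$, so there is a unique $s(t)\in G$ with $u(t)=\lambda(s(t))$. The group law $u(t_1+t_2)=u(t_1)u(t_2)$ together with injectivity of $\lambda$ forces $s(t_1+t_2)=s(t_1)s(t_2)$, so $t\mapsto s(t)$ is a one-parameter subgroup of $G$.

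It remains to upgrade this to a \emph{continuous} one-parameter subgroup and invoke Lie theory. Since $t\mapsto u(t)$ is strongly continuous (Stone) and $\lambda$ is a homeomorphism of $G$ onto $\lambda(G)$ in the strong operator topology, $t\mapsto s(t)=\lambda^{-1}(u(t))$ is continuous. By the standard correspondence between continuous one-parameter subgroups of the connected Lie group $G$ and elements of $\mathcal g$, there is a unique $X\in\mathcal g$ with $s(t)=\exp_G(tX)$; hence $\exp(t\alpha)=\lambda(\exp_G(tX))=\exp(t\,\partial\lambda(X))$, and uniqueness of the infinitesimal generator (Stone again) gives $\alpha=\partial\lambda(X)$, completing both inclusions. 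I expect the only genuinely delicate point to be the continuity of $t\mapsto s(t)$: it rests on the fact that $\lambda$ is a homeomorphism onto its image in the strong operator topology (equivalently, that a measurable homomorphism $\mathbb R\to G$ is automatically continuous), whereas everything else is functional calculus and two applications of Stone's uniqueness theorem.
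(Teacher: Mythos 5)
Your proof is correct and follows essentially the same route as the paper: both inclusions are obtained by identifying $t\mapsto\lambda^{-1}(\exp(t\alpha))$ as a continuous one-parameter subgroup of $G$ (the paper uses the weak$^*$ topology where you use the strong operator topology to see $\lambda$ is a homeomorphism onto its image, which is immaterial here), and by differentiating/using uniqueness of generators to get $\Gamma(\partial\lambda(X))=\partial\lambda(X)\otimes 1+1\otimes\partial\lambda(X)$. Your explicit remark that the second displayed identity is just \proposition~\ref{complexi} applied to $\Lambda=\{\partial\lambda(X)\}$, and your justification that $\exp(t\alpha)\in G_{\mathbb C,\lambda}\cap VN(G)=\lambda(G)$, fill in details the paper leaves implicit.
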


\begin{proof}
If $\alpha\in\Lambda$ then $\{\exp{(t\alpha)}|\; t\in\mathbb R\}$ is a strongly continuous one parameter group in $\lambda(G)\subset VN(G)$. Moreover,
$$
\begin{array}{ccc}
\langle \exp{(t\alpha)},\bar\eta\ast\check\xi\rangle=\langle \exp{(t\alpha)}\xi,\eta\rangle, &\text{for $\xi,\eta\in L^{2}(G),$}
\end{array}
$$
and $\{\exp{(t\alpha)}|\; t\in\mathbb R\}$ is continuous in the weak$^*$ topology on $VN(G)$ with the weak$^*$-limit
$w^*-\lim_{t\to 0}\exp{(t\alpha)}=1$.  Since $\lambda:G\to \lambda(G)\subset VN(G)$ is a homeomorphism when $VN(G)$ carries weak$^*$ topology it follows that 
$\lambda^{-1}(\exp{t\alpha})$ is a continuous one-parameter subgroup of $G$. Therefore there exists $X\in \mathcal g$ such that $\lambda^{-1}(\exp{(t\alpha)})=\exp{(tX)}$ and $\lambda(\exp{(tX)})=\exp{(t\alpha)}$, $t\in\mathbb R$,  giving $\partial \lambda(X)=\alpha$ and $$\Lambda\subset \{\partial\lambda(X)\;|\; X\in\mathcal g\}.$$

To see the reverse inclusion, we note that  $\partial\lambda(X)\in \overline{VN(G)}$, $\Gamma(\exp{(t\partial\lambda(X))})=\exp{(t\Gamma(\partial\lambda(X)))}$ and
$$
\begin{array}{ccc}
\exp{(t\Gamma(\partial\lambda(X)))}=\exp{(t\partial\lambda(X))}\otimes \exp{(t\partial\lambda(X))}, &\text{for $ t\in\mathbb R.$}
\end{array}
$$ Since
$\lim_{t\to 0}t^{-1}[\exp{(tV)}\varphi-\varphi]=V\varphi$ for any closed skew adjoint operator $V$ and $\varphi\in \mathcal D(V)$, we can easily obtain that $\partial\lambda(X)\in\Lambda$.

\end{proof}

\begin{rem*}\rm
Our definition is motivated by the work of McKennon \cite{McK1} and Cartwright and McMullen \cite{cartwright_mcmullen}, where they developed an abstract Lie theory for general, not necessarily Lie, compact groups. If we choose representatives $\pi_j:G\to B(H_j)$ of the isomorphism classes of irreducible (finite-dimensional) unitary representations of $G$ and identify $VN(G)$ with the $\ell^\infty$-sum of $B(H_j)$, then $\overline{VN(G)}=\prod_jB(H_j)\simeq \text{Trig}(G)^\dagger$, where $ \text{Trig}(G)^\dagger$  is the linear dual of the span of coefficients of irreducible unitary representations of $G$. In this case we have that $G_{\mathbb C,\lambda}$ coincides with the complexification $G_{\mathbb C}$  from \cite{cartwright_mcmullen, McK1} . We have that $G_{\mathbb C}$ is a group and as $G\simeq \{X\in VN(G)\;|\;  \Gamma(X)=X\otimes X, X\ne 0\}$ (\cite{takesaki}), one has a Cartan decomposition $G_{\mathbb C}\simeq G\cdot\exp{i\Lambda}$. 
If $G$ is a compact  connected Lie group then $G_{\mathbb C}=G_{\mathbb C,\lambda}$  coincides with the well-known construction of the universal complexification of $G$ due to Chevalley and the Lie algebra of $G_{\mathbb C}$ is the complexification $\mathcal  g_{\mathbb C}$ of the Lie algebra $\mathcal g\simeq\Lambda$ of $G$, where the usual Lie bracket $[X,Y]=XY-YX$ is considered in $\Lambda$. For instance $\mathbb T_{\mathbb C}\simeq \mathbb C^*$ and $(SU(n))_{\mathbb C}\simeq SL(n,\mathbb C)$. 

The concept of complexification was later generalised from compact to general locally compact groups in \cite{McK2} by McKennon, where the group $W^*$-algebra $W^*(G)$ was used instead of $VN(G)$. Our construction is an adaptation of McKennon's idea to the group von Neumann algebra setting. We have chosen this approach as it fits better our purpose to describe the spectrum of Beurling-Fourier algebras. 
As for the compact  group case McKennon's complexification $G_{\mathbb C}$ admits a factorisation $G_{\mathbb C}=G_\gamma\cdot G_{\mathbb C}^+$, where $G_\gamma$ is the image of $G$ under the canonical monomorphism $\gamma$ from $G$ to the group of unitary elements of $W^*(G)$ (compare this to the factorisation in \proposition~\ref{factorisation}).     However unlike the compact case, the unboundedness of elements in $G_{\mathbb C}^+$ and also $G_{\mathbb C,\lambda}\cap \overline{VN(G)}^+$ causes a problem in considering $G_{\mathbb C}$ and $G_{\mathbb C,\lambda}$ as groups, see \cite[section 4]{McK2}. A relation to the universal complexification of $G$, when $G$ is a  Lie group, is also unclear in general. However, in many interesting examples considered in \cite{gllst} we have $G_{\mathbb C,\lambda}= \lambda_{\mathbb C}(G_{\mathbb C}^u)$, where $G_\mathbb C^u$ is the universal complexification of $G$ and $\lambda_{\mathbb C}$ is the extension of the left regular representation to $G_{\mathbb C}^u$; the equality means  that for any $\varphi\in G_{\mathbb C,\lambda}$ there exists $g\in G_{\mathbb C}^u$ such that such that $\varphi=\overline{\lambda_{\mathbb C}(g)}$, see the discussion in \cite[section 2.3]{gllst}; in those cases one also has the Cartan decomposition
$$G_{\mathbb C}^u\simeq G\cdot\exp_{\mathbb C}(i\mathcal g),$$
where $\exp_{\mathbb C}$ is the extension of the exponential map to the complexification $\mathcal g_{\mathbb C}$ of the Lie algebra $\mathcal g$ of $G$.
It seems an interesting question to investigate the group structure of $G_{\mathbb C,\lambda}$ but it diverges from the main purpose of this paper. 

\end{rem*}

\section{\sc{\textbf{The spectrum of  Beurling-Fourier algebra and complexification}}}

In this section we establish sufficient conditions in terms of groups and weight inverses for the inclusion of the Gelfand spectrum of $A(G,\omega)$ into the $\lambda$-complexification $G_{\mathbb C,\lambda}$ of $G$, generalising some earlier results from \cite{lst} and \cite{gllst}.

\subsection{Point-spectrum Correspondence}
Let $\phi:A(G,\omega)\to  \mathbb{C}$ be a character of $A(G,\omega)$. By the duality~\eqref{duality}, there is a unique $\sigma\in VN(G)$ such that for any $\omega u\in A(G,\omega)$ we have $\phi(\omega u)=(\sigma, \omega u)_\omega=(\sigma,u)$.  The multiplicativity of $\phi$ gives
\begin{equation}\label{spec}
\sigma\otimes \sigma=\Gamma(\sigma)\Omega.
\end{equation}
and moreover, every $\sigma\in VN(G)$ satisfying~\eqref{spec} gives rise to a unique point in the spectrum $\spec\, A(G,\omega).$
In fact, for $u$, $v\in A(G)$, 
\begin{eqnarray*}
\phi((\omega u)(\omega v))&=&(\sigma, (\omega u)(\omega v))_\omega=(\sigma, \omega\Gamma_*(\Omega(u\otimes v)))_\omega\\&=&(\sigma, \Gamma_*(\Omega(u\otimes v)))=(\Gamma(\sigma)\Omega,u\otimes v);
\end{eqnarray*}
on the other hand
\begin{eqnarray*}
\phi(\omega u)\phi(\omega v)&=&(\sigma, \omega u)_\omega(\sigma, \omega v)_\omega=(\sigma, u)(\sigma, v)=(\sigma\otimes\sigma,u\otimes v),
\end{eqnarray*}
giving (\ref{spec}).

We can thus identify $\spec \,A(G,\omega)$ as the set of all non-zero elements $\sigma\in VN(G)$ satisfying~\eqref{spec}, i.e.
$$\spec\, A(G,\omega)=\{\sigma\in VN(G)\; |\;  \Gamma(\sigma)\Omega=\sigma\otimes\sigma, \sigma\ne 0\}.$$
Note that $\spec\, A(G,\omega)$ depend on the $2$-cocycle $\Omega$ rather than the weight inverse $\omega.$ Moreover, by~\eqref{spec}, for any $\sigma\in A(G,\omega)$
$$
\sigma\sigma^{*}\otimes\sigma\sigma^{*}=\Gamma(\sigma)\Omega\Omega^{*}\Gamma(\sigma)^{*}\leq \Gamma(\sigma\sigma^{*}),
$$
thus satisfying condition~\eqref{iw} in the definition of a weight inverse. It is a question whether $\sigma$ also satisfies~\eqref{kernel}. We will see in this section that in many cases (though we conjecture all) the elements in $\spec\, A(G,\omega)$ are again weight inverses.
\\

We let $S$ be the antipode of $VN(G)$;  this is an anti-isomorphism of $VN(G)$  given by $S(\lambda(s))=\lambda(s^{-1})$, $s\in G$. If $W$ is the multiplicative unitary and $w\in B(L^2(G))_*$, then 
\begin{equation}\label{anti}
S((\iota\otimes w)(W^*))=(\iota\otimes w)(W).
\end{equation}   
We refer to \cite{enock-shwartz} for background on the theory of Hopf-von-Neumann algebras but warn that our notations may differ from those in \cite{enock-shwartz}. 

Throughout the rest of this section, we use $\h$ for $L^{2}(G)$ and write 
$\psi_{\xi,\eta}$ to denote the normal functional  on $B(\h)$ given by $\psi_{\xi,\eta}(x)=\langle x\xi,\eta\rangle$,  $x\in B(\h)$.

\begin{lem}\label{star_lemma}
Let $\sigma\in \spec\, A(G,\omega)$. Then 
$$\psi_{\xi,\tilde\xi}(S(\iota\otimes\psi_{\sigma^*\eta,\tilde\eta}(\Omega^*W^*)))=
\langle(1\otimes\sigma^*)W(S(\sigma^*)\otimes 1)\xi\otimes\eta,\tilde\xi\otimes\tilde\eta\rangle,$$
for any $\xi$, $\eta$, $\tilde\xi$, $\tilde\eta\in \h$.
\end{lem}
\begin{proof}
From (\ref{anti}) and $\Omega^*\Gamma(\sigma^*)=\sigma^*\otimes\sigma^*$,  we have
\begin{eqnarray*}
&&\psi_{\xi,\tilde\xi}(S(\iota\otimes\psi_{\sigma^*\eta,\tilde\eta}(\Omega^*W^*)))
=\psi_{\xi,\tilde\xi}(S(\iota\otimes\psi_{\eta,\tilde\eta}(\Omega^*W^*(1\otimes\sigma^*)))\\
&&=\psi_{\xi,\tilde\xi}(S(\iota\otimes\psi_{\eta,\tilde\eta}(\Omega^*\Gamma(\sigma^*)W^*)))
=\psi_{\xi,\tilde\xi}(S(\iota\otimes\psi_{\eta,\tilde\eta}(\sigma^*\otimes\sigma^*W^*)))\\
&&=\psi_{\xi,\tilde\xi}(S(\sigma^*(\iota\otimes\psi_{\eta,\tilde\eta}((1\otimes\sigma^*)W^*)))
=\psi_{\xi,\tilde\xi}(S(\sigma^*(\iota\otimes\psi_{\eta,\sigma\tilde\eta}(W^*))\\
&&=\psi_{\xi,\tilde\xi}(S(\iota\otimes\psi_{\eta,\sigma\tilde\eta}(W^*))S(\sigma^*))
=\psi_{\xi,\tilde\xi}((\iota\otimes\psi_{\eta,\sigma\tilde\eta}(W))S(\sigma^*))\\
&&=\psi_{\xi,\tilde\xi}\otimes\psi_{\eta,\sigma\tilde\eta}(W(S(\sigma^*)\otimes 1))
=\psi_{\xi,\tilde\xi}\otimes\psi_{\eta,\tilde\eta}((1\otimes\sigma^*)W(S(\sigma^*)\otimes 1))\\
&&=\langle (1\otimes\sigma^*)W(S(\sigma^*)\otimes 1)\xi\otimes\eta,\tilde\xi\otimes\tilde\eta\rangle.
\end{eqnarray*}
\end{proof}
\begin{prop}\label{thm1}
Let $G$ be a locally compact group and let $\sigma\in \spec\, A(G,\omega)$. Assume that  $\sigma^{*}(\h)\cap\omega^{*}(\h)\ne\{0\}$. Then
$$S(\omega)\omega=S(\sigma)\sigma.$$
\end{prop}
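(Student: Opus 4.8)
The plan is to exploit the fact, visible from \eqref{spec} together with \eqref{weight}, that $\sigma$ satisfies \emph{the same} twisted equation as the weight inverse: $\sigma\otimes\sigma=\Gamma(\sigma)\Omega$ and $\omega\otimes\omega=\Gamma(\omega)\Omega$, with one common cocycle $\Omega$. Formally the identity is immediate. Applying the antipodal contraction $m\circ(S\otimes\iota)$, where $m(a\otimes b)=ab$ and using the antipode axiom $m\circ(S\otimes\iota)\circ\Gamma=\epsilon(\cdot)\,I$ with $\epsilon$ the co-unit of $VN(G)$, one gets $S(x)x=\epsilon(x)\,m(S\otimes\iota)(\Omega)$ for \emph{every} solution $x$ of $x\otimes x=\Gamma(x)\Omega$; and slicing $x\otimes x=\Gamma(x)\Omega$ by the character $\epsilon\otimes\epsilon$ forces $\epsilon(x)=(\epsilon\otimes\epsilon)(\Omega)$ to be the same constant for all such $x$. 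Hence $S(\omega)\omega$ and $S(\sigma)\sigma$ both equal $(\epsilon\otimes\epsilon)(\Omega)\,m(S\otimes\iota)(\Omega)$ and coincide. The difficulty is that neither $m:VN(G)\bar\otimes VN(G)\to VN(G)$ nor the co-unit $\epsilon$ is bounded or normal on $VN(G)$ (the latter is bounded exactly when $G$ is amenable), so this computation is only formal; the hypothesis $\sigma^{*}(\h)\cap\omega^{*}(\h)\neq\{0\}$ is what I would use to bypass both.

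First I would recast the problem away from $m$ and $\epsilon$. Since $S(T)T\in VN(G)$ is bounded, it suffices to prove the predual identity $(\,\omega,\check{(\omega u)}\,)=(\,\sigma,\check{(\sigma u)}\,)$ for all $u\in A(G)$, where $\check v(s)=v(s^{-1})$: using the module rule $(RT,u)=(R,Tu)$ and the antipode--flip relation $(S(T),u)=(T,\check u)$ coming from \eqref{anti}, one has $(S(\omega)\omega,u)=(S(\omega),\omega u)=(\omega,\check{(\omega u)})$, and likewise for $\sigma$. As a sanity check, when $\Omega=I$ both sides reduce to $u(e)$, i.e. to the co-unit, consistently with $S(\omega)\omega=S(\sigma)\sigma=I$.

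The key structural input is that the derivation of \eqref{omegalambda} in \lemma~\ref{lambdaOmega} used nothing about $\omega$ beyond $\omega\otimes\omega=\Gamma(\omega)\Omega$. Since $\sigma$ obeys the same relation, the identical computation yields $\sigma\lambda_\Omega(f)=M_{\check{(\sigma f)}}\sigma$ for all $f\in A(G)$, alongside $\omega\lambda_\Omega(f)=M_{\check{(\omega f)}}\omega$. Taking adjoints gives $\lambda_\Omega(f)^{*}\omega^{*}=\omega^{*}M_{\overline{\check{(\omega f)}}}$ and $\lambda_\Omega(f)^{*}\sigma^{*}=\sigma^{*}M_{\overline{\check{(\sigma f)}}}$, so that on a common vector $\zeta=\omega^{*}b=\sigma^{*}a$ furnished by the hypothesis one obtains, for every $f\in A(G)$,
\[
\lambda_\Omega(f)^{*}\zeta=\omega^{*}\big(\overline{\check{(\omega f)}}\,b\big)=\sigma^{*}\big(\overline{\check{(\sigma f)}}\,a\big).
\]
Thus the whole orbit $\{\lambda_\Omega(f)^{*}\zeta\}_{f\in A(G)}$ lies in $\omega^{*}(\h)\cap\sigma^{*}(\h)$ and admits descriptions through both $\omega$ and $\sigma$ at once. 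I would then close the argument by evaluating the two sides of the predual identity against this orbit and transporting the $\omega$-description to the $\sigma$-description through these coupled equations, the single vector $\zeta$ playing the role that a (non-existent) inverse of $\omega$ plays in the formal calculation.

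The main obstacle is precisely this last upgrade. Because $\omega$ and $\sigma$ are only injective contractions with dense range and are \emph{not} invertible, one cannot literally cancel $\Omega$ (which would wrongly force $\sigma=\lambda(s)\omega$ and collapse $\spec\,A(G,\omega)$ onto $G$), nor apply the unbounded $m$ and $\epsilon$. The content of $\sigma^{*}(\h)\cap\omega^{*}(\h)\neq\{0\}$ is to supply one nonzero vector at which the two pictures are genuinely linked; the delicate step is to promote agreement along the $\lambda_\Omega$-orbit of that vector to the operator identity $S(\omega)\omega=S(\sigma)\sigma$, for which I expect to need the density of $\mathrm{Ran}\,\Gamma(\omega^{*})$ exploited already in \lemma~\ref{lambdaOmega} together with the injectivity conditions \eqref{kernel}.
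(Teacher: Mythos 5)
Your preparatory steps are all correct: the reformulation $(S(\omega)\omega,u)=(\omega,\check{(\omega u)})$ is valid, and the observation that the derivation of \eqref{omegalambda} uses only the relation $\Gamma(\cdot)\Omega=\cdot\otimes\cdot$, so that $\sigma\lambda_\Omega(f)=M_{\check{(\sigma f)}}\sigma$ holds as well, does give you the coupled identity $\lambda_\Omega(f)^*\zeta=\omega^*\bigl(\overline{\check{(\omega f)}}\,b\bigr)=\sigma^*\bigl(\overline{\check{(\sigma f)}}\,a\bigr)$ on the common vector $\zeta=\omega^*b=\sigma^*a$. But the proof stops exactly where it has to start. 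Unwound, the target reads $\langle\omega\xi,S(\omega^*)\eta\rangle=\langle\sigma\xi,S(\sigma^*)\eta\rangle$ for \emph{all} $\xi,\eta\in\h$, whereas your orbit relation only controls integrals of the form $\int\check{(\omega f)}(t)\,\overline{b(t)}\,(\omega\xi)(t)\,dt$ against the \emph{one} fixed pair $(a,b)$. Nothing in the sketch shows how to trade the fixed $b$ for an arbitrary $\omega^*\bar\eta$ or to strip off the auxiliary $f$; ``transporting the $\omega$-description to the $\sigma$-description along the orbit'' is precisely the statement to be proved. The natural completion of your route is the argument the paper gives in \proposition~\ref{compactsupport} (integrate $\langle\lambda_\Omega(s^{-1}\cdot f)\zeta_1,\sigma^*\zeta_2\rangle$ over $G$, apply Fubini and dominated convergence), but that requires unimodularity and density of $\mathcal D(\omega^{-1})\cap C_c(G)$ --- hypotheses the present proposition does not assume. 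So this is a genuine gap, not a routine detail.

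The paper's actual proof closes the loop by a different, single-vector tensor-splitting device which your sketch does not capture. One slices $\Omega^*W^*$ in the \emph{second} leg by $\psi_{\sigma^*\eta,\tilde\eta}$, landing in $VN(G)$ where $S$ is an honest normal anti-automorphism (this is how the unboundedness of $S\otimes\iota$ is bypassed), and uses \eqref{anti} together with $\Omega^*\Gamma(\sigma^*)=\sigma^*\otimes\sigma^*$ to prove that $\psi_{\xi,\tilde\xi}\bigl(S\bigl((\iota\otimes\psi_{\sigma^*\eta,\tilde\eta})(\Omega^*W^*)\bigr)\bigr)=\bigl\langle(1\otimes\sigma^*)W(S(\sigma^*)\otimes1)(\xi\otimes\eta),\tilde\xi\otimes\tilde\eta\bigr\rangle$. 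The left-hand side depends on $\sigma$ only through the vector $\sigma^*\eta$; hence, matching the second legs through the common vector supplied by $\sigma^{*}(\h)\cap\omega^{*}(\h)\ne\{0\}$, the operators $(1\otimes\omega^*)W(S(\omega^*)\otimes1)$ and $(1\otimes\sigma^*)W(S(\sigma^*)\otimes1)$ agree on the corresponding simple tensors. One further left multiplication by $\Omega^*W^*$ converts this to $\omega^*S(\omega^*)\xi\otimes\omega^*\eta=\sigma^*S(\sigma^*)\xi\otimes\sigma^*\zeta$, and since the second factors are equal and nonzero the first factors must agree, giving $S(\omega)\omega=S(\sigma)\sigma$. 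To repair your proof you would need either to reproduce this identity or to import the extra hypotheses of \proposition~\ref{compactsupport}; as written, the argument does not reach the conclusion.
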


\begin{rem*} \rm
It has been known for compact groups (\cite{lst}) and some Lie groups with certain weights  (\cite{gllst}) that the operators $\sigma\omega^{-1}$, $\sigma\in \spec\, A(G,\omega)$, are "points" of the complexification $G_{\mathbb C}$.  From this, the claim of the proposition becomes  intuitively quite clear. Formally, if there is an element $T\in G_{\mathbb{C}}$ such that $\sigma=T\omega$ then, as $S(T)=T^{-1}$ (the antipode "inverts" the elements of $G$ and $G_{\mathbb C}$),  we would have $S(\sigma)\sigma=S(T\omega)T\omega=S(\omega)T^{-1}T\omega=S(\omega)\omega.$
\end{rem*}
\begin{proof}

Let $\eta$ and $\zeta$ in $\h$ be such that $\sigma^*\zeta=\omega^*\eta\ne 0$. By  \lemma~\ref{star_lemma}, we have
$$(1\otimes\omega^*)W(S(\omega^*)\otimes 1)\xi\otimes\zeta= (1\otimes\sigma^*)W(S(\sigma^*)\otimes 1)\xi\otimes\eta$$
for any $\xi\in \h$.

Multiplying both hand sides of the equality from the left by $\Omega^*W^*$  and using the equality  $\Omega^*\Gamma(\sigma)^*=\sigma^*\otimes\sigma^*$ which holds  for all $\sigma\in \spec\, A(G,\omega)$ and in particular for $\omega$,
we conclude that
$$
\begin{array}{cccc}
\omega^*S(\omega^*)\xi\otimes\omega^*\eta=\sigma^*S(\sigma^*)\xi\otimes\sigma^*\zeta, &\text{for all $\xi \in  \h,$}
\end{array}
$$ and hence
$\omega^*S(\omega)^*=\sigma^*S(\sigma)^*.$
\end{proof}

\begin{rem*}\rm
The following formal calculations support
the idea that the above proposition might be true for any $\sigma\in \spec\, A(G,\omega)$.

Consider $M=(S\otimes\iota)(W\Omega)W\Omega.$ From~\eqref{weight} it follows that
$(I\otimes\omega)W\Omega=W(\omega\otimes\omega)$,
and hence $(I\otimes \omega)(S\otimes\iota)(W\Omega)=(S(\omega)\otimes I)W^{*}(I\otimes \omega).$ We then calculate
\begin{eqnarray*}
(I\otimes \omega)M&=&(S(\omega)\otimes I)W^{*}(I\otimes \omega)W\Omega= (S(\omega)\otimes I)W^{*}W(\omega\otimes \omega)\\&=&(S(\omega)\omega)\otimes \omega=
(I\otimes \omega)(S(\omega)\omega\otimes I).
\end{eqnarray*}
As $\ker\omega=\{0\}$ we get $M=(S(\omega)\omega)\otimes I.$ Let now $\sigma \in \spec\, A(G,\omega)$ be arbitrary. Similarly, 
$$
(I \otimes\sigma)M=(S(\sigma)\sigma)\otimes \sigma
$$
and hence $(S(\sigma)\sigma)\otimes \sigma=(S(\omega)\omega)\otimes \sigma.$ Therefore, 
$S(\sigma)\sigma=S(\omega)\omega.$

The calculations are only formal as $S$ is not  a completely bounded map in general and hence $S\otimes\iota$ is not defined on the whole $VN(G)\bar\otimes B(\h)$. By \cite[\proposition ~1.5]{forrest-runde}, $S$ is completely bounded if and only if $G$ is virtually abelian, i.e. has an abelian subgroup of finite index.  Consequently, for such $G$, $S(\sigma)\sigma=S(\omega)\omega$ for any $\sigma\in \spec\, A(G,\omega)$.
 
\end{rem*}
\begin{cor}\label{trivialkernel}
Let $\sigma\in \spec\, A(G,\omega)$. If $\sigma^{*}(\h)\cap\omega^{*}(\h)\ne\{0\}$ then $\ker \sigma=\{0\}.$
\end{cor}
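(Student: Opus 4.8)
The goal is to deduce from the identity $S(\omega)\omega = S(\sigma)\sigma$, established in Proposition~\ref{thm1} under the hypothesis $\sigma^{*}(\h)\cap\omega^{*}(\h)\neq\{0\}$, that $\ker\sigma=\{0\}$. The plan is to exploit the fact that $\omega$ is a genuine weight inverse, so by~\eqref{kernel} we already know $\ker\omega=\ker\omega^{*}=\{0\}$, and to transport this injectivity to $\sigma$ through the antipode identity. First I would observe that since the hypotheses of Proposition~\ref{thm1} are satisfied, we have the equality $S(\sigma)\sigma = S(\omega)\omega$ as operators in $VN(G)$ (note the proof of Proposition~\ref{thm1} actually produces $\omega^{*}S(\omega)^{*}=\sigma^{*}S(\sigma)^{*}$, whose adjoint is the stated identity).

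The key step is then to analyze the kernel of the right-hand side. Since $\ker\omega=\{0\}$, the operator $\omega$ is injective, so $\ker(S(\omega)\omega)=\ker\omega=\{0\}$ provided $S(\omega)$ is also injective on the range of $\omega$. Here I would use that $S$ is an anti-isomorphism of $VN(G)$: because $\ker\omega^{*}=\{0\}$, the operator $\omega$ has dense range, and the antipode relation $S(\lambda(s))=\lambda(s^{-1})$ together with the structure of $S$ should let me conclude that $S(\omega)$ inherits the triviality of kernel from $\omega$. Concretely, I expect that $S$ maps injective operators with dense range to injective operators with dense range, so that $S(\omega)\omega$ is a composition of two injective operators and hence itself injective, giving $\ker\bigl(S(\omega)\omega\bigr)=\{0\}$.

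Finally, transporting this back: from $S(\sigma)\sigma = S(\omega)\omega$ and the injectivity of the right-hand side, the left-hand side $S(\sigma)\sigma$ is injective. If $\xi\in\ker\sigma$, then $S(\sigma)\sigma\,\xi = 0$, forcing $\xi=0$ by injectivity of $S(\sigma)\sigma$; hence $\ker\sigma=\{0\}$, which is exactly the claim.

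The main obstacle I anticipate is the handling of the antipode $S$ as a map on (possibly unbounded or merely bounded but non-selfadjoint) operators: $S$ is in general only densely defined and not completely bounded, so I must be careful that $S(\omega)$ and $S(\sigma)$ are honest elements of $VN(G)$ to which the kernel argument applies. Since $\omega,\sigma\in VN(G)$ are bounded contractions (by the remark following Lemma~\ref{lem1}, weight inverses satisfy $\|\omega\|\le 1$, and elements of $\spec\,A(G,\omega)$ satisfy~\eqref{iw} hence are likewise contractions), and since $S$ is a genuine anti-isomorphism defined at least on a weak$^{*}$-dense domain that contains $\lambda(G)$, I expect $S(\omega)$ and $S(\sigma)$ to be well-defined bounded operators, so the injectivity argument goes through cleanly; the care lies in verifying that $S$ preserves the triviality of the kernel, which follows from its being a $*$-preserving anti-automorphism so that $\ker S(\omega)=\{0\}$ whenever $\ker\omega^{*}=\{0\}$.
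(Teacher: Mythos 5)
Your proof is correct and is essentially the paper's own argument: the paper's proof is the one-line chain $\ker\sigma\subseteq\ker S(\sigma)\sigma=\ker S(\omega)\omega=\{0\}$, where the last equality rests on exactly the facts you identify, namely $\ker\omega=\{0\}$ and $\ker S(\omega)=\{0\}$ (the latter because $S$ is a bounded $*$-preserving anti-automorphism of $VN(G)$, concretely $S(\omega)=J\omega^{*}J$, so injectivity of $S(\omega)$ follows from $\ker\omega^{*}=\{0\}$). Your worry about $S$ being only densely defined is moot here since $\omega,\sigma\in VN(G)$ and $S$ is everywhere defined and isometric on $VN(G)$.
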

\begin{proof}
This follows from \proposition ~\ref{thm1}, as $\ker\sigma\subseteq \ker S(\sigma)\sigma=\ker S(\omega)\omega=\{0\}.$
\end{proof}
A  natural question is when $\sigma\in \spec\, A(G,\omega)$ is again a weight inverse. Clearly,
$$
\sigma\sigma^{*}\otimes\sigma\sigma^{*}=\Gamma(\sigma)\Omega\Omega^{*}\Gamma(\sigma^{*})\leq \Gamma(\sigma\sigma^{*})
$$
and hence the first condition~\eqref{iw} of being a weight inverse is fulfilled. 
The same arguments as in \corollary~\ref{trivialkernel} show that if $S(\omega)\omega=S(\sigma)\sigma$ then $\ker \sigma=\{0\}.$ An issue is to obtain $\ker\sigma^{*}=\{0\}$.  We will adopt the extra condition $\ker\; \Omega^{*}=\{0\}$ as a way to guarantee it.
\begin{lem}\label{trivcoker}
If $\ker\; \Omega^{*}=\{0\},$ then $\ker\sigma^{*}=\{0\}$ for every $\sigma\in \spec\, A(G,\omega).$
\end{lem}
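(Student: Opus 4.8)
The plan is to show that the range projection of $\sigma$ is a nonzero group-like element of $VN(G)$, and therefore equals the identity. Write $Q\in VN(G)$ for the range projection of $\sigma$ (the projection onto $\overline{\mathrm{Ran}\,\sigma}$), so that $P:=I-Q$ is precisely the projection onto $\ker\sigma^{*}$; the goal becomes to prove $Q=I$. Since $\Gamma$ is a normal injective $*$-homomorphism it carries range (and kernel) projections to range (and kernel) projections, so $\Gamma(Q)$ is the range projection of $\Gamma(\sigma)$, $Q\otimes Q$ is the range projection of $\sigma\otimes\sigma$, and $\Gamma(P)$ is the projection onto $\ker\Gamma(\sigma^{*})$.

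First I would extract one inequality from the defining equation \eqref{spec} alone, using nothing about $\Omega$. Since $\Gamma(\sigma)\Omega=\sigma\otimes\sigma$, we have $\mathrm{Ran}(\sigma\otimes\sigma)=\mathrm{Ran}(\Gamma(\sigma)\Omega)\subseteq\mathrm{Ran}(\Gamma(\sigma))$; passing to closures and to the associated range projections gives $Q\otimes Q\leq\Gamma(Q)$. The reverse inequality is where the hypothesis $\ker\Omega^{*}=\{0\}$ enters. Taking adjoints in \eqref{spec} yields $\Omega^{*}\Gamma(\sigma^{*})=\sigma^{*}\otimes\sigma^{*}$. If $\xi\in\ker\sigma^{*}$, then $(\sigma^{*}\otimes\sigma^{*})(\xi\otimes\zeta)=0=(\sigma^{*}\otimes\sigma^{*})(\zeta\otimes\xi)$ for every $\zeta\in\h$, so $\Omega^{*}\Gamma(\sigma^{*})$ annihilates both $\xi\otimes\zeta$ and $\zeta\otimes\xi$; since $\ker\Omega^{*}=\{0\}$ we may cancel $\Omega^{*}$ and conclude $\xi\otimes\zeta,\ \zeta\otimes\xi\in\ker\Gamma(\sigma^{*})$. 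In projection terms this reads $P\otimes I\leq\Gamma(P)$ and $I\otimes P\leq\Gamma(P)$, hence $\Gamma(P)\geq(P\otimes I)\vee(I\otimes P)=I-Q\otimes Q$ (the two projections commute, so their join complements $Q\otimes Q$), which rearranges to $\Gamma(Q)\leq Q\otimes Q$.

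Combining the two inequalities gives the equality $\Gamma(Q)=Q\otimes Q$, with $Q\neq0$ because $\sigma\neq0$. At this point I would invoke the characterization $\{T\in VN(G):\Gamma(T)=T\otimes T,\ T\neq0\}=\lambda(G)$ recorded just after the definition of $G_{\mathbb C,\lambda}$: it forces $Q=\lambda(s)$ for some $s\in G$. But $Q$ is a projection, so $\lambda(s)=\lambda(s)^{2}=\lambda(s^{2})$, and injectivity of $\lambda$ gives $s=e$; thus $Q=\lambda(e)=I$, i.e. $P=0$ and $\ker\sigma^{*}=\{0\}$.

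The step I expect to be delicate is the support-projection bookkeeping rather than any hard analysis: one must be careful that $\Gamma$, being normal and multiplicative, sends the kernel/range projection of $\sigma^{*}$ to that of $\Gamma(\sigma^{*})$, and that $(P\otimes I)\vee(I\otimes P)$ is exactly $I-Q\otimes Q$. It is worth emphasising that the two inequalities come from different sources: $Q\otimes Q\leq\Gamma(Q)$ is automatic from \eqref{spec}, while $\Gamma(Q)\leq Q\otimes Q$ is the only place the hypothesis $\ker\Omega^{*}=\{0\}$ is used, and it is precisely the resulting equality $\Gamma(Q)=Q\otimes Q$ that makes the group-like characterization applicable and pins $Q$ down to $I$.
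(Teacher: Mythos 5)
Your argument is correct and is essentially the paper's own proof: both reduce the claim to showing that the range projection of $\sigma$ is a nonzero group-like projection via $\ker\Omega^{*}=\{0\}$, and then invoke the characterization $\{T\in VN(G):\Gamma(T)=T\otimes T,\ T\neq0\}=\lambda(G)$ to force it to be $I$. Your bookkeeping is in fact slightly more careful than the paper's, which states the projection identity as $P\otimes P=\Gamma(P)$ for the kernel projection where the precise form is $\Gamma(Q)=Q\otimes Q$ for the range projection $Q=I-P$, exactly as you derive it.
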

\begin{proof}
 Let $\sigma\in \spec\, A(G,\omega).$ As $\ker\; \Omega^{*}=0,$ we have
$$
\ker\; \sigma^{*}\otimes\sigma^{*}=\ker\;\Omega^{*}\Gamma(\sigma^{*})=\ker\; \Gamma(\sigma^{*}).
$$
Thus, if we let $P$  denote the projection onto $\ker \sigma^{*},$ then $P\in VN(G)$ and $P$ satisfies
\begin{equation}\label{P}
P\otimes P=\Gamma(P).
\end{equation} 
By~\cite[Chapter 11, \theorem~16]{takesaki},  either $P=0$ or $P=\lambda(s)$ for some $s\in G$; the latter is possible only if $P=\lambda(e)=I$ and hence $\sigma=0$ contradicting $\sigma\in \spec\, A(G,\omega).$ Therefore $P=0$, i.e. $\ker\sigma^*=\{0\}$.
\end{proof}

Here comes the main result of this section that establishes a connection between (a part of) the spectrum $ \spec\, A(G,\omega)$ and $G_{\mathbb C,\lambda}$. 

\begin{thm}\label{mainthm}
Let $\omega\in VN(G)$ be a weight inverse on the dual of $G$ and let $\sigma\in \spec\, A(G,\omega)$ be such that $\sigma^{*}(\h)\cap\omega^{*}(\h)\ne\{0\}$. Assume $\ker \;\Omega^{*}=\{0\}.$ Then
\begin{enumerate}[$(i)$]
\item $\sigma$ is a weight inverse,
\item $S(\sigma)\sigma=S(\omega)\omega,$ 
\item the linear operator
$$
\begin{array}{ccc}
T_\sigma: \omega\xi\mapsto\sigma\xi,& \text{for $\xi\in \h,$}
\end{array}
$$
is closable with the closure  in $G_{\mathbb{C},\lambda}.$
\end{enumerate}
\end{thm}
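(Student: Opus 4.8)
The plan is to build on the machinery already assembled: parts $(i)$ and $(ii)$ are essentially proved by the preceding results, so the real work is in part $(iii)$. For $(ii)$, I would simply invoke \proposition~\ref{thm1}, whose hypothesis $\sigma^{*}(\h)\cap\omega^{*}(\h)\ne\{0\}$ is exactly what we assume; this gives $S(\sigma)\sigma=S(\omega)\omega$ immediately. For $(i)$, the condition \eqref{iw} was already verified for every $\sigma\in\spec\,A(G,\omega)$ in the discussion following \eqref{spec}. What remains is \eqref{kernel}: the equality $\ker\sigma=\{0\}$ follows from \corollary~\ref{trivialkernel} (again using the nonzero-intersection hypothesis), and $\ker\sigma^{*}=\{0\}$ follows from \lemma~\ref{trivcoker}, which is precisely where the assumption $\ker\,\Omega^{*}=\{0\}$ is consumed. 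So $(i)$ and $(ii)$ should be dispatched in a few lines by citing these three earlier statements.

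For part $(iii)$, I would first make sense of $T_\sigma$ as a densely defined operator. Since $\omega$ is injective with dense range (by \eqref{kernel} applied to $\omega$, together with $\ker\omega^{*}=\{0\}$ forcing $\overline{\operatorname{Ran}\omega}=\h$), the domain $\omega(\h)$ is dense, and $T_\sigma$ is well defined by injectivity of $\omega$. The natural strategy is to show $T_\sigma$ is closable and that its closure $\overline{T_\sigma}$ is affiliated with $VN(G)$ and satisfies $\Gamma(\overline{T_\sigma})=\overline{T_\sigma}\otimes\overline{T_\sigma}$, which places it in $G_{\mathbb{C},\lambda}$. The affiliation I would get from the fact that $T_\sigma$ commutes (in the appropriate graph sense) with every unitary in $VN(G)'$: both $\sigma$ and $\omega$ lie in $VN(G)$, so for a unitary $U'\in VN(G)'$ one has $U'\omega\xi=\omega U'\xi$ and $U'\sigma\xi=\sigma U'\xi$, whence $U'T_\sigma\subset T_\sigma U'$. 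Closability should follow because the candidate adjoint is densely defined; concretely I expect $T_\sigma^{*}$ to contain the analogously defined operator $\omega^{*}\eta\mapsto\sigma^{*}\eta$, whose domain $\omega^{*}(\h)$ is dense by $\ker\omega=\{0\}$.

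To identify the closure as an element of $G_{\mathbb{C},\lambda}$, I would exploit the polar-decomposition bookkeeping from Section~3 together with the defining relations. Writing $T_\sigma=\sigma\omega^{-1}$ heuristically, the key identity is that $\sigma$ and $\omega$ satisfy the \emph{same} twisted equation $\Gamma(\cdot)\Omega=(\cdot)\otimes(\cdot)$, so on the dense domain $\omega(\h)\odot\omega(\h)$ one should get
\[
\Gamma(T_\sigma)(\omega\otimes\omega)=\Gamma(\sigma)=(\sigma\otimes\sigma)\Omega^{-1}=(T_\sigma\otimes T_\sigma)(\omega\otimes\omega)\Omega^{-1},
\]
and since $\Gamma(\omega)\Omega=\omega\otimes\omega$ these collapse to $\Gamma(T_\sigma)=T_\sigma\otimes T_\sigma$ after taking closures. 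The honest version of this must avoid dividing by the unbounded $\omega^{-1}$ and instead work with the vectors $W$, $\Omega$ and the multiplicative-unitary relation \eqref{gamma} applied to unbounded affiliated operators as set up in Section~3.2, verifying the relation first on a core and then extending.

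The hard part will be the rigorous closability-and-affiliation argument together with passing the relation $\Gamma(T_\sigma)=T_\sigma\otimes T_\sigma$ to the closure: one is manipulating unbounded operators $\sigma\omega^{-1}$, and the equalities that are transparent on vectors of the form $\omega\xi$ must be shown to persist on the graph closure, where $\Gamma$ is defined by $W^{*}(1\otimes\overline{T_\sigma})W$ on a possibly subtle domain. I expect the decisive input to be part $(ii)$: the identity $S(\sigma)\sigma=S(\omega)\omega$ controls $|T_\sigma|$ (morally $T_\sigma^{*}T_\sigma=\overline{\omega^{-1}}\,S(\omega)\omega\,\omega^{-1}$ reduces to something $VN(G)$-affiliated), which is what makes the polar part of $\overline{T_\sigma}$ well behaved and affiliated, and hence lets \proposition~\ref{complexi} and \proposition~\ref{bijection} apply to land $\overline{T_\sigma}$ in $G_{\mathbb{C},\lambda}$.
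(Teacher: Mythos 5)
Parts $(i)$ and $(ii)$ you handle exactly as the paper does: cite \proposition~\ref{thm1}, \corollary~\ref{trivialkernel} and \lemma~\ref{trivcoker}, plus the inequality $\sigma\sigma^*\otimes\sigma\sigma^*\leq\Gamma(\sigma\sigma^*)$ already noted after \eqref{spec}. Your affiliation argument for $T_\sigma$ (commutation with unitaries in the commutant on the domain $\omega(\h)$) is also the paper's. The problem is the closability step, which is where the real content of $(iii)$ lies, and your proposed route for it fails. You claim the candidate adjoint of $T_\sigma$ contains the operator $\omega^*\eta\mapsto\sigma^*\eta$ with dense domain $\omega^*(\h)$. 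It does not: unwinding $\langle T_\sigma\omega\xi,\eta\rangle=\langle\omega\xi,T_\sigma^*\eta\rangle$ shows that $\eta\in\mathcal D(T_\sigma^*)$ precisely when $\sigma^*\eta\in\omega^*(\h)$, and then $T_\sigma^*\eta=(\omega^*)^{-1}\sigma^*\eta$; the operator $\omega^*\eta\mapsto\sigma^*\eta$ would sit inside $T_\sigma^*$ only if $\omega$ and $\sigma$ commuted. The density of $\{\eta\;|\;\sigma^*\eta\in\omega^*(\h)\}$ is exactly the nontrivial assertion (the hypothesis $\sigma^*(\h)\cap\omega^*(\h)\ne\{0\}$ only says this set is nonzero), so you cannot get closability for free. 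The paper's proof instead takes $\omega\xi_n\to 0$, $\sigma\xi_n\to y$, and uses part $(ii)$ to write $\langle y,S(\sigma^*)\xi\rangle=\lim\langle S(\sigma)\sigma\xi_n,\xi\rangle=\lim\langle S(\omega)\omega\xi_n,\xi\rangle=0$, then concludes $y=0$ from the density of $S(\sigma^*)\h$ (which follows from $\ker\sigma^*=\{0\}$). You correctly sense that $(ii)$ is the decisive input, but you deploy it for the polar decomposition rather than where it is actually needed.

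The verification of $\Gamma(T_\sigma)=T_\sigma\otimes T_\sigma$ is also left essentially heuristic: your chain of equalities inverts $\Omega$, which is injective but not assumed invertible, whereas the paper avoids this by computing $\Gamma(T_\sigma)\Gamma(\omega)\Omega=\Gamma(\sigma)\Omega=\sigma\otimes\sigma=(T_\sigma\otimes T_\sigma)(\omega\otimes\omega)$ on $\h\odot\h$, with no inversion. Moreover, that computation only yields $T_\sigma\otimes T_\sigma\subset\Gamma(T_\sigma)$; the reverse inclusion requires showing that $\Gamma(\omega)\Omega(\h\odot\h)$ is a core for $\Gamma(T_\sigma)$, which the paper does by first noting $\Gamma(\omega)(\h\otimes\h)$ is a core and then using density of $\Omega(\h\odot\h)$ in $\h\otimes\h$ to approximate in graph norm. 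This core argument is the second substantive piece missing from your sketch.
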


\begin{proof}
$(i)$   follows from \corollary~\ref{trivialkernel} and \lemma~\ref{trivcoker},  together with the earlier observation that $\sigma\sigma^{*}\otimes\sigma\sigma^{*}\leq \Gamma(\sigma\sigma^{*}).$ 

$(ii)$   follows from \proposition~\ref{thm1}.

$(iii)$
The operator $T_\sigma$ is well-defined as $\ker\omega=\{0\}$. Let $\xi_n\in \h$ be such that $\omega\xi_n\to 0$ and $\sigma\xi_n\to y$. Then for any $\xi\in \h$,
\begin{eqnarray*}
\langle y, S(\sigma^*)\xi\rangle=\lim_{n\to\infty}\langle S(\sigma)\sigma\xi_n,\xi\rangle=\lim_{n\to\infty}\langle S(\omega)\omega\xi_n,\xi\rangle=0.
\end{eqnarray*}
Therefore $y\perp \overline{S(\sigma^*)(\h)}$.  By \lemma ~\ref{trivcoker},  $\ker\sigma^*=\{0\}$.  This yields $\ker S(\sigma)=\{0\}$ (since if $P$ is the range projection for $A\in VN(G),$ then $S(P)$ is the range projection for $S(A^{*})$).  Therefore
$S(\sigma^*)\h$ is dense in $\h$ and hence $y=0$. Consequently,  $T_\sigma$ is closable.

Write $T_\sigma$ also for the closure. Then $T_{\sigma}$ is affiliated with $VN(G)$, and even more,  it is affiliated with the von Neumann algebra ${\mathcal N}(\omega,\sigma)$ generated by $\omega$ and $\sigma.$ In fact, let $V\in {\mathcal N}(\omega,\sigma)'$ be a unitary. Then for any $\xi\in \h$ of the form $\xi=\omega \eta,$ we have $$V T_{\sigma}\xi=V\sigma \eta=\sigma V\eta=T_{\sigma}\omega V\eta=T_{\sigma}V \xi$$
and hence $V^{*}T_{\sigma}V=T_{\sigma},$ showing the statement.

The only claim left to prove is that $\Gamma(T_\sigma)=T_\sigma\otimes T_\sigma$.
Observe first that
$$
(\omega\otimes \omega)(\h\otimes \h)=\Gamma(\omega) \Omega(\h\otimes \h)\subseteq
$$
$$
\subseteq \Gamma(\omega) (\h\otimes \h)\subseteq\mathcal D( \Gamma(T_{\sigma})),
$$
and
$$
\Gamma(T_{\sigma})\Gamma(\omega) \Omega(\h\odot\h)=\Gamma(T_{\sigma}\omega) \Omega(\h\odot\h)=\Gamma(\sigma) \Omega(\h\odot\h)=
$$
$$
=(\sigma\otimes \sigma)(\h\odot \h)=(T_{\sigma}\otimes T_{\sigma})(\omega\otimes \omega)(\h\odot \h).
$$
 We have
$$T_\sigma\otimes T_\sigma|_{\omega(\h)\odot\omega(\h)}=\Gamma(T_\sigma)|_{\omega(\h)\odot\omega(\h)}.$$
By convention, $T_\sigma\otimes T_\sigma$ is the closure of the operator $T_\sigma\odot T_\sigma$ defined on $\mathcal D(T_\sigma)\odot\mathcal D(T_\sigma)$ or, equivalently, on $\omega(\h)\odot \omega(\h)$, as $\omega(\h)$ is a core of $T_\sigma$. Hence
$$\Gamma(T_\sigma)\supset T_\sigma\otimes T_\sigma.$$ 
To see the equality, we must prove that  $\overline{\Gamma(T_\sigma)|_{\omega(\h)\odot\omega(\h)}}=\Gamma(T_\sigma)$. To do this we note first that $\Gamma(\omega)(\h\otimes \h)$ is a core for $\Gamma(T_\sigma)$ and hence the linear subspace
\begin{equation}\label{set}
 \{(x,\;\Gamma(T_\sigma)x)\;|\; x\in \Gamma(\omega)(\h\otimes\h)\}
\end{equation}
is dense in the graph of $\Gamma(T_\sigma)$.
Therefore, it is enough to see that  the closure
of
$$\{(x,\Gamma(T_\sigma)x)\;|\; x\in \Gamma(\omega)\Omega(\h\odot\h)\}=\{(x,\;\Gamma(T_\sigma)x)\;|\; x\in \omega\h\odot \omega\h)\}
$$
contains (\ref{set}).

As $\Omega(\h\odot \h)$ is dense in $\h\otimes\h$, we have that for any $\Gamma(\omega)\xi$, $\xi\in\h\otimes \h$, there exists $(\xi_n)_n\subset \h\odot \h$ such that $\Omega\xi_n\to \xi$ and hence $\Gamma(\omega)\Omega\xi_n\to\Gamma(\omega)\xi$. Moreover,
$$\Gamma(T_\sigma)\Gamma(\omega)\Omega\xi_n=\Gamma(\sigma)\Omega\xi_n\to\Gamma(\sigma)\xi=\Gamma(T_\sigma)\Gamma(\omega)\xi,$$
showing  the claim.
\end{proof} 
We remark that 
\begin{equation}\label{condsigma}
\sigma^{*}(\h)\cap\omega^{*}(\h)\ne\{0\}
\end{equation}
 for $\sigma\in \spec\, A(G,\omega)$ means that the domain $\mathcal D(T_\sigma^*)$ of the operator $T_\sigma^*=(\sigma\omega^{-1})^*=(\omega^*)^{-1}\sigma^*$ is not zero.  The theorem says that in this case $\mathcal D(T_\sigma^*)$ is large enough to be dense in $\h$, as the latter is equivalent to the closability of $T_\sigma$.

In what follows we shall use the notation $T_\sigma$ for the closed operator $\bar T_\sigma$ when there is no risk of confusion. 

We derive now a number of  consequences from the previous theorem. We assume that $\ker\,\Omega^*=\{0\}$.  

\begin{cor}\label{isomorph}
For $\sigma\in \spec\, A(G,\omega)$ as  in \theorem~\ref{mainthm}, there is a natural isometric isomorphism
$$A( G,\sigma)\cong A(G,\omega),$$
$$\sigma f\mapsto \omega f.$$
\end{cor}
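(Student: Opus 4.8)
The plan is to reduce the claim to the cocycle-twisted model $(A(G),\cdot_\Omega)$ of Section~\ref{section2}. By part $(i)$ of \theorem~\ref{mainthm}, $\sigma$ is itself a weight inverse, so $A(G,\sigma)=\sigma A(G)$ is a genuine Beurling--Fourier algebra carrying the norm $\|\sigma f\|_\sigma=\|f\|_{A(G)}$. I would first verify the easy structural properties of the map $\Phi:\sigma f\mapsto\omega f$: it is well defined and injective because $\ker\sigma=\ker\omega=\{0\}$ (so $\sigma(f-g)=0$ forces $f=g$, exactly as in the proof that $\|\cdot\|_\omega$ is a norm), it is plainly surjective and linear, and it is isometric since $\|\sigma f\|_\sigma=\|f\|_{A(G)}=\|\omega f\|_\omega$.

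The real content is multiplicativity, and the crux will be to show that $\sigma$ and $\omega$ carry the \emph{same} $2$-cocycle. I would argue as follows. Let $\Omega_\sigma$ be the cocycle attached to the weight inverse $\sigma$ by \lemma~\ref{lem1}, so that $\sigma\otimes\sigma=\Gamma(\sigma)\Omega_\sigma$. Since $\sigma\in\spec\,A(G,\omega)$, the relation~\eqref{spec} also gives $\sigma\otimes\sigma=\Gamma(\sigma)\Omega$, where $\Omega$ is the cocycle of $\omega$. Subtracting yields $\Gamma(\sigma)(\Omega-\Omega_\sigma)=0$, and because $\sigma$ is a weight inverse the remark following the definition of a weight inverse gives $\ker\Gamma(\sigma)=\{0\}$; hence $\Omega_\sigma=\Omega$.

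Granting this coincidence, the conclusion becomes formal. As recorded after \proposition~\ref{2weights}, the assignment $f\mapsto\omega f$ is an isometric isomorphism of $(A(G),\cdot_\Omega)$ onto $A(G,\omega)$, and likewise $f\mapsto\sigma f$ is an isometric isomorphism of $(A(G),\cdot_{\Omega_\sigma})=(A(G),\cdot_\Omega)$ onto $A(G,\sigma)$; both statements rest on the product identity~\eqref{product} and on the common cocycle $\Omega$. Composing the inverse of the second isomorphism with the first presents $\Phi$ as $\sigma f\mapsto f\mapsto\omega f$, exhibiting it as an isometric algebra isomorphism $A(G,\sigma)\cong A(G,\omega)$. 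The only genuine obstacle is the cocycle identity $\Omega_\sigma=\Omega$; once the injectivity of $\Gamma(\sigma)$ is secured it drops out at once, and everything else is bookkeeping within the already-established $\cdot_\Omega$ description.
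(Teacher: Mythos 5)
Your proposal is correct and follows essentially the same route as the paper: both arguments come down to the observation that the product on $A(G,\sigma)$ is governed by the \emph{same} $2$-cocycle $\Omega$ as that on $A(G,\omega)$, since $\sigma\otimes\sigma=\Gamma(\sigma)\Omega$ by~\eqref{spec}, after which the isometry and multiplicativity of $\sigma f\mapsto\omega f$ are immediate from the definitions. Your extra step identifying $\Omega_\sigma=\Omega$ via $\ker\Gamma(\sigma)=\{0\}$ is a valid (if slightly more formal) packaging of what the paper states directly as $(\sigma g)(\sigma h)=\sigma\,\Gamma_*(\Omega(g\otimes h))$.
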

\begin{proof}
This is immediate from the definitions of the norm and product on the corresponding  spaces:
$$
\begin{array}{ccc}
||\omega f||_{\omega}=||f||=||\sigma f||_{\sigma}, &\text{for $ f\in A(G),$}
\end{array}
$$
$$
\begin{array}{ccc}
(\omega g)(\omega h)=\omega \Gamma_{*}(\Omega(g\otimes h)), & (\sigma g)(\sigma h)=\sigma \Gamma_{*}(\Omega(g\otimes h)), & \text{for $g,h\in A(G).$}
\end{array}
$$
\end{proof}
We remark that the above corollary is also clear from the discussion after the proof of \proposition~\ \ref{2weights}. 

\begin{cor}\label{spectrum}
For $\sigma\in \spec\, A(G,\omega)$  we have  $\sigma^{*}(\h)\cap\omega^{*}(\h)\ne\{0\}$  if and only if 
$\sigma=T\omega$ for $T\in G_{\mathbb C,\lambda}$ such that $\omega(\h)\subset \mathcal D(T)$.
Consequently, if $\sigma^{*}(\h)\cap\omega^{*}(\h)\ne\{0\}$ for any $\sigma\in \spec\, A(G,\omega)$ then 
$$\spec\, A(G,\omega)\subset \{T\omega\;|\;T\in G_{\mathbb C,\lambda}, \omega(\h)\subset \mathcal D(T)\}.$$
\end{cor}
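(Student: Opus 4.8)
The plan is to establish the stated equivalence by two short arguments and then read off the displayed inclusion by applying one of them uniformly. Since the corollary assumes $\ker\Omega^{*}=\{0\}$, \theorem~\ref{mainthm} is available throughout. For the forward implication, suppose $\sigma\in\spec\,A(G,\omega)$ satisfies $\sigma^{*}(\h)\cap\omega^{*}(\h)\ne\{0\}$. Here I would simply appeal to \theorem~\ref{mainthm}$(iii)$: the operator $T_\sigma:\omega\xi\mapsto\sigma\xi$ is closable and its closure $T:=\bar T_\sigma$ belongs to $G_{\mathbb C,\lambda}$. By construction $\omega(\h)\subseteq\mathcal D(T)$ and $T\omega\xi=\sigma\xi$ for all $\xi\in\h$, so $\sigma=T\omega$ with $T\in G_{\mathbb C,\lambda}$ and $\omega(\h)\subset\mathcal D(T)$. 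Thus $\sigma$ lies in the set on the right, and this direction is essentially a restatement of the theorem.

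For the reverse implication, assume $\sigma\in\spec\,A(G,\omega)$ is of the form $\sigma=T\omega$ with $T\in G_{\mathbb C,\lambda}$ and $\omega(\h)\subset\mathcal D(T)$; I want to exhibit a nonzero vector in $\sigma^{*}(\h)\cap\omega^{*}(\h)$. The idea is to pass to adjoints. Since $T\in G_{\mathbb C,\lambda}$ is closed and densely defined (being affiliated with $VN(G)$), its adjoint $T^{*}$ is densely defined, and the standard inclusion for adjoints of a product yields $\omega^{*}T^{*}\subseteq(T\omega)^{*}=\sigma^{*}$, the last equality holding because $\sigma=T\omega\in VN(G)$ is bounded and everywhere defined. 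Consequently, for every $\eta\in\mathcal D(T^{*})$ we obtain $\sigma^{*}\eta=\omega^{*}(T^{*}\eta)\in\sigma^{*}(\h)\cap\omega^{*}(\h)$. To finish I only need some such $\eta$ with $\sigma^{*}\eta\ne0$; this exists because $\sigma\ne0$ gives $\sigma^{*}\ne0$, and a nonzero bounded operator cannot vanish on the dense subspace $\mathcal D(T^{*})$.

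The displayed inclusion is then immediate: if $\sigma^{*}(\h)\cap\omega^{*}(\h)\ne\{0\}$ holds for every $\sigma\in\spec\,A(G,\omega)$, the forward implication writes each such $\sigma$ as $T\omega$ with $T\in G_{\mathbb C,\lambda}$ and $\omega(\h)\subset\mathcal D(T)$, which is precisely the claimed containment. I do not anticipate any serious difficulty: the forward direction is a direct appeal to \theorem~\ref{mainthm}, and the only place deserving care is the adjoint bookkeeping in the reverse direction—namely that $\mathcal D(T^{*})$ is dense (so that the bounded $\sigma^{*}$ cannot be annihilated on it) and that $(T\omega)^{*}=\sigma^{*}$—but both are routine consequences of $T$ being closed and densely defined together with the boundedness of $\sigma$.
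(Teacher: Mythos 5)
Your proof is correct and follows the same route as the paper: the forward direction is a direct appeal to \theorem~\ref{mainthm}$(iii)$, and the reverse direction uses the adjoint inclusion $\omega^{*}T^{*}\subseteq(T\omega)^{*}=\sigma^{*}$, which is exactly the paper's one-line argument. Your added bookkeeping (density of $\mathcal D(T^{*})$ and the fact that the nonzero bounded operator $\sigma^{*}$ cannot vanish there) correctly fills in the detail the paper leaves implicit.
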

\begin{proof}
The "only if" part follows from \theorem ~\ref{mainthm}. If $\sigma=T\omega$ for $T\in G_{\mathbb C,\lambda}$ then $\sigma^*\supset \omega^*T^*$ giving the "if" part. 
\end{proof}

\begin{rem}\label{vnGomega} \rm
In \cite{gllst} the dual $A(G,\omega)^*$ is identified with the weighted space $VN(G,\omega)$ given by $$VN(G,\omega):=\{A\omega^{-1}\;|\; A\in VN(G)\}$$
with the norm
$\|A\omega^{-1}\|_{VN(G,\omega)}=\|A\|$ via
$$( A\omega^{-1},\omega u):=( A,u ).$$
Then the spectrum  of $A(G,\omega)$ is considered as a subset of $VN(G,\omega)$ instead of $VN(G)$. Clearly we have the isometry $\Phi: VN(G)\to VN(G,\omega)$, $A\mapsto A\omega^{-1}$. With this identification we have that if $\sigma^{*}(\h)\cap\omega^{*}(\h)\ne\{0\}$ for any $\sigma\in \spec\, A(G,\omega)$, then  
\begin{equation}\label{inclusion}
\spec\, A(G,\omega)\simeq\{T\in G_{\mathbb C,\lambda}|\;  T\omega\in VN(G)\}\subset G_{\mathbb C,\lambda}.
\end{equation}
\end{rem}
Next, we prove a 'partial converse' of \theorem~\ref{mainthm}, which shows that  every element in $G_{\mathbb{C},\lambda}$ can be seen  as coming from a weight inverse.
\begin{prop}\label{weight_comp}
If $T\in G_{\mathbb{C},\lambda}$ then there exists a weight inverse $\omega\in VN(G)$ and $\sigma\in \spec\, A(G,\omega)$ such that $T=T_{\sigma}.$
\end{prop}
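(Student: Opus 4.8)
The plan is to reduce to the positive part of $T$ and then manufacture $\omega$ by functional calculus. By \proposition~\ref{complexi} I may write $T=\lambda(s)\exp(i\alpha)$ with $s\in G$ and $\alpha\in\Lambda$, so that $U:=\lambda(s)$ is the unitary part and $|T|=\exp(i\alpha)$. I put $B:=i\alpha$, a selfadjoint operator affiliated with $VN(G)$ whose coproduct is additive, $\Gamma(B)=B\otimes 1+1\otimes B$ (this is just the defining relation of $\Lambda$ multiplied by $i$). Every operator I will use is a Borel function of $B$, hence affiliated with $VN(G)$, and since $\Gamma$ is a normal $*$-homomorphism one has $\Gamma(\phi(B))=\phi(\Gamma(B))=\phi(B\otimes 1+1\otimes B)$ for every Borel $\phi$.

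The weight inverse I would take is $\omega:=\exp(-|B|)$. It is a positive contraction in $VN(G)$ and is injective, so \eqref{kernel} holds. To see \eqref{iw}, I note that in the joint functional calculus of the strongly commuting pair $(B\otimes 1,\,1\otimes B)$ the operator $\omega\omega^*\otimes\omega\omega^*$ is $e^{-2(|x|+|y|)}$ while $\Gamma(\omega\omega^*)$ is $e^{-2|x+y|}$, so the triangle inequality $|x+y|\le|x|+|y|$ yields $\omega\omega^*\otimes\omega\omega^*\le\Gamma(\omega\omega^*)$. Thus $\omega$ is a weight inverse and \lemma~\ref{lem1} furnishes the associated contractive $2$-cocycle $\Omega$.

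Next I set $\sigma:=T\omega=\lambda(s)\exp(B)\exp(-|B|)$. Because the function $t\mapsto e^{t-|t|}$ is bounded by $1$ and strictly positive, $\sigma$ is an injective contraction in $VN(G)$. That $\sigma\in\spec\,A(G,\omega)$ is the computation $\sigma\otimes\sigma=(T\otimes T)(\omega\otimes\omega)=\Gamma(T)\Gamma(\omega)\Omega=\Gamma(T\omega)\Omega=\Gamma(\sigma)\Omega$, where I use $\Gamma(T)=T\otimes T$, the defining relation \eqref{weight} of $\Omega$, and the multiplicativity $\Gamma(T)\Gamma(\omega)=\Gamma(T\omega)$ coming from $\Gamma(X)=W^*(1\otimes X)W$; the only point needing care is that these products of unbounded operators first be read on the algebraic tensor product $\omega(\h)\odot\omega(\h)$, where the left side is literally $\sigma\xi\otimes\sigma\eta$, and then extended by closure.

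Finally I must check $T=T_\sigma$. By construction $\sigma\xi=T\omega\xi=T(\omega\xi)$, so the map $\omega\xi\mapsto\sigma\xi$ of \theorem~\ref{mainthm} is exactly the restriction $T|_{\omega(\h)}$, which is automatically closable with closure contained in the closed operator $T$. To get equality it suffices to show $\omega(\h)$ is a core for $T$: since $\exp(-|B|)$ is bounded with bounded inverse on each spectral subspace $E_B([-n,n])\h$, one has $\omega(\h)\supseteq\bigcup_nE_B([-n,n])\h$, and the latter is a core for $|T|=\exp(B)$, hence for $T=U|T|$ as $U$ is unitary; therefore $\overline{T_\sigma}=T$. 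I expect the only genuinely delicate point to be this core/closure bookkeeping together with the domain justification of the $\spec$ identity; the choice $\omega=\exp(-|B|)$, as opposed to the naive $|T|^{-1}=\exp(-B)$ (which is unbounded when $B$ is unbounded below), is precisely what makes $\omega$ simultaneously bounded and a genuine weight inverse.
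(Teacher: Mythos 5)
Your proof is correct and follows essentially the same route as the paper: decompose $T=\lambda(s)\exp(i\alpha)$, take $\omega$ to be a rapidly decaying positive function of $\ln|T|$ (the paper uses $e^{-2|x|}$ where you use $e^{-|x|}$; both work), set $\sigma=T\omega$, verify $\Gamma(\sigma)\Omega=\sigma\otimes\sigma$ by joint functional calculus, and check that $\omega(\h)$ is a core for $T$. The only cosmetic difference is that the paper organizes the functional-calculus bookkeeping through a coproduct-intertwining homomorphism $C_b(\mathbb{R})\to VN(G)$, while you invoke $\Gamma(\phi(B))=\phi(B\otimes 1+1\otimes B)$ directly.
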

\begin{proof} 
Let $T\in G_{\mathbb{C},\lambda}$ and $U |T|$ be its polar decomposition. Then $U=\lambda(s)$ for some $s\in G$ and $\Gamma(|T|)=|T|\otimes |T|. $ Hence $\Gamma(|T|^{i t})=|T|^{it}\otimes |T|^{it}$ and $ |T|^{it}$ determines  a strongly continuous representation $\psi: \mathbb{R}\to \lambda(G)\subseteq B(\h)$ by setting $\psi(t)=|T|^{it}.$ By the standard theory, the map
$$
\begin{array}{ccc}
\hat{f}(x)={\displaystyle \int_{\mathbb{R}}} f(t)e^{i x t}\;dt\;\mapsto\; {\displaystyle \int_{\mathbb{R}}}f(t) \psi(t)\; dt\in VN(G), &\text{for $ f\in L^{1}(\mathbb{R}),$}
\end{array}
$$
extends to a $*$-homomorphism $\varphi:C^{*}(\mathbb{R})\cong C_{0}(\mathbb{R})\to VN(G);$ we have $\varphi(f)=f(\ln(|T|))$. 
The image of $C_{0}(\mathbb{R})$ is clearly non-degenerate, and hence we can extend $\varphi$ in a unique way to a homomorphism $\overline{\varphi}:C_{b}(\mathbb{R})\to VN(G).$ If we let $\overline{\varphi\otimes\varphi}$ denote the extension of the map $\varphi\otimes\varphi:C_{0}(\mathbb{R})\otimes C_{0}(\mathbb{R})\to VN(G)\bar\otimes VN(G)$ to $C_{b}(\mathbb{R}\times\mathbb{R}),$ then it is easy to see from the uniqueness of the extensions that the diagram
\begin{equation}\label{comdia}
\begin{xy}\xymatrixcolsep{4pc}
\xymatrix{
C_{b}(\mathbb{R}\times \mathbb{R})\ar[r]^{\overline{\varphi\otimes\varphi}} & VN(G)\bar\otimes VN(G)\\
C_{b}(\mathbb{R}) \ar[r]^{\overline{\varphi}}\ar[u]^{\Gamma_{\mathbb{R}}}  &VN(G)\ar[u]^{\Gamma}
}\end{xy}
\end{equation}
is commutative; here we write $\Gamma_{\mathbb{R}}$ for the restriction of the coproduct to $C_{b}(\mathbb{R})$. 

Now if we let $\omega=\varphi(e^{-2|x|})$, then $\omega^{2}\otimes\omega^{2}\leq \Gamma(\omega^{2})$, and the non-degeneracy of $\varphi$ gives $\ker\;\omega=\{0\}.$ Thus $\omega$ is a weight inverse in $VN(G).$ Moreover, the $2$-cocycle associated to $\omega$ is given by \begin{equation}\label{line2co}\Omega=\overline{\varphi\otimes\varphi}(e^{2|x+y|-2|x|-2|y|}).\end{equation}
If we let $\sigma=\lambda(s)\varphi(e^{x-2|x|}),$ then it is easy to see from~\eqref{line2co} that $\Gamma(\sigma)\Omega=\sigma\otimes\sigma$ and hence $\sigma\in \spec\, A(G,\omega).$ Moreover, the closure of the unbounded operator 
$$
\begin{array}{ccc}
\omega \xi=e^{-2|\ln |T||}\xi\mapsto \sigma \xi=\lambda(s)e^{\ln |T|-2|\ln |T||}\xi, & \text{for $\xi\in\h,$}
\end{array}
$$
is given by $T.$ 
\end{proof}

Next we derive some further properties of $\spec\, A(G,\omega)\cap G_{\mathbb C,\lambda}\omega$. 

\begin{lem}\label{polardec}
Let $\sigma\in \spec\, A(G,\omega)\cap G_{\mathbb C,\lambda}\omega$. Then there exists a unique $s\in G$ such that $\beta=\lambda(s)^{*}\sigma\in \spec\, A(G,\omega)\cap G_{\mathbb C,\lambda}\omega$   and
$$
|T_{\sigma}|=T_{\beta}.
$$
\end{lem}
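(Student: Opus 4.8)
The plan is to read everything off from the polar decomposition of the operator $T_\sigma$ attached to $\sigma$ by \theorem~\ref{mainthm}. Since $\sigma\in G_{\mathbb C,\lambda}\omega$, we may write $\sigma=T_\sigma\omega$ with $T_\sigma\in G_{\mathbb C,\lambda}$, and by \proposition~\ref{complexi} its polar decomposition has the form $T_\sigma=\lambda(s)|T_\sigma|$, where $s\in G$ and the positive part $|T_\sigma|$ again lies in $G_{\mathbb C,\lambda}\cap\overline{VN(G)}^+$; indeed the proof of \proposition~\ref{complexi} shows precisely that the unitary part of any element of $G_{\mathbb C,\lambda}$ is some $\lambda(s)$ and that $\Gamma(|T_\sigma|)=|T_\sigma|\otimes|T_\sigma|$. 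This $s$ is the candidate, and I would set $\beta:=\lambda(s)^{*}\sigma$.

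First I would check that $\beta\in\spec\,A(G,\omega)$. This is a direct computation with the cocycle relation: using $\Gamma(\lambda(s)^{*})=\lambda(s)^{*}\otimes\lambda(s)^{*}$ together with $\Gamma(\sigma)\Omega=\sigma\otimes\sigma$,
$$
\Gamma(\beta)\Omega=(\lambda(s)^{*}\otimes\lambda(s)^{*})\Gamma(\sigma)\Omega=(\lambda(s)^{*}\otimes\lambda(s)^{*})(\sigma\otimes\sigma)=\beta\otimes\beta.
$$
Next, applying $\lambda(s)^{*}$ to the identity $\sigma=\lambda(s)|T_\sigma|\omega$ (valid because $\omega(\h)\subset\mathcal D(T_\sigma)=\mathcal D(|T_\sigma|)$ and $T_\sigma=\lambda(s)|T_\sigma|$ on its domain) gives $\beta=|T_\sigma|\omega$. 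Since $|T_\sigma|\in G_{\mathbb C,\lambda}$ and $\omega(\h)\subset\mathcal D(|T_\sigma|)$, this exhibits $\beta\in G_{\mathbb C,\lambda}\omega$, so $\beta$ lies in the required intersection $\spec\,A(G,\omega)\cap G_{\mathbb C,\lambda}\omega$.

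The heart of the argument is the identification $T_\beta=|T_\sigma|$. By construction the map $T_\beta\colon\omega\xi\mapsto\beta\xi=|T_\sigma|\omega\xi$ is simply the restriction of $|T_\sigma|$ to $\omega(\h)$, so its closure is contained in $|T_\sigma|$. To obtain equality I would argue that $\omega(\h)$ is a core for $|T_\sigma|$: because $\lambda(s)$ is unitary one has $\|T_\sigma\eta\|=\||T_\sigma|\eta\|$ on the common domain, so the graph norms of $T_\sigma$ and $|T_\sigma|$ coincide, and $\omega(\h)$ is already a core for $T_\sigma$ by the proof of \theorem~\ref{mainthm}; hence it is a core for $|T_\sigma|$ as well. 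Therefore the closure of $T_\beta$ is exactly $|T_\sigma|$. I expect this core/graph-norm step to be the main (if mild) technical obstacle, since one must keep the domain bookkeeping straight when passing between $T_\sigma$, $|T_\sigma|$ and their restrictions to $\omega(\h)$.

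Finally, uniqueness of $s$ follows from uniqueness of the polar decomposition. As $T_\sigma\in G_{\mathbb C,\lambda}$ is injective (in particular $\ker\sigma=\{0\}$ by \theorem~\ref{mainthm}), the decomposition $T_\sigma=U|T_\sigma|$ determines the unitary part $U$ uniquely. If some $s'\in G$ also produced $\beta'=\lambda(s')^{*}\sigma$ with $T_{\beta'}=|T_\sigma|$ positive, then retracing the computation forces $T_\sigma=\lambda(s')|T_\sigma|$ on the core $\omega(\h)$, whence $\lambda(s')=U=\lambda(s)$, and so $s'=s$ by the injectivity of $s\mapsto\lambda(s)$.
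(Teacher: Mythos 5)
Your proposal is correct and follows essentially the same route as the paper: take the polar decomposition $T_\sigma=\lambda(s)|T_\sigma|$ (with $|T_\sigma|\in G_{\mathbb C,\lambda}$ by \proposition~\ref{complexi}), set $\beta=\lambda(s)^*\sigma$, and identify $T_\beta$ with $|T_\sigma|$ by observing that $\omega(\h)$ is a common core. Your explicit graph-norm argument for why $\omega(\h)$ is a core for $|T_\sigma|$ just spells out a step the paper leaves implicit.
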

\begin{proof}
Taking the polar decomposition $T_{\sigma}= U |T_{\sigma}|$, we conclude,  as in the proof of \proposition~\ref{complexi}, that $U=\lambda(s)$ for a unique $s\in G$. Clearlywe havw , $\beta=\lambda(s)^*\sigma\in \spec\, A(G,\omega),$ and  $\beta^{*}(\h)\cap\omega^*(\h)=\sigma^{*}(\h)\cap \omega^*(\h)\ne\{0\}$. It follows from  \theorem ~\ref{mainthm}  that the closure of  
$$
\{(\omega \xi,\; \beta \xi)\;|\; \xi\in \h\}\subseteq \h\oplus\h
$$
is the graph of  the positive operator $|T_{\sigma}|$; on the other hand the closure is  the graph of the closed operator  $T_{\beta}.$
\end{proof}
\begin{prop}\label{deform}
Let $\sigma\in \spec\, A(G,\omega)\cap G_{\mathbb C,\lambda}\omega$.  If $\sigma=\lambda(s)\beta$ is the decomposition from \lemma~\ref{polardec}. Then $\psi(t)=\lambda(s)T_{\beta}^{t}\omega$ is a continuous function 
$$
\begin{array}{ccc}
\psi:[0,1]\to \spec \;A(G,\omega)\cap G_{\mathbb C,\lambda}\omega, & \psi(0)=\lambda(s)\omega, & \psi(1)=\sigma.
\end{array}
$$
\end{prop}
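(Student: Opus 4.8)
The plan is to reduce everything to the functional calculus of the positive operator $T_\beta=|T_\sigma|$ together with the membership criterion of Corollary~\ref{spectrum}. By Lemma~\ref{polardec} we have $T_\beta\in G_{\mathbb C,\lambda}\cap\overline{VN(G)}^+$, so Proposition~\ref{bijection} gives a unique $\alpha\in\Lambda$ with $T_\beta=\exp(i\alpha)$; setting $A=i\alpha$ (self-adjoint, affiliated with $VN(G)$) one has $T_\beta=\exp A$ and $T_\beta^{t}=\exp(tA)=\exp(it\alpha)$ for $t\in[0,1]$. The proposition preceding Proposition~\ref{bijection} then shows $T_\beta^{t}\in G_{\mathbb C,\lambda}$, and it is positive, so by Proposition~\ref{complexi} the operator $S_t:=\lambda(s)T_\beta^{t}$ lies in $G_{\mathbb C,\lambda}$ and $\psi(t)=S_t\omega$. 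Thus the statement splits into three parts: that $\psi(t)$ is a well-defined element of $\spec A(G,\omega)\cap G_{\mathbb C,\lambda}\omega$, the boundary values, and the continuity of $t\mapsto\psi(t)$.

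For the first part I would show $\omega(\h)\subseteq\mathcal D(T_\beta^{t})$ with $T_\beta^{t}\omega$ bounded uniformly in $t$. Writing $E_A$ for the spectral measure of $A$ and using $T_\beta\omega=\beta$ (the closure from Theorem~\ref{mainthm}$(iii)$), the elementary inequality $e^{2tr}\le 1+e^{2r}$ valid for $t\in[0,1]$, $r\in\mathbb R$, gives $\|T_\beta^{t}\omega\xi\|^2=\int e^{2tr}\,d\langle E_A(r)\omega\xi,\omega\xi\rangle\le\|\omega\xi\|^2+\|\beta\xi\|^2$, whence $\|T_\beta^{t}\omega\|\le\sqrt{1+\|\beta\|^2}$ for all $t$. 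Since $S_t\in G_{\mathbb C,\lambda}$ and $\omega(\h)\subseteq\mathcal D(S_t)$, the ``if'' direction of Corollary~\ref{spectrum} yields $\psi(t)=S_t\omega\in\spec A(G,\omega)\cap G_{\mathbb C,\lambda}\omega$ (nonzero because $\ker\omega=\{0\}$). The boundary values are then immediate: $T_\beta^{0}=I$ gives $\psi(0)=\lambda(s)\omega$, and $T_\beta^{1}\omega=T_\beta\omega=\beta$ gives $\psi(1)=\lambda(s)\beta=\sigma$, using the decomposition of Lemma~\ref{polardec}.

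The remaining, and main, point is continuity. The Gelfand topology on $\spec A(G,\omega)$ is the relative weak$^\ast$ topology of $VN(G)=A(G)^\ast$, so I would prove weak$^\ast$ continuity of $t\mapsto\psi(t)$. Since $\{\psi(t):t\in[0,1]\}$ is norm-bounded by the estimate above, and on norm-bounded subsets of $VN(G)$ the weak$^\ast$ ($\sigma$-weak) topology coincides with the weak operator topology, it suffices to check that $t\mapsto\langle\psi(t)\xi,\eta\rangle$ is continuous for all $\xi,\eta\in\h$. Writing $\langle\psi(t)\xi,\eta\rangle=\int e^{tr}\,d\langle E_A(r)\omega\xi,\lambda(s)^{\ast}\eta\rangle$, continuity follows from dominated convergence: $e^{t_n r}\to e^{tr}$ pointwise and $|e^{t_n r}|\le 1+e^{r}$, the latter being square-integrable against $\langle E_A(\cdot)\omega\xi,\omega\xi\rangle$ by the bound just established, hence integrable against the complex spectral measure by Cauchy--Schwarz.

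The principal difficulty is precisely this continuity step, where the unboundedness of $A$ (equivalently of $T_\beta^{t}$) must be controlled: one needs \emph{simultaneously} that $\omega(\h)$ remains in $\mathcal D(T_\beta^{t})$ for every $t\in[0,1]$, that the resulting operators are uniformly bounded, and that the spectral integrals vary continuously in $t$. All three are supplied by the single convexity estimate $e^{2tr}\le 1+e^{2r}$ on $[0,1]$. An alternative I would keep in reserve is to view $z\mapsto\exp(zA)\omega$ as a bounded analytic $VN(G)$-valued function on the strip $0\le\re z\le 1$ (bounded by $\|\omega\|$ on $\re z=0$ and by $\|\beta\|$ on $\re z=1$) and invoke the three-lines theorem, but the elementary estimate is cleaner and avoids vector-valued analyticity.
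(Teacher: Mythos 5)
Your overall strategy is sound and, for the analytic parts, essentially the paper's: the same convexity estimate ($x^{2t}\le 1+x^{2}$, i.e.\ $e^{2tr}\le 1+e^{2r}$) controls the domain and gives the uniform bound $\omega^{*}T_{\beta}^{2t}\omega\le\omega^{*}\omega+\beta^{*}\beta$, and your dominated-convergence argument for weak-operator continuity on a norm-bounded set is a legitimate alternative to the paper's truncation by the spectral projections $E([0,n])$. Your way of landing in $G_{\mathbb C,\lambda}\omega$ --- exhibiting $\psi(t)=S_{t}\omega$ with $S_{t}=\lambda(s)T_{\beta}^{t}\in G_{\mathbb C,\lambda}$ and $\omega(\h)\subset\mathcal D(S_{t})$ directly --- is in fact more direct than the paper's detour through the intersection condition $\psi(t)^{*}(\h)\cap\omega^{*}(\h)\ne\{0\}$ (obtained there via the projection $E([\tfrac1n,n])$) followed by \corollary~\ref{spectrum}.

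There is, however, a genuine logical gap at the point where you conclude $\psi(t)\in\spec A(G,\omega)$. You assert that the ``if'' direction of \corollary~\ref{spectrum} yields $\psi(t)=S_{t}\omega\in\spec A(G,\omega)\cap G_{\mathbb C,\lambda}\omega$. That corollary cannot deliver this: it is a statement about elements \emph{already assumed} to lie in $\spec A(G,\omega)$, and its ``if'' direction only says that such an element of the form $T\omega$ satisfies the intersection condition $\sigma^{*}(\h)\cap\omega^{*}(\h)\ne\{0\}$. Nothing you cite shows that $S_{t}\omega$ is multiplicative, i.e.\ that $\Gamma(\lambda(s)T_{\beta}^{t}\omega)\Omega=(\lambda(s)T_{\beta}^{t}\omega)\otimes(\lambda(s)T_{\beta}^{t}\omega)$. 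This is precisely the step the paper verifies by hand: since $\Gamma(T_{\beta}^{t})=T_{\beta}^{t}\otimes T_{\beta}^{t}$ by functional calculus, one computes $\Gamma(T_{\beta}^{t}\omega)\Omega=\Gamma(T_{\beta}^{t})\Gamma(\omega)\Omega=\Gamma(T_{\beta}^{t})(\omega\otimes\omega)=(T_{\beta}^{t}\omega)\otimes(T_{\beta}^{t}\omega)$, together with a word on why these compositions of unbounded operators agree with the bounded operator $(T_{\beta}^{t}\omega)\otimes(T_{\beta}^{t}\omega)$ on all of $\h\otimes\h$. The fact is true and the repair is short, but as written your argument presupposes exactly the membership in the spectrum that it is supposed to establish.
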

\begin{proof}
By the functional calculus, we have $\omega(\h)\subseteq \mathcal{D}(1+T_{\beta})\subseteq \mathcal{D}(T_{\beta}^{t})$ for every $t\in [0,1]$.  Moreover, as $x^{2t}\leq 1+x^{2}$ for $x\in \mathbb{R}_{+},$ we have $$0\leq \omega^{*}T_{\beta}^{2t}\omega\leq \omega^{*}\omega+\omega^{*}T_{\beta}^{2}\omega=\omega^{*}\omega+\beta^{*}\beta.$$ It follows that $T_{\beta}^{t}\omega$ is bounded for every $t\in [0,1]$, and hence  $T_{\beta}^{t}\omega$ and $\lambda(s)T_{\beta}^{t}\omega$ belong to $VN(G)$,  and the function $t\mapsto T_{\beta}^{t}\omega $  is strongly continuous; to see the latter we observe that
if  $P_n=E([0,n])$, where $E(\cdot)$ is the spectral measure of $T_\beta$,  then
$t\mapsto T^t_\beta P_n \omega \xi$ is continuous for every  $\xi\in\h$. Moreover, 
\begin{eqnarray*}
&&\|T_\beta^t(P_n\omega\xi-\omega\xi)\|^2=\langle T_\beta^t(P_n\omega\xi-\omega\xi),T_\beta^t(P_n\omega\xi-\omega\xi)\rangle\\
&&=\langle T_\beta^{2t}(P_n\omega\xi-\omega\xi),(P_n\omega\xi-\omega\xi\rangle\leq \langle T_\beta^2(P_n\omega\xi-\omega\xi),P_n\omega\xi-\omega\xi\rangle\\
&&=\|T_\beta(P_n\omega\xi-\omega\xi)\|^2=\|P_nT_\beta\omega\xi-T_\beta\omega\xi\|^2\to 0
\end{eqnarray*}
 as $P_n\to I$ strongly. 
 Basic approximation arguments give now that $T^t_\beta \omega \xi$ must depend continuously on $t\in[0,1]$ for each $\xi\in\h$.  From this we conclude that $t\mapsto \psi(t)$ is continuous as the map from $[0,1]$ to $VN(G)\simeq A(G,\omega)^*$ with the weak$^*$ topology.

It follows from the functional calculus that $\Gamma(T_{\beta}^{t})=T_{\beta}^{t}\otimes T_{\beta}^{t}$ for all $t\in[0,1],$ and thus $$\Gamma(T_{\beta}^{t}\omega)\Omega=\Gamma(T_{\beta}^{t})(\omega\otimes\omega)=(T_{\beta}^{t}\omega)\otimes (T_{\beta}^{t}\omega)$$ so that $T_{\beta}^{t}\omega$ and hence $\lambda(s)T_{\beta}^{t}\omega$ are  in $\spec\, A(G,\omega).$  

As the kernel of $T_{\beta}$ is trivial, there is  $n\in \mathbb{N}$ such that the orthogonal projection $P=E([\frac{1}{n},n])$ is non-zero. The restriction of $T_{\beta}^{t}$ to the invariant subspace $P\h$ is then invertible for every $t\in [0,1]$  and as $P\lambda(s)^*\psi(t)=PT_\beta^tP\omega$, $t\in [0,1]$, we have
$$\psi(t)^{*}(\lambda(s)P\h)=\omega^{*}(PT_{\beta}^{t}P\h)=\omega^{*}(P\h),$$
giving $\psi(t)^*(\h)\cap\omega^*(\h)\supset\omega^*(P\h)\ne \{0\}$. By \corollary~\ref{spectrum},  we obtain $\psi(t)\in \spec \;A(G,\omega)\cap G_{\mathbb C,\lambda}\omega$ for all $t\in [0,1]$.
\end{proof}

The last results concern  a deformation retraction of weight inverses.  

\begin{lem}\label{sless}
Assume that a weight inverse $\omega$ is positive. For every $s\in [0,1],$ the operator $\omega^{s}$ is again a weight inverse.
\end{lem}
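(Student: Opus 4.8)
The plan is to reduce the claim to a single operator inequality and then apply operator monotonicity. Since $\omega$ is assumed positive we have $\omega=\omega^{*}$, and being a weight inverse forces $0\leq\omega\leq 1$ (recall from Remark $(ii)$ that a weight inverse is a contraction). Hence $\omega\omega^{*}=\omega^{2}$, and the defining conditions \eqref{iw} and \eqref{kernel} simplify to $\omega^{2}\otimes\omega^{2}\leq\Gamma(\omega^{2})$ and $\ker\omega=\{0\}$. For $s\in[0,1]$ the operator $\omega^{s}$ is defined by the Borel functional calculus and is again a positive contraction, so to prove it is a weight inverse I only need to verify two things: $(a)$ $\ker\omega^{s}=\{0\}$, and $(b)$ $\omega^{2s}\otimes\omega^{2s}\leq\Gamma(\omega^{2s})$.

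For $(a)$, since $t\mapsto t^{s}$ vanishes on $[0,\infty)$ only at $t=0$, the spectral projections satisfy $E_{\omega^{s}}(\{0\})=E_{\omega}(\{0\})$; as $\ker\omega=\{0\}$ gives $E_{\omega}(\{0\})=0$, we conclude $\ker\omega^{s}=\{0\}$. (At the endpoint $s=0$ this reads $\omega^{0}=I$, the trivial weight inverse.) For $(b)$, I would set $a=\omega^{2}$, a positive contraction with $a\otimes a\leq\Gamma(a)$. Because $a\otimes 1$ and $1\otimes a$ are commuting positive operators, the joint functional calculus (as set up in the preliminaries) gives $(a\otimes a)^{s}=a^{s}\otimes a^{s}=\omega^{2s}\otimes\omega^{2s}$; and since $\Gamma$ is a unital normal $*$-homomorphism it intertwines the continuous functional calculus, so $\Gamma(a)^{s}=\Gamma(a^{s})=\Gamma(\omega^{2s})$. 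Thus $(b)$ is exactly the assertion $(a\otimes a)^{s}\leq\Gamma(a)^{s}$.

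The one genuinely nontrivial ingredient — and the crux of the argument — is the Löwner–Heinz inequality: for $0\leq s\leq 1$ the function $t\mapsto t^{s}$ is operator monotone, i.e. $0\leq X\leq Y$ implies $X^{s}\leq Y^{s}$. Applying this with $X=a\otimes a$ and $Y=\Gamma(a)$, both bounded positive operators satisfying $X\leq Y$, yields $(a\otimes a)^{s}\leq\Gamma(a)^{s}$, which is precisely $(b)$. Together with $(a)$ this shows $\omega^{s}$ is a weight inverse.

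I do not expect any serious obstacle beyond correctly invoking operator monotonicity. The two points to be careful about are that $\omega$ is bounded, so the whole argument takes place among bounded positive operators and Löwner–Heinz applies verbatim (no affiliation/unboundedness subtleties arise here), and that $\Gamma$, being a unital $*$-homomorphism, genuinely commutes with the functional calculus $t\mapsto t^{s}$ on positive elements.
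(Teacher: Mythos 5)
Your proof is correct and follows essentially the same route as the paper: the paper's own argument also applies the Löwner--Heinz inequality to \eqref{iw} to obtain $\omega^{2s}\otimes\omega^{2s}\leq\Gamma(\omega^{2s})$ and dismisses the kernel condition as easy. You simply spell out the functional-calculus identities $(a\otimes a)^{s}=a^{s}\otimes a^{s}$ and $\Gamma(a)^{s}=\Gamma(a^{s})$ that the paper leaves implicit.
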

\begin{proof}
By the Löwner-Heinz inequality: if $0\leq A\leq B,$ then also $0\leq A^{s}\leq B^{s}$ for $s\in [0,1]$. Applying this to the inequality~\eqref{iw}, we get
$$
\begin{array}{ccc}
\omega^{2s}\otimes \omega^{2s}\leq \Gamma(\omega^{2s}), & \text{for all $s\in [0,1].$}
\end{array}
$$
The conditions on the kernel(s) is easy to see.
\end{proof}
\begin{prop}
Let $\omega$ be a positive weight inverse. If $\Omega_{s}$ is the $2$-cocycle associated to  $\omega^{s},$ $s\in [0,1]$, then for $0\leq s\leq t\leq1$ the following hold
\begin{enumerate}[$(i)$]
\item
\begin{equation}\label{intertwine}
\Gamma(\omega^{s})\Omega_{t}=\Omega_{t-s}(\omega^{s}\otimes \omega^{s});
\end{equation}
\item if $\ker\;\Omega_{t}^{*}=\{0\},$ then  $\ker\Omega_{s}^*=\{0\}$;
\item the map $\spec\, A(G,\omega^{s})\to\spec\, A(G,\omega^{t})$, given as $\sigma\mapsto \sigma\omega^{t-s}$, is injective and maps  $\spec\, A(G,\omega^{s})\cap G_{\mathbb C,\lambda}\omega^s$ 
to   $ \spec\, A(G,\omega^{t})\cap G_{\mathbb C,\lambda}\omega^t$.
\end{enumerate}
\end{prop}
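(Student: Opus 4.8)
The plan is to prove everything from part $(i)$, which is the engine; parts $(ii)$ and $(iii)$ then follow by soft range/density arguments and a direct computation. Throughout I use that, by \lemma~\ref{sless}, each $\omega^{r}$ with $r\in[0,1]$ is a positive weight inverse, so its associated cocycle $\Omega_{r}$ is the bounded operator characterised (via \lemma~\ref{lem1}) by $\Gamma(\omega^{r})\Omega_{r}=\omega^{r}\otimes\omega^{r}$, and that $\ker\Gamma(\omega^{r})=\{0\}$. Since $\omega\geq 0$ and $\Gamma$ is a normal $*$-homomorphism, the powers $\omega^{r}$ commute, $\Gamma(\omega^{r})=\Gamma(\omega)^{r}$, and $\Gamma(\omega^{t-s})\Gamma(\omega^{s})=\Gamma(\omega^{t})$.

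For $(i)$ I would multiply each candidate side on the left by the bounded injective operator $\Gamma(\omega^{t-s})$ and compute
$$\Gamma(\omega^{t-s})\bigl(\Gamma(\omega^{s})\Omega_{t}\bigr)=\Gamma(\omega^{t})\Omega_{t}=\omega^{t}\otimes\omega^{t},$$
$$\Gamma(\omega^{t-s})\bigl(\Omega_{t-s}(\omega^{s}\otimes\omega^{s})\bigr)=(\omega^{t-s}\otimes\omega^{t-s})(\omega^{s}\otimes\omega^{s})=\omega^{t}\otimes\omega^{t}.$$
As $\ker\Gamma(\omega^{t-s})=\{0\}$, cancelling $\Gamma(\omega^{t-s})$ yields $(i)$. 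For $(ii)$ I would first specialise $(i)$, replacing $s$ by $t-s$ (legitimate since $0\le t-s\le t\le 1$), to relate $\Omega_{t}$ and $\Omega_{s}$ directly:
$$\Gamma(\omega^{t-s})\Omega_{t}=\Omega_{s}(\omega^{t-s}\otimes\omega^{t-s}).$$
Now I track ranges, recalling $\ker\Omega_{s}^{*}=\{0\}\iff \mathrm{Ran}(\Omega_{s})$ is dense. Since $\omega^{t-s}$ is positive and injective it has dense range, so $\omega^{t-s}\otimes\omega^{t-s}$ does too; as $\Omega_{s}$ is bounded, $\overline{\mathrm{Ran}\bigl(\Omega_{s}(\omega^{t-s}\otimes\omega^{t-s})\bigr)}=\overline{\mathrm{Ran}\,\Omega_{s}}$. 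On the other hand $\Gamma(\omega^{t-s})$ is self-adjoint with trivial kernel, hence has dense range, and if $\ker\Omega_{t}^{*}=\{0\}$ then $\mathrm{Ran}\,\Omega_{t}$ is dense, so $\overline{\mathrm{Ran}\bigl(\Gamma(\omega^{t-s})\Omega_{t}\bigr)}=\overline{\mathrm{Ran}\,\Gamma(\omega^{t-s})}=\h\otimes\h$. Combining these with the displayed identity gives $\overline{\mathrm{Ran}\,\Omega_{s}}=\h\otimes\h$, i.e. $\ker\Omega_{s}^{*}=\{0\}$.

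For $(iii)$, take $\sigma\in\spec\, A(G,\omega^{s})$, so $\Gamma(\sigma)\Omega_{s}=\sigma\otimes\sigma$ with $\sigma\neq 0$. Using $(i)$ (in its $s\mapsto t-s$ form) I compute
\begin{align*}
\Gamma(\sigma\omega^{t-s})\Omega_{t}&=\Gamma(\sigma)\Gamma(\omega^{t-s})\Omega_{t}=\Gamma(\sigma)\Omega_{s}(\omega^{t-s}\otimes\omega^{t-s})\\
&=(\sigma\otimes\sigma)(\omega^{t-s}\otimes\omega^{t-s})=(\sigma\omega^{t-s})\otimes(\sigma\omega^{t-s}),
\end{align*}
so $\sigma\omega^{t-s}\in\spec\, A(G,\omega^{t})$ once one knows $\sigma\omega^{t-s}\neq 0$. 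Both the nonvanishing and the injectivity of $\sigma\mapsto\sigma\omega^{t-s}$ follow from $\omega^{t-s}$ having dense range: if $(\sigma_{1}-\sigma_{2})\omega^{t-s}=0$ then $\sigma_{1}-\sigma_{2}$ vanishes on a dense set, forcing $\sigma_{1}=\sigma_{2}$. Finally, if in addition $\sigma\in G_{\mathbb C,\lambda}\omega^{s}$, write $\sigma=T\omega^{s}$ with $T\in G_{\mathbb C,\lambda}$; then for every $\xi\in\h$, using $\omega^{s}\omega^{t-s}=\omega^{t}$ and $\omega^{t}\xi=\omega^{s}(\omega^{t-s}\xi)\in\omega^{s}\h\subset\mathcal D(T)$, one gets $\sigma\omega^{t-s}\xi=T\omega^{s}(\omega^{t-s}\xi)=T\omega^{t}\xi$, whence $\sigma\omega^{t-s}=T\omega^{t}\in G_{\mathbb C,\lambda}\omega^{t}$. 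Thus the map carries $\spec\, A(G,\omega^{s})\cap G_{\mathbb C,\lambda}\omega^{s}$ into $\spec\, A(G,\omega^{t})\cap G_{\mathbb C,\lambda}\omega^{t}$.

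The proof is not deep; the only points requiring care are the bookkeeping in $(ii)$ — that composing bounded operators of dense range preserves density of range — and, in $(iii)$, verifying the identity $\sigma\omega^{t-s}=T\omega^{t}$ at the level of the unbounded operator $T$, i.e. checking the domain inclusion $\omega^{t}\h\subset\mathcal D(T)$ rather than merely manipulating symbols. Everything else is bounded-operator algebra driven by the cocycle relation $(i)$.
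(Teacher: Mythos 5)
Your proof is correct and follows essentially the same route as the paper: part $(i)$ by left-multiplying by the injective operator $\Gamma(\omega^{t-s})$ and cancelling, and $(ii)$ together with the first half of $(iii)$ via the substituted identity $\Gamma(\omega^{t-s})\Omega_{t}=\Omega_{s}(\omega^{t-s}\otimes\omega^{t-s})$ plus density-of-range bookkeeping. The only (harmless) divergence is in the last step of $(iii)$, where the paper verifies preservation of the range-intersection condition $\sigma^{*}(\h)\cap\omega^{s}(\h)\neq\{0\}$ and appeals to \corollary~\ref{spectrum}, whereas you check the factorisation $\sigma\omega^{t-s}=T\omega^{t}$ directly at the level of domains; both are valid.
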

\begin{proof}
$(i)$ The $2$-cocycle $\Omega_{t-s}$ is the unique operator which satisfies $\Gamma(\omega^{t-s})\Omega_{t-s}=\omega^{t-s}\otimes \omega^{t-s}$. Hence, since $\ker\omega^{s}=\{0\},$  it follows that $\Omega_{t-s}(\omega^{s}\otimes \omega^{s})$ is the unique operator that satisfies $\Gamma(\omega^{t-s})\Omega_{t-s}(\omega^{s}\otimes \omega^{s})=\omega^{t}\otimes \omega^{t}$. As  $\Gamma(\omega^{t-s})\Gamma(\omega^{s})\Omega_{t}=\omega^{t}\otimes\omega^{t},$ we obtain \eqref{intertwine}.

$(ii)$ This follows now directly from~\eqref{intertwine}. 

$(iii)$ If $\sigma\in \spec\,A(G,\omega^{s})$, then by ~\eqref{intertwine} 
$$
\Gamma(\sigma\omega^{t-s})\Omega_{t}=\Gamma(\sigma)\Omega_{s}(\omega^{t-s}\otimes \omega^{t-s}) =(\sigma\omega^{t-s})\otimes (\sigma\omega^{t-s}),
$$
i.e. $\sigma\omega^{t-s}\in \spec\, A(G,\omega^{t})$. 
If $\sigma^{*}(\h)\cap \omega^{s}(\h)\neq \{0\}$ then $$(\omega^{t-s}\sigma)^{*}(\h)\cap (\omega^{t}(\h)=\omega^{t-s}(\sigma^{*}(\h)\cap \omega^{s}(\h))\ne\{0\},$$ as the kernel of $\omega^{t-s}$ is trivial. The injectivity of $\sigma\mapsto\sigma\omega^{t-s}$ follows from the fact that the range of $\omega^{t-s}$ is dense in $\h$. 
\end{proof}


\subsection{\sc{\textbf{Conditions Guaranteeing Complexification}}}
In this section we will investigate conditions on the group $G$ and the weight inverse $\omega$ for which the inclusion (\ref{inclusion}) of the spectrum  of $A(G,\omega)$ into the complexification $G_{\mathbb C,\lambda}$ holds true. 

First, we present  some sufficient conditions for $\ker\;\Omega^{*}=\{0\}$. 

Recall that if $H$ is a closed subgroup of $G$ and  $\lambda_H$ and $\lambda_G$ are the left regular representations of $H$ and $G$ respectively, then there is a canonical injective $*$-homomorphism $\iota_H:VN(H)\to VN(G)$  given by $\lambda_H(s)\mapsto \lambda_G(s)$, for $s\in H$ (\cite{herz}).

We say that a weight inverse $\omega$  on the dual of $G$ is {\it central} if $\omega$ is in the center of $VN(G)$. 
\begin{prop}\label{cond}
Let $\omega$ be a weight inverse on the dual of $G$. Then $\ker\;\Omega^*=\{0\}$ holds provided that any of the following is satisfied:
\begin{enumerate}
\item $G$ is compact;
\item $\omega=\iota_H(\omega_H)$, where $\omega_H$ is a central weight inverse on the dual of a closed subgroup $H$ of $G$.
\end{enumerate}
\end{prop}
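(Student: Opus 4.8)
The plan is to treat the two cases by quite different mechanisms, but in each I reduce the claim $\ker\Omega^{*}=\{0\}$ to the assertion that $\Omega$ has dense range, since $\ker\Omega^{*}=(\mathrm{Ran}\,\Omega)^{\perp}$. The one structural fact I use throughout is that, because $\ker\Gamma(\omega)=\{0\}$, the cocycle is the \emph{unique} operator with $\Gamma(\omega)\Omega=\omega\otimes\omega$ (see \lemma~\ref{lem1}): if $X\in VN(G)\bar\otimes VN(G)$ satisfies $\Gamma(\omega)X=\omega\otimes\omega$ then $\Gamma(\omega)(X-\Omega)=0$ forces $X=\Omega$. This uniqueness is what lets me write $\Omega$ down explicitly in each situation.

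For the compact case I would pass to the Peter--Weyl decomposition $\h=L^{2}(G)=\bigoplus_{\pi}H_{\pi}\otimes\overline{H_{\pi}}$ over the (finite-dimensional) irreducibles, under which $VN(G)=\bigoplus_{\pi}B(H_{\pi})$ acts on the first legs; write $\omega=(\omega_{\pi})_{\pi}$. Since $\ker\omega=\{0\}$ and each $H_{\pi}$ is finite dimensional, every block $\omega_{\pi}$ is injective, hence invertible. Now $VN(G)\bar\otimes VN(G)\cong VN(G\times G)=\bigoplus_{\pi,\rho}B(H_{\pi}\otimes H_{\rho})$, and $\Gamma(\omega)$, $\omega\otimes\omega$ and $\Omega$ all live here and are block diagonal. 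On the block $(\pi,\rho)$ the relation $\Gamma(\omega)\Omega=\omega\otimes\omega$ reads $\Gamma(\omega)_{\pi\rho}\,\Omega_{\pi\rho}=\omega_{\pi}\otimes\omega_{\rho}$ with invertible right-hand side, so $\Omega_{\pi\rho}$ is injective, and being an endomorphism of a finite-dimensional space it is bijective. Thus $\Omega$ is block diagonal with bijective blocks, its range is dense, and $\ker\Omega^{*}=\{0\}$.

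For the subgroup case I would first push everything down to $H$. Since $\iota_{H}$ intertwines the coproducts, $\Gamma\circ\iota_{H}=(\iota_{H}\otimes\iota_{H})\circ\Gamma_{H}$ (both are normal maps agreeing on the generators $\lambda_{H}(s)$), it follows that $(\iota_{H}\otimes\iota_{H})(\Omega_{H})$ satisfies $\Gamma(\omega)\,(\iota_{H}\otimes\iota_{H})(\Omega_{H})=\omega\otimes\omega$, where $\Omega_{H}$ is the cocycle of $\omega_{H}$; by uniqueness $\Omega=(\iota_{H}\otimes\iota_{H})(\Omega_{H})$. As $\iota_{H}\otimes\iota_{H}$ is a normal injective $*$-homomorphism it commutes with the Borel functional calculus, so it carries the kernel projection $\chi_{\{0\}}(\Omega_{H}\Omega_{H}^{*})$ of $\Omega_{H}^{*}$ to $\chi_{\{0\}}(\Omega\Omega^{*})$, the kernel projection of $\Omega^{*}$; by faithfulness one vanishes iff the other does. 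This reduces part (2) to the intrinsic statement $\ker\Omega_{H}^{*}=\{0\}$ on $H$.

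The remaining central statement is the heart. Being central, $\omega_{H}$ is normal (the centre is abelian), so $A:=\Gamma_{H}(\omega_{H})$ and $B:=\omega_{H}\otimes\omega_{H}$ are normal; moreover $B\in Z(VN(H))\bar\otimes Z(VN(H))=Z(VN(H)\bar\otimes VN(H))$, so $B,B^{*}$ commute with $A,A^{*}$, and hence $A,A^{*},B,B^{*}$ generate an abelian algebra $N\cong L^{\infty}(X,\mu)$; write $A\leftrightarrow a$, $B\leftrightarrow b$. Here $a\neq0$ a.e. (as $\ker A=\ker\Gamma_{H}(\omega_{H})=\{0\}$), $b\neq0$ a.e. (as $\ker B=\{0\}$), and \eqref{iw} gives $|B|^{2}=\omega_{H}\omega_{H}^{*}\otimes\omega_{H}\omega_{H}^{*}\le\Gamma_{H}(\omega_{H}\omega_{H}^{*})=|A|^{2}$, i.e. $|b|\le|a|$ a.e., so $b/a\in L^{\infty}(X)$ with $\|b/a\|_{\infty}\le1$. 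Since $A\,(b/a)=B$, uniqueness yields $\Omega_{H}=b/a\in N$; then $\Omega_{H}^{*}\leftrightarrow\overline{b/a}$ vanishes only on the null set $\{b=0\}$, so $\ker\Omega_{H}^{*}=\{0\}$. The two genuinely nontrivial inputs, and the places I expect the real work, are the identity $\Omega=(\iota_{H}\otimes\iota_{H})(\Omega_{H})$ (resting on compatibility of the coproducts together with uniqueness of the cocycle) and the observation that centrality forces $A$ and $B$ to commute; it is the latter that collapses the operator relation to the scalar quotient $b/a$ and makes the kernel computation transparent.
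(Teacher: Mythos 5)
Your proof is correct. For part (1) you are doing exactly what the paper does: identify $VN(G\times G)$ with an $\ell^\infty$-sum of matrix algebras, so that injectivity of $\Omega$ (already known from \lemma~\ref{lem1}) forces each finite-dimensional block to be bijective, whence $\ker\Omega^{*}=(\mathrm{Ran}\,\Omega)^{\perp}=\{0\}$; your blockwise re-derivation of injectivity is harmless redundancy.

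For part (2) the underlying mechanism is the same — centrality of $\omega_H$ forces enough commutativity to conclude $\ker\Omega=\ker\Omega^{*}$ — but the technical route differs. The paper first proves $\Gamma(\omega)\Omega=\Omega\Gamma(\omega)$ from centrality of $\omega\otimes\omega$, upgrades this to $\Gamma(\omega)^{*}\Omega=\Omega\Gamma(\omega)^{*}$ via Fuglede--Putnam, and then runs a chain of identities to get $\Omega\Omega^{*}=\Omega^{*}\Omega$, i.e.\ normality of $\Omega$; the general subgroup case is dispatched in one line using $\Gamma\circ\iota_H=(\iota_H\otimes\iota_H)\circ\Gamma_H$. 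You instead make the reduction to $H$ explicit and rigorous first (via uniqueness of the cocycle and the fact that a normal injective $*$-homomorphism preserves the kernel projection $\chi_{\{0\}}(\Omega_H\Omega_H^{*})$ — for which you should note that this spectral projection is a monotone limit of continuous functional calculus, hence preserved by normal maps), and then prove the stronger statement that $\Omega_H$ actually lies in the abelian von Neumann algebra $N$ generated by $A=\Gamma_H(\omega_H)$, $B=\omega_H\otimes\omega_H$ and their adjoints, identifying it as the quotient $b/a$ in $L^\infty(X,\mu)$. This buys a completely transparent kernel computation ($\overline{b/a}\neq0$ a.e.) and avoids Fuglede--Putnam altogether, at the modest cost of invoking the structure theory of abelian von Neumann algebras and checking that $a\neq0$ and $b\neq0$ a.e.\ (which you correctly deduce from triviality of the kernels of $\Gamma_H(\omega_H)$ and $\omega_H\otimes\omega_H$). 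Both arguments ultimately rest on the same two facts: $\omega_H\otimes\omega_H$ is central and $\Gamma_H(\omega_H)$ is injective; your version yields the slightly sharper conclusion that $\Omega_H$ is not merely normal but lies in a commutative algebra determined by the weight.
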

\begin{proof}
(1) It is known that if $G$ is compact then $VN(G)\bar\otimes VN(G)\simeq VN(G\times G)$
 can be identified with the $\ell^\infty$ sum of matrix algebras $M_{n_j}(\mathbb C)$. Therefore $\ker\; X=\{0\}\Leftrightarrow \ker \;X^*=\{0\}$ for any $X\in VN(G\times G)$.  This gives $\ker\;\Omega^*=\{0\}$, as $\Omega$ is injective. 

(2) Since $\ker\;\Omega=\{0\}$ it is enough to see that $$\Omega^*\Omega=\Omega\Omega^*,$$ as in this case $\ker\;\Omega=\ker\;\Omega^{*}.$   Assume first that $H=G$. Then being central, $\omega$ is a normal operator and therefore so is $\Gamma(\omega)$. Moreover, as
$\omega\otimes\omega\in Z(VN(G\times G))$ (the center of $VN(G\times G)$),  we have
$$\Gamma(\omega)\Omega\Gamma(\omega)=(\omega\otimes\omega)\Gamma(\omega)=\Gamma(\omega)(\omega\otimes\omega)=\Gamma(\omega)^2\Omega.$$
Hence, as $\ker\;\Gamma(\omega)=\{0\}$, we  have $\Gamma(\omega)\Omega=\Omega\Gamma(\omega)$. By the Fuglede-Putnam theorem it follows that also $\Gamma(\omega)^*\Omega=\Omega\Gamma(\omega)^*$. A calculation now yields
\begin{eqnarray*}
&&\Gamma(\omega)\Omega\Omega^*\Gamma(\omega)^*=\omega\omega^*\otimes\omega\omega^*=
\omega^*\omega\otimes\omega^*\omega\\&&=\Omega^*\Gamma(\omega)^*\Gamma(\omega)\Omega=
\Omega^*\Gamma(\omega)\Gamma(\omega)^*\Omega=\Gamma(\omega)\Omega^*\Omega\Gamma(\omega)^*,
\end{eqnarray*}
and we get the claim by using again  $\ker\;\Gamma(\omega)=\{0\}$.

The proof for general $H$ is  similar, if we take into account that $\Gamma\circ\iota_H=(\iota_H\otimes\iota_H)\circ \Gamma_H$, where $\Gamma_H$ is the comultiplication on $VN(H)$.

\end{proof}

The next simple lemma gives  a sufficient condition for $\sigma^*(\h)\cap\omega^*(\h)\ne\{0\}$  to hold for any $\sigma\in \spec\, A(G,\omega)$, where $\h=L^2(G)$.  We assume that $\ker\;\Omega^{*}=\{0\}.$

\begin{lem}\label{lemma_image}
If there is a subspace $\K\subset \h$ such that $VN(G)(\K)\subset \K$  and $\omega|_{\K}$ is invertible, then
$\sigma^*(\h)\cap\omega^*(\h)\ne\{0\}$ for any $\sigma\in \spec\, A(G,\omega)$.
 \end{lem}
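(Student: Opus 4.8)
The plan is to reduce the entire question to the closed subspace $\overline{\K}$, on which both $\omega$ and $\sigma$ can be controlled. The conceptual starting point is that since $\K$ is invariant under $VN(G)$, so is its closure $\overline{\K}$, and hence the orthogonal projection $P$ onto $\overline{\K}$ lies in the commutant $VN(G)'$. Because $\omega,\sigma\in VN(G)$, this forces $P$ to commute with $\omega$, $\sigma$, and their adjoints, so all four operators leave $\overline{\K}$ invariant. This single observation is the engine of the proof: it lets me work entirely inside $\overline{\K}$ and never have to touch the relation $\Gamma(\sigma)\Omega=\sigma\otimes\sigma$ directly.

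Next I would record two facts about the restrictions to $\overline{\K}$. First, since $\omega|_{\K}$ is invertible and $\omega$ commutes with $P$, the operator $\omega$ is bounded below on $\overline{\K}$ and maps it onto itself (invertibility passes from the dense $\K$ to $\overline{\K}$ by continuity and closedness of the range of a bounded-below operator). Writing $\omega$ in block-diagonal form with respect to $\h=\overline{\K}\oplus\overline{\K}^{\perp}$, the same block form holds for $\omega^{*}$, so $\omega^{*}|_{\overline{\K}}=(\omega|_{\overline{\K}})^{*}$ is again invertible on $\overline{\K}$; in particular $\omega^{*}(\overline{\K})=\overline{\K}$, whence $\overline{\K}\subseteq \omega^{*}(\h)$. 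Second, the standing assumption $\ker\,\Omega^{*}=\{0\}$ allows me to invoke \lemma~\ref{trivcoker} to conclude $\ker\,\sigma^{*}=\{0\}$, i.e. $\sigma^{*}$ is injective.

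Finally I would combine the two. Since $\omega|_{\K}$ is invertible on a nontrivial space, $\overline{\K}\neq\{0\}$, so I may pick $\xi\in\overline{\K}$ with $\xi\neq 0$. Injectivity of $\sigma^{*}$ gives $\sigma^{*}\xi\neq 0$; invariance of $\overline{\K}$ under $\sigma^{*}$ gives $\sigma^{*}\xi\in\overline{\K}\subseteq\omega^{*}(\h)$; and trivially $\sigma^{*}\xi\in\sigma^{*}(\h)$. Thus $\sigma^{*}\xi$ is a nonzero vector in $\sigma^{*}(\h)\cap\omega^{*}(\h)$, which is exactly the desired conclusion.

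I do not expect a genuine obstacle here, since the cocycle relation enters only indirectly through \lemma~\ref{trivcoker}. The two points that require a little care, rather than difficulty, are (i) justifying that the projection onto the invariant subspace lands in the commutant and therefore commutes with $\omega$ and $\sigma$, and (ii) transferring invertibility of $\omega|_{\K}$ to the clean statement $\overline{\K}\subseteq\omega^{*}(\h)$ via the block-diagonal representation (and checking that invertibility on $\K$ extends to $\overline{\K}$). I would also flag the implicit hypothesis $\K\neq\{0\}$, without which the conclusion is vacuous; it is automatic from the invertibility of $\omega|_{\K}$ on a nonzero space.
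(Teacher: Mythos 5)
Your proof is correct and follows essentially the same route as the paper's: the paper likewise observes that invariance of $\K$ under $VN(G)$ together with invertibility of $\omega|_{\K}$ gives $\omega^*(\K)=\K$, and then concludes that $\sigma^*(\h)\cap\omega^*(\h)\supset\sigma^*(\K)\cap\omega^*(\K)=\sigma^*(\K)\neq\{0\}$ by \lemma~\ref{trivcoker}. Your version merely spells out the commutant/block-diagonal justification and the passage to $\overline{\K}$, which the paper leaves implicit.
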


 \begin{proof}
 As $\K$ is invariant and $\omega|_{\K}$ is invertible, $\omega^*(\K)=\K$. We have
 $$\sigma^*(\h)\cap\omega^*(\h)\supset\sigma^*(\K)\cap\omega^*(\K)=\sigma^*(\K),$$
 where the latter is non-zero by \lemma ~\ref{trivcoker}. 
 \end{proof}

Using \lemma~\ref{lemma_image} we can now list groups and weights for which the spectrum of the associated Beur\-ling-Fourier algebra  is in the complexification $G_{\mathbb C,\lambda}$, meaning that we identify $A(G,\omega)^*$ with $VN(G,\omega)$ as in \remark ~\ref{vnGomega}; with a slight abuse of notation we write $\spec\, A(G,\omega)\subset G_{\mathbb C,\lambda}$.
\begin{enumerate}[$(1)$]
\item {\it $G$ is compact and $\omega$ is arbitrary.}
If $G$ is compact then it is known that the left regular representation $\lambda$ on $G$ is a direct sum of irreducible (finite-dimensional) representations and hence there exists a finite-dimensional invariant subspace $\K\subseteq\h$.  As $\ker\omega=\{0\}$, $\omega$ is invertible on $\K$. By \proposition ~\ref{cond}, $\ker\, \Omega^*=\{0\}$. By \corollary ~\ref{spectrum},
$\spec\, A(G,\omega)\subset G_{\mathbb C,\lambda}$. 
In \cite{lst} and \cite{gllst} the result was derived from the "abstract Lie" theory developed in \cite{cartwright_mcmullen, McK1} showing that the multiplicative linear functionals on $\text{Trig}(G)$, the algebra of coefficient functions with respect to  irreducible representations,  can be identified with the complexification $G_{\mathbb C, \lambda}$. As $\text{Trig}(G)\subset A(G,\omega)$, the statement is clear.

\item {\it $G$ is an extension of a compact group by abelian group and $\omega$ is a weight inverse such that  $\ker\,\Omega^*=\{0\}$.}  If $K$ is a non-trivial compact  normal subgroup, let  $P_K\in B(L^2(G))$  be the projection onto the (non-trivial) subspace of functions which are constant on the cosets $xK$, $x\in G$. 
As $P_K$ commutes with $\lambda_G(g)$, $g\in G$, the subspace $P_KL^2(G)$ is invariant with respect to $\lambda_G$, and as $G/K$ is abelian and $P_Kf$ are constant on the cosets, $\lambda_G(g_1g_2)P_Kf=\lambda_G(g_2g_1)P_Kf$, i.e. the von Neumann algebra generated by $\lambda_G(g)P_K$, $g\in G$, is commutative. As $\omega_K:=\omega|_{P_KL^2(G)}$ belongs to the von Neumann algebra,  there exists a subspace $\K$  (e.g. $\K=E_{|\omega_K|}([\varepsilon,\infty))P_KL^2(G)$ for some $\varepsilon>0$)  such that $VN(G)\K\subset\K$ and $\omega|_{\K}$ is invertible.

\item
{\it $G$ is a separable Moore group and $\omega$ is arbitrary.} If $G$ is a Moore group, i.e. any irreducible representation of $G$ is finite dimensional, then $G$ is a type I group with the unitary dual $\hat G$ being a standard Borel space. Moreover, there is a standard Borel measure $\mu$ and a $\mu$-measurable cross section $\xi\to\pi^\xi$ from $\hat G$ to concrete irreducible unitary representation acting on $\h_\xi$ such that $\lambda$ is quasi-equivalent to $\int^{\oplus}_{\hat G}\pi^\xi d\mu(\xi)$ so that $VN(G)\simeq L^\infty (\hat G,d\mu(\xi); B(\h_\xi))$.
With this identification we have $\omega=\int_{\widehat G}^{\oplus}\omega_\xi d\mu(\xi)$. Let for $\varepsilon >0$

$$
\Delta_\varepsilon=\cap_{n}\{\xi\in\widehat G\;|\; \langle|\omega_\xi|x_n(\xi),x_n(\xi)\rangle\geq\varepsilon \|x_n(\xi)\|^2\},
$$
where $(x_n)_n$ is a sequence such that $(x_n(\xi))_n$ is total in $\h_\xi$ for any $\xi$.
As $\ker\omega=\ker\omega^*=\{0\}$, there exists a null set $M\subset\hat G$ such that $\ker\omega_\xi=\ker\omega_\xi^*=\{0\}$ for any $\xi\in\hat G\setminus M$.  Then, as $\h_\xi$ is finite-dimensional, for each $\xi\in\hat G\setminus M$, we have $|\omega_\xi|\;\geq c_\xi I_\xi$ for some $c_\xi>0$. Hence   $\mu(\Delta_\varepsilon)>0$  for some $\varepsilon>0$ and $P_\varepsilon=\int_{\hat G}\chi_{\Delta_\varepsilon}I_\xi\; d\mu(\xi)$ is a non-zero projection onto invariant subspace $\K$ such that $|\omega||_{\K}\geq\varepsilon $; $\omega|_{\K}$ is invertible.  As $\ker\Omega=\{0\}$ and $G\times G$ is Moore, we can argue as above to conclude that $\ker\; \Omega^*=\{0\}$. Therefore, by \corollary ~\ref{spectrum}, we have the inclusion of the spectrum of $A(G,\omega)$ into $G_{\mathbb C,\lambda}$ as  in the previous paragraph.  

    \item {\it $G$ is a separable type $I$  unimodular group and $\omega=\int_{\hat G}^{\oplus}\omega_\xi\; d\mu(\xi)$ with $\omega_\xi$  invertible  on a set ${\mathcal  N}\subset \hat G$ of positive measure. }  We define an invariant subspace $\K$ such that $\omega|_{\K}$ is invertible as above and get the statement of \corollary ~\ref{spectrum} in this case as well if $\ker\; \Omega^*=\{0\}$.  Central weights fall in this class. Any weight on $G$ such that the set $\mathcal N=\{\xi\in\hat G\;|\; \dim\h_\xi<\infty\}$ has positive $\mu$-measure also satisfies that condition.
\end{enumerate}

Recall that a locally compact group $G$ is called an {\it [IN]-group} if it has a compact conjugation-invariant neighbourhood of the identity.  It is called a {\it [SIN]-group} if it has a base of conjugate-invariant neighbourhoods of $e$. We note that any [SIN]-group is [IN]. Typical [SIN]-groups are discrete, compact and abelian groups.

The following result is likely known, but we could not find a reference.
\begin{prop}
$G$ is an [IN]-group  if and only if $VN(G)$ admits a normal tracial state. 
\end{prop}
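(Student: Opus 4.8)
The plan is to translate the statement into one about positive-definite functions via the predual identification $VN(G)_{*}=A(G)$. A normal state $\tau$ on $VN(G)$ corresponds to a unique $u\in A(G)$ with $\tau(\lambda(s))=u(s)$; positivity of $\tau$ forces $u$ to be positive definite (so $|u(s)|\le u(e)$, with $u(e)=\tau(I)=1$), while the trace property $\tau(\lambda(s)\lambda(t))=\tau(\lambda(t)\lambda(s))$ is exactly $u(st)=u(ts)$, i.e. $u$ is a class function, $u(tst^{-1})=u(s)$. Since $A(G)\subseteq C_{0}(G)$, the whole statement reduces to: $G$ is [IN] if and only if there is a central positive-definite $u\in A(G)$ with $u(e)=1$. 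One point to dispatch first is that the trace identity, a priori known only on $\lambda(G)$, must be promoted to all of $VN(G)$; this is routine using normality, since for fixed $b\in VN(G)$ the functionals $a\mapsto\tau(ab)$ and $a\mapsto\tau(ba)$ are weak$^{*}$-continuous and agree on the weak$^{*}$-dense span of $\lambda(G)$, hence everywhere.

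For the reverse implication I would take such a $u$ and form $V_{\varepsilon}=\{s\in G:\,|u(s)|\ge 1-\varepsilon\}$ for a fixed $\varepsilon\in(0,1)$. Continuity of $u$ together with $u(e)=1$ makes $V_{\varepsilon}$ a neighbourhood of $e$ (it contains the open set $\{|u|>1-\varepsilon\}\ni e$); the fact that $u\in C_{0}(G)$ makes $V_{\varepsilon}$ compact (it is closed and contained in a compact set outside of which $|u|<1-\varepsilon$); and centrality of $u$ makes $V_{\varepsilon}$ conjugation invariant. Thus $V_{\varepsilon}$ is a compact conjugation-invariant neighbourhood of $e$, and $G$ is an [IN]-group.

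For the forward implication, let $V$ be a compact conjugation-invariant neighbourhood of $e$ and set $\tau(x)=|V|^{-1}\langle x\chi_{V},\chi_{V}\rangle$, which is a normal state since $\|\chi_{V}\|_{2}^{2}=|V|$. It remains to verify that the associated function $u(s)=|V|^{-1}\langle\lambda(s)\chi_{V},\chi_{V}\rangle$ is central. Here I would first note that an [IN]-group is unimodular: $|V|=|tVt^{-1}|=\Delta(t)^{-1}|V|$ forces the modular function $\Delta\equiv 1$. Unimodularity then lets me introduce the conjugation unitaries $(C_{t}f)(x)=f(t^{-1}xt)$ on $L^{2}(G)$, which satisfy $C_{t}\lambda(s)C_{t}^{*}=\lambda(tst^{-1})$ and, because $tVt^{-1}=V$, fix $\chi_{V}$. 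Consequently $u(tst^{-1})=|V|^{-1}\langle\lambda(s)C_{t}^{*}\chi_{V},C_{t}^{*}\chi_{V}\rangle=u(s)$, so $u$ is central and $\tau$ is tracial.

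The main obstacle I expect lies in the forward direction, namely the verification of the trace property: one must manufacture a single central positive-definite function in $A(G)$ out of the purely geometric [IN]-hypothesis, and the natural candidate $\chi_{V}$ only does the job after observing that [IN] forces unimodularity and after implementing conjugation by genuine unitaries on $L^{2}(G)$. By contrast the reverse direction is comparatively soft, using only the inclusion $A(G)\subseteq C_{0}(G)$ and the elementary bound $|u(s)|\le u(e)$ for positive-definite functions.
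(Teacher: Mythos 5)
Your proof is correct and follows essentially the same route as the paper: the level set $\{|u|\ge 1-\varepsilon\}$ of the class function $u(s)=\tau(\lambda(s))\in A(G)\subseteq C_0(G)$ for one direction, and the vector state $|V|^{-1}\langle\,\cdot\,\chi_V,\chi_V\rangle$ for the other. The only cosmetic difference is that you verify centrality via the conjugation unitaries $C_t$ (after noting unimodularity), whereas the paper does the same computation by a direct change of variables in the integral; your explicit remark that the trace identity extends from $\lambda(G)$ to $VN(G)$ by normality and weak$^*$ density is a point the paper leaves implicit.
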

\begin{proof}
Assume that $\text{tr}\in VN(G)_{*}$ is a tracial state. Then the function $f(g)=\text{tr}(\lambda(g))\in A(G)\subseteq C_{0}(G)$, and thus we have a conjugate-invariant compact neighbourhood, e.g.
$$
\{g\in G\,|\, |f(g)|\geq \frac{1}{2}\}.
$$
Conversely, assume that $K$ is a compact neighbourhood which is conjugate-invariant, and let $\xi_K$ be the $L^{2}$-normalized indicator function of $K.$ Consider the state $\phi(\cdot )=\langle\cdot\xi_K,\xi_K\rangle\in VN(G)_{*}.$ Then we have for all $g,h\in G$ (note that [IN] groups are unimodular)
$$
\phi(\lambda(gh))=\langle \lambda(gh)\xi_K,\xi_K\rangle=\int_{G} \xi_K(h^{-1}g^{-1}x)\overline{\xi_K(x)} dx=\int_{G}\xi_K(h^{-1}x)\overline{\xi_K(gx)} dx=
$$
$$
=\int_{G}\xi_K(h^{-1}x)\overline{\xi_K(xg)} dx=\int_{G}\xi_K(h^{-1}xg^{-1})\overline{\xi_K(x)}dx=\int_{G}\xi_K(g^{-1}h^{-1}x)\overline{\xi_K(x)}dx=
$$
$$
=\langle \lambda(hg)\xi_K,\xi_K\rangle=\phi(\lambda(hg)).
$$
Thus $\phi$ is tracial.
\end{proof}
\begin{cor}
If $G$ is an [IN]-group and $\omega\in VN(G)$ is a weight inverse such that $\ker\, \Omega^{*}=\{0\}$, then $\spec\, A(G,\omega)\subset G_{\mathbb{C},\lambda}.$
\end{cor}
\begin{proof}
As $\ker\,\Omega^{*}=\{0\}$ if follows from \lemma~\ref{trivcoker} that for all $\sigma\in \spec\,A(G,\omega)$ we have  $\sigma(\mathcal{H})$ is dense in $\mathcal{H}.$
Consider the following two inequalities 
\begin{equation}\label{sw}
\begin{array}{ccc}
\sigma^{*}\sigma\leq \sigma^{*}\sigma+\omega^{*}\omega, & \omega^{*}\omega\leq \sigma^{*}\sigma+\omega^{*}\omega.
\end{array}
\end{equation}
Letting $R=(\sigma^{*}\sigma+\omega^{*}\omega)^{\frac{1}{2}}$, we can deduce from~\eqref{sw} -- similar to the proof of \lemma~\ref{lem1}-- that there exist  $U,V\in VN(G)$ such that 
\begin{equation}\label{ur}
\begin{array}{ccc}
U R=\sigma,& VR=\omega.
\end{array}
\end{equation}
Moreover, we have
$$
R(U^{*}U+V^{*}V)R=\sigma^{*}\sigma+\omega^{*}\omega=R^{2},
$$
so that the density of the range of $R$ (implied by the density of the range of $\omega$) gives 
\begin{equation}\label{kuntz}
U^{*}U+V^{*}V=I.
\end{equation}
In particular, we obtain  that $U^{*}U$ and $V^{*}V$ commute. 

Assume towards contradiction that $\sigma^{*}(\mathcal{H})\cap \omega^{*}(\mathcal{H})=\{0\}.$ Then by~\eqref{ur} and the injectivity of $R$ we can deduce that also $U^{*}(\mathcal{H})\cap V^{*}(\mathcal{H})=\{0\}.$ Thus 
$$(U^{*}U)(V^{*}V)=(V^{*}V)(U^{*}U)=0,$$
so that~\eqref{kuntz} implies that $U,V$ are partial isometries. As $\ker\,\Omega^{*}=\{0\}$, it follows from \lemma~\ref{trivcoker} that $\ker \sigma^{*}=\ker \omega^{*}=\{0\}$, and by~\eqref{ur}, $\ker U^{*}=\ker V^{*}=\{0\}$. Thus $U^{*}$ and $V^{*}$ are isometries in $VN(G)$ such that~\eqref{kuntz} holds,  i.e. $(U,V)$ is a representation of the Cuntz algebra $O_2$ in $VN(G).$ This contradicts the claim that $VN(G)$ admits a tracial state $\phi$:
$$
1=\phi(I)=\phi(U^{*}U+V^{*}V)=\phi(U^{*}U)+\phi(V^{*}V)=\phi(UU^{*})+\phi(VV^{*})=2.
$$
\end{proof}


\begin{prop}\label{sin}
If $G$ is a [SIN]-group then $\spec\, A(G,\omega)\subset G_{\mathbb{C},\lambda}$ for any weight inverse $\omega$. 
\end{prop}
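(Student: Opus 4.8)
The plan is to reduce everything to the corollary already proved for [IN]-groups. Since every [SIN]-group is [IN], that corollary delivers the inclusion $\spec\, A(G,\omega)\subset G_{\mathbb{C},\lambda}$ as soon as one knows that $\ker\,\Omega^{*}=\{0\}$. So the entire task collapses to showing that, for a [SIN]-group, the injective $2$-cocycle $\Omega$ furnished by Lemma~\ref{lem1} automatically has dense range, i.e. $\ker\,\Omega^{*}=\{0\}$; every other ingredient is inherited from the [IN] case.

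The mechanism I would exploit is finiteness of the group von Neumann algebra on $G\times G$. First I would observe that $G\times G$ is again [SIN]: if $\{U_\alpha\}$ is a base of conjugation-invariant neighbourhoods of $e$ in $G$, then $\{U_\alpha\times U_\beta\}$ is a base of conjugation-invariant neighbourhoods of $(e,e)$ in $G\times G$, since conjugation acts coordinatewise. Next, for each conjugation-invariant compact neighbourhood $L\subset G\times G$ I would form, exactly as in the proposition preceding the [IN]-corollary, the normal tracial state $\tau_{L}(\cdot)=\langle\,\cdot\,\xi_{L},\xi_{L}\rangle$, where $\xi_{L}$ is the $L^{2}$-normalized indicator of $L$. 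The key claim is that the family $\{\tau_{L}\}_{L}$ is faithful: if $x\geq 0$ in $VN(G\times G)$ satisfies $\tau_{L}(x)=0$ for all $L$, then $x^{1/2}\xi_{L}=0$ for all $L$; since $x^{1/2}$ commutes with the right regular representation of $G\times G$ (which is unimodular, being [IN]) and the right translates $\{\rho(g)\xi_{L}\}$ span a dense subspace of $L^{2}(G\times G)$ as $L$ shrinks to $(e,e)$, one gets $x^{1/2}=0$, hence $x=0$. A faithful family of normal tracial states forces $VN(G\times G)$ to be finite, because for any isometry $u$ one has $\tau_{L}(1-uu^{*})=\tau_{L}(u^{*}u)-\tau_{L}(uu^{*})=0$ for all $L$, whence $uu^{*}=1$.

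With finiteness in hand the cokernel computation is immediate. Writing the polar decomposition $\Omega=u|\Omega|$, the injectivity of $\Omega$ (Lemma~\ref{lem1}) gives $u^{*}u=1$, so $u$ is an isometry in the finite algebra $VN(G\times G)$ and is therefore a unitary; consequently $uu^{*}=1$, i.e. $\Omega$ has dense range and $\ker\,\Omega^{*}=\{0\}$. Invoking the corollary for [IN]-groups then yields $\spec\, A(G,\omega)\subset G_{\mathbb{C},\lambda}$ for every weight inverse $\omega$, as claimed.

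The step I expect to require the most care is the faithfulness of the trace family, which rests on the density of the span of $\{\rho(g)\xi_{L}\}$; this is precisely where the [SIN] hypothesis (arbitrarily small conjugation-invariant neighbourhoods) together with unimodularity is genuinely used, and it is also the only place where passing from $G$ to $G\times G$ matters. An alternative, if one prefers not to reconstruct traces, is to cite the known equivalence between the [SIN] property and finiteness of $VN(G)$ and then note that finiteness passes to the tensor product $VN(G)\bar\otimes VN(G)\simeq VN(G\times G)$; the finite-algebra argument producing $\ker\,\Omega^{*}=\{0\}$ is then identical.
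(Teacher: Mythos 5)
Your proof is correct, but it reaches the crucial range-intersection condition by a genuinely different route than the paper. Both arguments begin the same way: [SIN] is equivalent to finiteness of $VN(G)$ (the paper cites \cite[13.10.5]{dixmier}), finiteness passes to $VN(G\times G)\simeq VN(G)\bar\otimes VN(G)$, and an injective element of a finite von Neumann algebra automatically has dense range, so $\ker\Omega^*=\{0\}$. (Your hands-on construction of a faithful family of tracial states on $VN(G\times G)$ is more work than needed and carries a small technical burden about finding arbitrarily small \emph{compact} conjugation-invariant neighbourhoods; your own alternative of simply citing the [SIN]--finiteness equivalence, which is exactly what the paper does, is the cleaner option.) The divergence is in how one then obtains $\sigma^{*}(\h)\cap\omega^{*}(\h)\ne\{0\}$ for every $\sigma\in\spec A(G,\omega)$: you delegate this to the preceding corollary for [IN]-groups, whose proof rules out a representation of the Cuntz algebra $\mathcal O_2$ inside $VN(G)$ by means of the tracial state; the paper instead observes that finiteness together with Lemma~\ref{trivcoker} gives $\ker\sigma=\ker\sigma^{*}=\{0\}$, so that $(\sigma^{*})^{-1}$ and $(\omega^{*})^{-1}$ are densely defined operators affiliated with the finite algebra $VN(G)$, and since the affiliated operators of a finite von Neumann algebra form a $*$-algebra, the sum $(\sigma^{*})^{-1}+(\omega^{*})^{-1}$ is densely defined --- its domain being precisely $\sigma^{*}(\h)\cap\omega^{*}(\h)$, which is therefore dense, not merely nonzero. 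Your route is shorter given what is already established on the page and needs only nontriviality of the intersection to invoke Corollary~\ref{spectrum}; the paper's argument stays entirely within the finite-algebra framework and delivers the stronger conclusion that the intersection is dense.
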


\begin{proof}
By \cite[13.10.5]{dixmier}, $G$ is a  [SIN]-group if and only if $VN(G)$ is finite. Therefore, as $\ker\; \Omega=\{0\}$, we have $\ker\; \Omega^*=\{0\}$, giving, by \lemma~\ref{trivcoker}, $\ker\sigma^*=\{0\}$ and hence by finiteness of $VN(G)$, $\ker\sigma=\{0\}$ for any $\sigma\in \spec A(G,\omega)$. As both $(\sigma^*)^{-1}$ and $(\omega^*)^{-1}$ are densely defined and affiliated with $VN(G)$, and the set of affiliated elements is an algebra, we obtain that $\sigma^*(L^2(G))\cap\omega^*(L^2(G))\ne\{0\}$ as the domain of $(\sigma^*)^{-1}+(\omega^*)^{-1}\in \overline{VN(G)}$. Therefore, $\spec A(G,\omega)\subset G_{\mathbb C,\lambda}$, by \corollary~\ref{spectrum}.
\end{proof}

We remark  that $VN(G)$ is finite for all Moore groups $G$ and hence any such $G$ is [SIN].

\begin{cor}
If $G$ is discrete, then $\spec\, A(G,\omega)=G$ for any weight inverse $\omega$. 
\end{cor}

\begin{proof}
$G$ clearly does not contain any non-trivial image of a homomorphism $\mathbb{R}\to G,$ and we can deduce that the complexification is trivial, i.e. $G_{\mathbb{C},\lambda}=G.$ Moreover, as $G$ is a [SIN]-group, by \proposition~\ref{sin}  the spectrum of $A(G,\omega)$ is the smallest possible, that is $G$. 
\end{proof}
\medskip

An important class of weights that has been studied in the literature are weights extended from closed abelian  or compact subgroups, see \cite[Proposition 3.25]{gllst}. 
The next statements show that for all such weights we have the inclusion  of the spectrum  into the complexification.  We first recall the construction of a so called central weight on the dual of a compact group following  \cite{gllst}, see also \cite{lst}. 

If  $H$ is compact, we have the quasi-equivalence $\lambda\simeq\oplus_{\pi\in\widehat  H}\pi$ which gives $VN(H)\simeq\oplus_{\pi\in \widehat H} M_{d_{\pi}}$, where $d_\pi$ is the dimension of the representation space $H_\pi$. We have also the Plancherel theorem giving the isomorphism 
$$L^2(H)\simeq\oplus_{\pi\in\widehat H}^{\ell^2}\sqrt{d_\pi}S_{d_\pi}^2$$ with
$\langle \xi,\eta\rangle=\sum_{\pi\in\widehat H} d_\pi\text{tr}(\hat\xi(\pi)\hat\eta(\pi)^*)$, for $\xi$, $\eta\in L^2(H)$, where $S_{n}^2$ reffers to Hilbert-Schmidt class on $\ell^2_n$  and $\hat\xi(\pi)=\int_H\xi(s)\pi(s^{-1})ds$. 
Recall also that for $A = (A(\pi))_{\pi\in \widehat{H}} \in VN(H)$ we have
	$$\Gamma(A) = \oplus_{\pi, \pi'}\left[ U^*_{\pi,\pi'}(\oplus_{\sigma \subseteq \pi\otimes \pi'} A(\sigma))U_{\pi, \pi'} \right]$$
where for $\sigma, \pi,\pi'\in\widehat{H}$, the notation $\sigma \subseteq \pi\otimes \pi'$ means that $\sigma$ is a subrepresentation of $\pi\otimes \pi'$, and $U_{\pi, \pi'}$ is the unitary appearing in the irreducible decomposition of $\pi \otimes \pi'$.

If $\omega_H$ is a central positive weight inverse then  $\omega_H\simeq \oplus_{\pi\in \widehat H}\omega(\pi)I_\pi$ for a function $\omega:\widehat H\to(0,+\infty)$ which satisfies
$\omega(\pi)\omega(\rho)\leq\omega(\sigma)$ for any $\sigma$, $\pi$, $\rho\in \widehat H$ such that $\sigma\subseteq\pi\otimes\rho$, see \cite[3.3.2]{gllst}. We refer the reader to \cite{lst} and \cite[section 5]{gllst} for numerous examples of central weight inverses. 

Recall the conjugate representation  $\bar\pi$ of $\pi\in \widehat H$ which is defined as follows: we denote the linear dual space of $H_\pi$ by $H_{\bar\pi}$ and for $A\in B(H_\pi)$, let $A^t$ in $B(H_{\bar\pi})$ be its linear adjoint; for $s\in H$ we define $\bar\pi(s)=\pi(s^{-1})^t$; it is a unitary irreducible representation on $H_{\bar\pi}$ and $\bar{\bar\pi}=\pi$, as equivalence classes. 

For the antipode $S$ and the central weight $\omega_H$ we have  $S(\omega_H)\simeq\oplus_{\pi\in \widehat H}\omega(\bar\pi)I_\pi$. Indeed, we observe first that
$$\langle S(\omega_H)\xi,\eta\rangle=\langle\omega_H\bar\eta,\bar\xi\rangle=\sum_{\pi\in\widehat H}\omega(\pi)\text{tr}(\hat{\bar\eta}(\pi)\hat{\bar\xi}(\pi)^*).$$
Because of the unitary equivalence  $\hat{\bar\xi}(\pi)^*=\int_H\xi(s)\pi(s)ds\sim\int_{ H}\xi(s)\bar\pi(s^{-1})^tds$ and $\hat{\bar\eta}(\pi)=\int_H\bar\eta(s)\pi(s^{-1})ds\sim\int_H\bar\eta(s)(\bar\pi(s^{-1})^*)^tds$ with the same unitary operator, we obtain
$$\langle S(\omega_H)\xi,\eta\rangle=\sum_{\pi\in\widehat H}\omega(\pi)\text{tr}(\hat{\xi}(\bar\pi)\hat{\eta}(\bar\pi)^*)=\sum_{\pi\in\widehat H}\omega(\bar\pi)\text{tr}(\hat{\xi}(\pi)\hat{\eta}(\pi)^*),$$
that shows the statement.

If  $\sigma\subseteq\pi\otimes\rho$ then  $\rho\subseteq\bar\pi\otimes\sigma$ which  follows from 
\cite[2.30, 2.34(b,c)]{hewitt_ross}; this  gives $\omega(\sigma)\omega(\bar\pi)\leq\omega(\rho)$ which together with the expression for the antipode  gives the inequality
 \begin{equation}\label{swift}
 (S(\omega_H)\otimes I)\Gamma(\omega_H)\leq I\otimes\omega_H,
 \end{equation}
 the arguments  for this are similar to that given in \cite[3.3.2]{gllst}.
 
 If $H$ is abelian,  the weight inverse condition for positive weight inverse  $\omega_H$ can be also equivalently written  as ~\eqref{swift} since in this case
$$
\begin{array}{ccc}
 \omega_{H}(s^{-1})\omega_{H}(st)\leq \omega_{H}(t) & \text{ for almost all }s,t\in \widehat H.
\end{array}
$$

\begin{thm}\label{very}
Let $H\subseteq G$ be a closed abelian or compact subgroup of $G$ and $\iota_H:VN(H)\to VN(G)$ be the canonical injective homomorphism. 
Let $\omega_{H}\in VN(G)$ be a central weight inverse. If $\omega:=\iota_H(\omega_{H})$, 
then every $\sigma\in \spec\, A(G,\omega)$ is a weight inverse and \begin{equation}\label{samesame}S(\sigma)\sigma=S(\omega)\omega.\end{equation}
Moreover, 
$\spec\, A(G,\omega)\subset G_{\mathbb{C},\lambda}.$
\end{thm}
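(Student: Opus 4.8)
The plan is to avoid the invariant-subspace criterion of Lemma~\ref{lemma_image}, which is not directly available here: the spectral projections of $\omega=\iota_H(\omega_H)$ lie in $VN(G)$ rather than in its commutant $VN(G)'=\rho_G(G)''$, so when $\omega_H$ is genuinely unbounded below in inverse, $\omega$ need not be bounded below on any $VN(G)$-invariant subspace. Instead I would establish part $(ii)$, the identity $S(\sigma)\sigma=S(\omega)\omega$, \emph{directly} for every $\sigma\in\spec A(G,\omega)$, by turning the purely formal calculation with $M=(S\otimes\iota)(W\Omega)W\Omega$ given after Proposition~\ref{thm1} into a rigorous one, the decisive new ingredient being inequality~\eqref{swift}. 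As preliminary reductions: since $\iota_H$ is a normal $*$-homomorphism with $\Gamma\circ\iota_H=(\iota_H\otimes\iota_H)\circ\Gamma_H$ and $S\circ\iota_H=\iota_H\circ S_H$, applying $\iota_H\otimes\iota_H$ to~\eqref{swift} lifts it to the operator inequality $(S(\omega)\otimes I)\Gamma(\omega)\le I\otimes\omega$ on $L^2(G\times G)$; replacing $\omega_H$ by $|\omega_H^*|$ (central, hence normal, and preserved under the $*$-homomorphism $\iota_H$) lets me assume $\omega_H\ge 0$ via Proposition~\ref{propo2} without changing $A(G,\omega)$ up to isometric isomorphism; and Proposition~\ref{cond}$(2)$ gives $\ker\Omega^*=\{0\}$, so Lemma~\ref{trivcoker} yields $\ker\sigma^*=\{0\}$ for all $\sigma$, which I will need at the end.

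The heart of the argument is to make sense of $M$. The map $S\otimes\iota$ is not defined on all of $VN(G)\bar\otimes B(\h)$, but it is only ever needed in the combinations $(I\otimes X)(S\otimes\iota)(W\Omega)$ for $X\in\{\omega,\sigma\}$, and exactly the slice-map computation of Proposition~\ref{thm1} — built from~\eqref{anti} and $\Omega^*\Gamma(X^*)=X^*\otimes X^*$ — shows at the level of normal functionals that this combination agrees with the genuinely bounded operator $(S(X)\otimes I)W^*(I\otimes X)$. Inequality~\eqref{swift} is what guarantees that the resulting $M$ is an honest bounded operator, since it bounds $(S(\omega)\otimes I)\Gamma(\omega)=(S(\omega)\otimes I)W^*(I\otimes\omega)W$ by $I\otimes\omega$. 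Multiplying on the right by $W\Omega$ and using $(I\otimes X)W\Omega=W(X\otimes X)$ (which is just $\Gamma(X)\Omega=X\otimes X$) together with $W^*(I\otimes X)W=\Gamma(X)$, I obtain $(I\otimes\omega)M=S(\omega)\omega\otimes\omega$ and $(I\otimes\sigma)M=S(\sigma)\sigma\otimes\sigma$ for this one bounded $M$. Since $\ker\omega=\{0\}$, the operator $I\otimes\omega$ is injective, so the first identity forces $M=S(\omega)\omega\otimes I$; substituting this into the second gives $S(\omega)\omega\otimes\sigma=S(\sigma)\sigma\otimes\sigma$, and slicing off the non-zero second leg yields $S(\sigma)\sigma=S(\omega)\omega$, which is $(ii)$.

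Everything else follows from the tools already in place. From $(ii)$ and $\ker\sigma^*=\{0\}$ I get $\ker\sigma\subseteq\ker S(\sigma)\sigma=\ker S(\omega)\omega=\{0\}$, so together with $\sigma\sigma^*\otimes\sigma\sigma^*\le\Gamma(\sigma\sigma^*)$ the element $\sigma$ is a weight inverse, giving $(i)$. The closability of $T_\sigma\colon\omega\xi\mapsto\sigma\xi$ and the relation $\Gamma(T_\sigma)=T_\sigma\otimes T_\sigma$ are then precisely the argument of Theorem~\ref{mainthm}$(iii)$, which uses only $(ii)$ and $\ker\sigma^*=\{0\}$; hence $\sigma=T_\sigma\omega$ with $T_\sigma\in G_{\mathbb C,\lambda}$, and Corollary~\ref{spectrum} with Remark~\ref{vnGomega} gives $\spec A(G,\omega)\subset G_{\mathbb C,\lambda}$. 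The compact and abelian cases enter only through the derivation of~\eqref{swift} (via the conjugate representation, respectively the pointwise submultiplicativity $\omega_H(s^{-1})\omega_H(st)\le\omega_H(t)$); after that the reasoning is uniform. I expect the delicate point to be exactly the rigorous slice-map justification that $(I\otimes X)(S\otimes\iota)(W\Omega)=(S(X)\otimes I)W^*(I\otimes X)$ and that $M$ is bounded — this is where~\eqref{swift} is indispensable and where, for a general (non virtually abelian) $G$ with an arbitrary weight, the argument would otherwise break down because $S\otimes\iota$ fails to be defined.
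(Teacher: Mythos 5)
Your proposal is correct and follows essentially the same route as the paper's proof: the paper also reduces to positive $\omega_H$, lifts~\eqref{swift} through $\iota_H$, uses the resulting inequality to produce the bounded operator $\Phi$ with $(I\otimes\omega)\Phi=(S(\omega)\otimes I)\Gamma(\omega)$ (its rigorous stand-in for $(S\otimes\iota)(W\Omega)$), runs the slice-map identity of Proposition~\ref{thm1} to get $(I\otimes\sigma)\Phi W^*=(S(\sigma)\otimes I)W^*(I\otimes\sigma)$ for every $\sigma$, and then concludes via $M=\Phi W^*W\Omega=S(\omega)\omega\otimes I$ exactly as you describe. The only cosmetic difference is that the paper names $\Phi$ explicitly and verifies the bilinear identity $\langle\Phi W^*(x\otimes y),u\otimes v\rangle=\langle W\Omega(\bar u\otimes y),\bar x\otimes v\rangle$ before substituting $\sigma$, which is precisely the "delicate point" you flag.
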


\begin{proof} Let $\Omega$ be the $2$-cocycle associated to $\omega$. 
By \proposition~\ref{cond}  $\ker \;\Omega^{*}=\{0\}$ and hence $\ker\sigma^{*}=\{0\} $ for every $\sigma\in \spec\, A(G,\omega)$ by \lemma ~\ref{trivcoker}. To show that $\sigma$ is a weight inverse, it is enough to see the equality~\eqref{samesame}, which will  imply $\ker\sigma=\{0\}.$ To prove~\eqref{samesame}, let us without any loss of generality assume that $\omega_{H}$ is positive. Then $\omega_H$ satisfies ~\eqref{swift}. 
 It is clearly  preserved by $\iota_H$  giving $$(S(\omega)\otimes I)\Gamma(\omega)\leq I\otimes \omega. $$ As in the proof of \lemma ~\ref{lem1} we can conclude that there is a unique element $\Phi\in VN(G)\bar\otimes VN(G)$ such that
$$
(I\otimes \omega)\Phi=(S(\omega)\otimes I)\Gamma(\omega).
$$
If $W$ is the fundamental multiplicative unitary, the latter equality gives
\begin{equation}\label{relation}
(I\otimes\omega)\Phi W^*=(S(\omega)\otimes I)W^*(I\otimes\omega). 
\end{equation}
Let $\xi,\eta, \tilde\xi, \tilde\eta\in \h.$ We retain the notation $\psi_{x,y}$ for the normal functional $\psi_{x,y}(T)=\langle Tx,y\rangle$, $T\in B(\h)$.
By \lemma~\ref{star_lemma},
\begin{equation}\label{star1}
\psi_{\xi,\tilde\xi}(S(\iota\otimes\psi_{\sigma^*\eta,\tilde\eta}(\Omega^*W^*)))=
\langle(1\otimes\sigma^*)W(S(\sigma^*)\otimes 1)\xi\otimes\eta,\tilde\xi\otimes\tilde\eta\rangle,
\end{equation}
for any $\sigma\in \spec\, A(G,\omega)$. In particular, it holds for $\omega$ which combined with (\ref{relation}) gives
\begin{equation}\label{star2}
\psi_{\xi,\tilde\xi}(S(\iota\otimes\psi_{\omega^*\eta,\tilde\eta}(\Omega^*W^*)))=
\langle W\Phi^*\xi\otimes\omega^*\eta,\tilde\xi\otimes\tilde\eta\rangle,
\end{equation}
Fix $\sigma\in \spec\, A(G,\omega)$. As the range of $\omega^*$ is dense in $\h$, there exists $\{\eta_n\}_n\subset\h$ such that $\omega^*\eta_n\to\sigma^*\eta$.  From (\ref{star1}) and (\ref{star2})  we obtain
\begin{eqnarray*}
&&\langle(1\otimes\sigma^*)W(S(\sigma^*)\otimes 1)\xi\otimes\eta,\tilde\xi\otimes\tilde\eta\rangle=\psi_{\xi,\tilde\xi}(S(\iota\otimes\psi_{\sigma^*\eta,\tilde\eta}(\Omega^*W^*)))\\&&=\lim_{n\to\infty}\psi_{\xi,\tilde\xi}(S(\iota\otimes\psi_{\omega^*\eta_n,\tilde\eta}(\Omega^*W^*)))=\lim_{n\to\infty}\langle W\Phi^*\xi\otimes\omega^*\eta_n,\tilde\xi\otimes\tilde\eta\rangle\\ &&=\langle W\Phi^*\xi\otimes\sigma^*\eta,\tilde\xi\otimes\tilde\eta\rangle\end{eqnarray*}
giving
\begin{equation}\label{wiry}
(I\otimes\sigma) \Phi W^{*}=(S(\sigma)\otimes I)W^{*}(I\otimes\sigma).
\end{equation}
Reasoning as in the remark after the proof of \proposition~\ref{thm1}, we have that the operator $$M:=(\Phi W^*) W \Omega$$ satisfies
\begin{eqnarray}\label{sigmasigma}
(I\otimes \sigma)M&=&(S(\sigma)\otimes I)W^{*}(I\otimes\sigma)W\Omega\\&=&(S(\sigma)\otimes I)\Gamma(\sigma)\Omega=
S(\sigma)\sigma\otimes \sigma,\nonumber
\end{eqnarray}
for every $\sigma\in \spec\, A(G,\omega)$. As $\omega\in \spec\, A(G,\omega) $ and  $\ker \omega=\{0\},$ it tells that  $M=S(\omega)\omega\otimes I$ and $(I\otimes\sigma)M=(S(\omega)\omega)\otimes \sigma$ which together with (\ref{sigmasigma}) give the equality 
$S(\omega)\omega=S(\sigma)\sigma$.
It now  implies $\sigma^{*}(\h)\cap\omega^{*}(\h)\neq\{0\}$  and hence by \proposition~\ref{cond} and  \corollary~\ref{spectrum}, we get the claimed inclusion for the spectrum.
\end{proof}

\begin{thm}\label{reduction}
Let $H\subseteq G$ be a closed subgroup, 
$\omega_H\in VN(H)$  be a weight inverse 
and $\omega:=\iota_H(\omega_H)$. 
Assume that 
$\spec A(G,\omega)\subset G_{\mathbb C ,\lambda}$. Then every
$\sigma\in \spec\, A(G,\omega)$ is of the form 
$$
\sigma=\lambda_G(s)\iota_H(\tilde{\sigma}),
$$
for some $s\in G$ and $\tilde{\sigma}\in \spec \, A(H,\omega_{H}).$
\end{thm}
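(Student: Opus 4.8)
The plan is to exploit the hypothesis $\spec\,A(G,\omega)\subset G_{\mathbb C,\lambda}$ to write each spectral point as a complexification element applied to $\omega$, to strip off its unitary (group-element) part via the polar decomposition, and then to prove that the remaining positive part is \emph{localised on} $H$; this localisation is the heart of the matter. Fix $\sigma\in\spec\,A(G,\omega)$. Since $\Gamma\circ\iota_H=(\iota_H\otimes\iota_H)\circ\Gamma_H$, the cocycle lifts as $\Omega=(\iota_H\otimes\iota_H)(\Omega_H)$, so all structure descends from $H$. By the standing hypothesis together with the identification of \remark~\ref{vnGomega} and \corollary~\ref{spectrum}, we have $\sigma=T_\sigma\omega$ with $T_\sigma\in G_{\mathbb C,\lambda}$. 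Applying \lemma~\ref{polardec} I write $\sigma=\lambda_G(s)\beta$, where $s\in G$ is unique, $\beta\in\spec\,A(G,\omega)\cap G_{\mathbb C,\lambda}\omega$, and $P:=T_\beta=|T_\sigma|\geq0$. As $\lambda_G(s)$ is already the desired group factor, it suffices to show that $\beta\in\iota_H(VN(H))$ and that $\tilde\sigma:=\iota_H^{-1}(\beta)$ lies in $\spec\,A(H,\omega_H)$; the latter is then automatic, since injectivity of $\iota_H\otimes\iota_H$ and $\Gamma\circ\iota_H=(\iota_H\otimes\iota_H)\circ\Gamma_H$ turn $\Gamma(\beta)\Omega=\beta\otimes\beta$ into $\Gamma_H(\tilde\sigma)\Omega_H=\tilde\sigma\otimes\tilde\sigma$.

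Thus the problem reduces to the positive operator $P$, which is affiliated with $VN(G)$ and satisfies $\Gamma(P)=P\otimes P$. As in \proposition~\ref{bijection}, $\Gamma(P^{it})=P^{it}\otimes P^{it}$ with $P^{it}$ bounded, so by \cite[Chapter~11, \theorem~16]{takesaki} one has $P^{it}=\lambda_G(g_t)$ for a strongly continuous one-parameter subgroup $t\mapsto g_t$ of $G$. If I can establish that $g_t\in H$ for all $t$, then $P$ is affiliated with $\iota_H(VN(H))$, hence $P=\iota_H(P_0)$ with $P_0\geq 0$ affiliated to $VN(H)$ and $\Gamma_H(P_0)=P_0\otimes P_0$; consequently $\beta=P\omega=\iota_H(P_0\omega_H)$ with $P_0\omega_H$ bounded, which completes the reduction and yields $\sigma=\lambda_G(s)\iota_H(\tilde\sigma)$.

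The mechanism I would use for the claim $g_t\in H$ is analytic continuation combined with the localisation of $\omega$ along $H$. Writing $P=\exp A$ with $A$ self-adjoint, we have $P^{z}=\lambda_G(g_{\operatorname{Im}z})\exp(\operatorname{Re}(z)A)$; by \proposition~\ref{deform} the operators $P^{\theta}\omega$ are bounded for $\theta\in[0,1]$, so $z\mapsto P^{z}\omega$ is bounded and weak$^*$-analytic on the strip $0\le\operatorname{Re}z\le1$, with boundary values $P^{it}\omega=\lambda_G(g_t)\omega$. The decisive input is that in the disintegration $L^{2}(G)\cong L^{2}(H)\otimes L^{2}(H\backslash G)$ the operator $\omega=\iota_H(\omega_H)$ acts fibrewise as $\omega_H$, so for $\xi,\eta$ supported near $e$ the matrix coefficient $m(z)=\langle P^{z}\omega\xi,\eta\rangle$ has boundary value $m(it)=\langle\lambda_G(g_t)\omega\xi,\eta\rangle$, which vanishes once $g_t$ has carried $\omega\xi$ off the support of $\eta$. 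The set $\{t:g_t\in H\}$ is a closed subgroup of $\mathbb R$; if it is not all of $\mathbb R$ it is $\{0\}$ or $c\mathbb Z$, so $m(it)$ vanishes on boundary intervals of positive measure, and boundary uniqueness for bounded analytic functions on a strip forces $m\equiv0$. This contradicts $m(0)=\langle\omega\xi,\eta\rangle\neq0$, available because $\omega_H$ is injective with dense range. Hence $g_t\in H$ for all $t$.

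I expect the genuine obstacle to be precisely this last step — proving that the complexification (positive) part $P$ is supported on $H$ — and in particular making the boundary-uniqueness/support argument rigorous uniformly in $\omega$. The delicate point is that left translation by $g_t\notin H$ does not permute the coset fibres, so the support bookkeeping must be done carefully in the disintegration (including the modular function in the non-unimodular case); moreover, when $\omega_H$ is itself a group-element weight the "nonzero diagonal" input degenerates, but in that case $P=1$ and the claim is trivial, so the two regimes should be separated. Everything after $g_t\in H$ is routine: one reads off $P=\iota_H(P_0)$, sets $\tilde\sigma=\iota_H^{-1}(\beta)\in\spec\,A(H,\omega_H)$, and concludes $\sigma=\lambda_G(s)\iota_H(\tilde\sigma)$.
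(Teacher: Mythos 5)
Your route is genuinely different from the paper's, and the skeleton is sensible: the reduction via \lemma~\ref{polardec} to showing that $P=|T_\sigma|$ (equivalently the one-parameter group $P^{it}=\lambda_G(g_t)$) lives over $H$ is a correct reformulation of the theorem. But the one step that carries all the content --- the localisation $g_t\in H$ --- is precisely the step you leave open, and as written it does not go through. Concretely: (i) your ``nonzero diagonal input'' $m(0)=\langle\omega\xi,\eta\rangle\neq 0$ for $\xi,\eta$ supported near $e$ requires $e\in\operatorname{supp}\omega$ in Eymard's sense; injectivity and dense range of $\omega$ do not give this, and it fails not only when $\omega_H$ is a group element but for any $\omega_H=\lambda_H(h_0)\omega'$ with $h_0\neq e$, so your two-regime dichotomy is incomplete. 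One must instead centre $\eta$ at a point $h_0\in\operatorname{supp}\omega\subseteq H$ and redo the separation. (ii) The separation itself is where the ``support bookkeeping'' can genuinely fail: $\omega\xi$ is supported in $H\overline{V}$, so if $\eta$ is allowed to live on an $H$-saturated set the disjointness condition becomes $g_t\notin H\overline{V}\,\overline{V}^{-1}H$, and double cosets need not separate points from $H$ (one can have $e\in\overline{HgH}$ with $g\notin H$, e.g.\ $G=SL(2,\mathbb R)$, $H$ the diagonal subgroup, $g$ unipotent). The argument only works if $\eta$ is kept on a genuinely small set $h_0V$, using that $h_0\overline{V}\,\overline{V}^{-1}H$ is closed with intersection $H$ over all $V$, so that a single fixed pair $(\xi,\eta)$ gives vanishing of $m(it)$ on a whole interval. (iii) The analytic-function input (analyticity and boundedness of $z\mapsto\langle P^z\omega\xi,\eta\rangle$ on the closed strip, continuity up to the boundary, and a Privalov-type boundary uniqueness theorem) is asserted, not proved; \proposition~\ref{deform} only gives boundedness of $P^{t}\omega$ for real $t\in[0,1]$. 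None of these obstacles looks fatal, but together they are the whole theorem, and the proposal does not close them.

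For contrast, the paper's proof avoids the complex analysis entirely. Under the standing hypothesis one already has $S(\sigma)\sigma=S(\omega)\omega=\iota_H(S(\omega_H)\omega_H)$ from \proposition~\ref{thm1}, hence $\Gamma(S(\sigma))(\sigma\otimes\sigma)=\Gamma(S(\sigma)\sigma)\Omega$ lies in $(\iota_H\bar\otimes\iota_H)(VN(H)\bar\otimes VN(H))$; slicing with $f\in A(G)$ and using density of $\{f\sigma\}$ gives $(\iota\otimes f)(\Gamma(S(\sigma)))\sigma\in\iota_H(VN(H))$ for all $f$, and the preannihilator of the weak$^*$ closed span of these slices is a proper closed ideal of $A(G)$ whose hull produces the point $s$ with $\lambda(s)^*\sigma\in\iota_H(VN(H))$. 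That soft support-of-$\sigma$ argument is what your analytic continuation is trying to reproduce by hand; if you want to salvage your approach, the fixes in (i)--(iii) above are the places to work.
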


\begin{proof} The condition 
$\spec A(G,\omega)\subset G_{\mathbb C,\lambda}$ implies that any $\sigma\in \spec A(G,\omega)$ admits a factorisation  $\sigma=T\omega$ for some $T\in G_{\mathbb C,\lambda}$; 
hence $\sigma^*\supset \omega^*T^*$ showing that $\sigma^*L^2(G)\cap\omega^*L^2(G)\ne\{0\}$. By \proposition~\ref{thm1}, we get $S(\sigma)\sigma=S(\omega)\omega=\iota_H(S(\omega_H)\omega_H)\in\iota_H(VN(H))$. Moreover,  as $T^*=(\sigma\omega^{-1})^*=(\omega^{-1})^*\sigma^*$ and $\ker T^*=\{0\}$, $\ker\sigma^*=\{0\}$ showing that 
$\sigma$ is a weight inverse. It follows that
\begin{equation}\label{inH}
\Gamma(S(\sigma)\sigma)\Omega=\Gamma(S(\sigma))(\sigma\otimes\sigma)\in (\iota_H\bar\otimes \iota_H)(VN(H)\bar\otimes VN(H) ),
\end{equation}
and is independent of particular $\sigma\in \spec\, A(G,\omega).$ Applying a slice map $\iota\otimes f,$  $f\in A(G)$, to~\eqref{inH}  and using the fact that  the elements of the form 
$f\sigma$ form a  dense subspace in $A(G)$  (as the range of $\sigma$ is dense) we obtain
\begin{equation}\label{cut}
\begin{array}{ccc}
(\iota \otimes f)(\Gamma(S(\sigma)))\sigma\in \iota_H(VN(H)), & \text{for all $f\in A(G).$}
\end{array}
\end{equation}
Consider the subspace $$\mathcal A=\overline{\{(\iota \otimes f)(\Gamma(S(\sigma)))\;|\;f\in A(G)\}}^{w^*}\subseteq VN(G)$$ (the weak$^{*}$ closure). By~\eqref{cut}, we have $\mathcal A\sigma\subseteq \iota_H(VN(H)).$ Let $I_{\mathcal A}\subseteq A(G)$ be the preannihilator  of $\mathcal A$, i.e.
$$I_{\mathcal A}=\mathcal A_{\perp}:=\{f\in A(G)\;|\; \langle A, f\rangle =0\ \forall A\in\mathcal A\}.$$ We claim that $I_{\mathcal A}$ is equal to the subspace
$$
\{f\in A(G)\;|\; (\iota\otimes f)(\Gamma(S(\sigma)))=0\},
$$
and indeed, this follows from the action of $A(G)$ on $VN(G)$ being commutative. Moreover, the same argument shows that $I_{\mathcal A}\subseteq A(G)$ is a non-trivial closed ideal, as $\sigma\neq 0$. By duality, we have $$\mathcal A=(\mathcal A_\perp)^\perp=\{x\in VN(G)\;|\; f(x)=0,\, \forall f\in I_{\mathcal A} \}.$$ As $I_{\mathcal A}\neq A(G)$, there is at least one $s\in G$ such that $\lambda_G(s)^{*}$ annihilates $I_{\mathcal A},$ and hence $\lambda_G(s)^{*}\in \mathcal A.$ It follows that 
$$
\lambda_G(s)^{*}\sigma\in \iota_H(VN(H)),
$$
and moreover that the pre-image $\tilde{\sigma}=\iota_H^{-1}(\lambda_G(s)^{*}\sigma)\in \spec\, A(H,\omega_{H}).$ This gives the statement of the theorem.
\end{proof}

Combining methods in the proofs of \theorem~\ref{very} and \theorem~ \ref{reduction} we obtain a generalisation of \theorem~\ref{very} to weights induced from  non-central weights of  compact subgroups of $G$. 
\begin{thm}\label{veryext}
Let $H\subseteq G$ be a compact subgroup, and $\omega_H\in VN(H)$ be a weight inverse on the dual of $H$. 
 Then with $\omega=\iota_{H}(\omega_H),$ we have $$\spec\,A(G,\omega)\subseteq G_{\mathbb{C},\lambda}.$$ Moreover, every $\sigma \in \spec\,A(G,\omega)$ is of the form $\lambda_G(s)\iota_{H}(\tilde{\sigma})$ for some $s\in G$ and $\tilde\sigma \in \spec\,A(H,\omega_H).$
\end{thm}
\begin{proof}
 Let $F\subset \widehat H$ be finite and set $\tilde P_F$ to be the central projection in $VN(H)$ given by $\tilde P_F=\oplus_{\pi\in\hat H}\chi_F(\pi)I_{\pi}$, where $\chi_F$ is the indicator function of $F$. Set
 $$C_F=\{\pi\in\widehat H\,|\,\pi\subseteq\pi_1\otimes\pi_2, \pi_1,\pi_2\in F\}.$$
 Then using arguments as in \cite[3.3.2]{gllst}, we obtain  $\tilde P_F\otimes \tilde P_F\leq\Gamma(\tilde P_{C_F})$ and hence
 $(\tilde P_F\otimes \tilde P_F)\Gamma(\tilde P_{C_F})=\tilde P_F\otimes \tilde P_F$, which gives
 $$(\tilde P_F\otimes \tilde P_F)W^*(I\otimes \tilde P_{C_F})=(\tilde P_F\otimes \tilde P_F)W^*.$$
 As $(\tilde P_F\otimes \tilde P_F)W^*\in VN(H)\otimes\left(\oplus_{\pi\in F}M_{d_\pi}\right)$ we can apply $S\otimes\iota$ to the last equality to obtain
 $$
(I\otimes \tilde P_{F})W(S(\tilde P_{F})\otimes \tilde P_{C_F})=(I\otimes \tilde P_{F})W(S(\tilde P_{F})\otimes I) 
$$
and
\begin{equation}\label{equ}
\Gamma(\tilde P_{F})(S(\tilde P_{F})\otimes \tilde P_{C_F})=\Gamma(\tilde P_{F})(S(\tilde P_{F})\otimes I).
\end{equation}
 
In $\overline{VN(H\times H)}$, we consider the element
$$
\tilde\Phi^{*}=\Gamma(\omega_H^{*})(S( \omega_H^{*})\otimes I) (I\otimes{(\omega_H^{*})}^{-1}).
$$
Note that for $\xi\in L^2(H)$, $\eta\in \mathcal D(( \omega_H^*)^{-1})$,
\begin{eqnarray*}
\Gamma(\tilde P_{F})\tilde\Phi^{*}(S(\tilde P_{F})\xi\otimes \eta)&=&\Gamma(\tilde P_{F})\Gamma(\omega_H^{*})(S( \omega_H^{*})\otimes I)(S(\tilde P_{F})\xi\otimes (\omega_H^*)^{-1}\eta)\\
&=&\Gamma(\omega_H^{*})\Gamma(\tilde P_{F})(S(\tilde P_{F})\otimes I)(S(\tilde \omega^{*})\xi\otimes {(\omega_H^{*})}^{-1}\eta)\\
&=&\Gamma(\omega_H^{*})\Gamma(\tilde P_{F})(S(\tilde P_{F})\otimes \tilde P_{C_F})(S(\omega_H^{*})\xi\otimes {(\omega_H^{*})}^{-1}\eta)
\\
&=&\Gamma(\omega_H^{*} \tilde P_{F})((S(\omega_H^{*}\tilde P_{F})\xi\otimes {(\omega_H^{*})}^{-1}\tilde P_{C_F}\eta)
\end{eqnarray*}
from which we conclude that $\Gamma(\tilde P_{F})\tilde\Phi^{*}(S(\tilde P_{F})\otimes I)$ extends to a bounded operator in $VN(H\times H).$ 

Now let $P_{F}=\iota_{H}(\tilde P_{F})$, $P_{C_F}=\iota_{H}(\tilde P_{C_F})$ and $\Phi^{*}=(\iota_{H}\otimes\iota_H)(U)(\iota_H\otimes\iota_H)(|\tilde \Phi^{*}|)\in\overline{VN(G\times G)},$ where $\tilde\Phi^*=U|\tilde\Phi^*|$ is the polar decomposition of $\tilde\Phi^*$, and $(\iota_H\otimes\iota_H)(|\tilde \Phi^{*}|)$ is the extension of $\iota_H\otimes\iota_H$ to the positive operator $|\tilde \Phi^{*}|\in \overline{VN(H\times H)}$, see \cite[Section 2]{gllst}.  

 Applying \cite[\proposition~2.1]{gllst} we can conclude that $|\tilde \Phi^*|(I\otimes \tilde\omega^*)\in VN(H\times H)$ and  $\iota_H\otimes\iota_H(|\tilde \Phi^*|)(I\otimes \omega^*)\in VN(G\times G)$, which show that $\Phi^*(I\otimes\omega^*)$ is bounded and 
 
\begin{equation}\label{relation}
\Phi^{*}(I\otimes \omega^{*})=\Gamma(\omega^{*})(S(\omega^{*})\otimes I).
\end{equation}
Let $\Omega$ be the 2-cocycle associated with $\omega$. As in the proof of \theorem~\ref{very}  we have for $\sigma\in  \spec\, A(G,\omega)$
\begin{equation}\label{star1}
\psi_{\xi,\tilde\xi}(S(\iota\otimes\psi_{\sigma^*\eta,\tilde\eta}(\Omega^*W^*)))=
\langle(1\otimes\sigma^*)W(S(\sigma^*)\otimes 1)\xi\otimes\eta,\tilde\xi\otimes\tilde\eta\rangle,
\end{equation} and 
\begin{equation}\label{star2}
\psi_{\xi,\tilde\xi}(S(\iota\otimes\psi_{\omega^{*}\eta,\tilde\eta}(\Omega^*W^*)))=
\langle W\Phi^*(\xi\otimes\omega^{*}\eta),\tilde\xi\otimes\tilde\eta\rangle,
\end{equation} where
 $\xi,\eta, \tilde\xi, \tilde\eta\in \h$.
 
Take  $\xi\in S(P_{F})L^{2}(G)$ and $\tilde \eta\in P_{F} L^{2}(G)$. Then the right-hand side of~\eqref{star2} becomes 
$$
\langle W(\Gamma(P_{F})\Phi^*(S(P_{F})\otimes I))(\xi\otimes\omega^{*}\eta),\tilde\xi\otimes\tilde\eta\rangle.
$$
Fix $\sigma\in \spec\, A(G,\omega)$. As the range of $\omega^{*}$ is dense in $\h$, there exists $\{\eta_n\}_n\subset\h$ such that $\omega^{*}\eta_n\to\sigma^*\eta$.  From (\ref{star1}) and (\ref{star2}) together with  $\Gamma(P_{F})\Phi^*(S(P_{F})\otimes I)\in VN(G\times G),$ we get
\begin{eqnarray*}
&&\langle(1\otimes\sigma^*)W(S(\sigma^*)\otimes 1)\xi\otimes\eta,\tilde\xi\otimes\tilde\eta\rangle=\psi_{\xi,\tilde\xi}(S(\iota\otimes\psi_{\sigma^*\eta,\tilde\eta}(\Omega^*W^*)))\\&&=\lim_{n\to\infty}\psi_{\xi,\tilde\xi}(S(\iota\otimes\psi_{\omega^*\eta_n,\tilde\eta}(\Omega^*W^*)))=\lim_{n\to\infty}\langle W\Gamma(P_{F})\Phi^*(S(P_{F})\otimes I)\xi\otimes\omega^*\eta_n,\tilde\xi\otimes\tilde\eta\rangle\\ &&=\langle W\Gamma(P_{F})\Phi^*(S(P_{F})\otimes I)
\xi\otimes\sigma^*\eta,\tilde\xi\otimes\tilde\eta\rangle\end{eqnarray*}
giving
\begin{equation}\label{wiry}
(\Gamma(P_{F})\Phi^{*}(S(P_{F})\otimes I))(I\otimes \sigma^{*}) =\Gamma(P_F)\Gamma(\sigma^{*})(S(\sigma)^{*}S(P_F)\otimes I).
\end{equation}

Let $M$ be in the commutant of $\iota_{H}(VN(H))\bar\otimes VN(G)$.
Clearly, $M$ commutes with the left-hand side  of (\ref{wiry}) and as $\tilde P_F\to I$ weak$^*$, we obtain that it commutes with $\Gamma(\sigma^{*})(S(\sigma)^{*}\otimes I)$. Therefore,  
\begin{equation}\label{mmm}
\Gamma(\sigma^{*})(S(\sigma)^{*}\otimes I)\in \iota_{H}(VN(H))\bar\otimes VN(G).
\end{equation}
If we let $f\in VN(G)_{*}$ be arbitrary, then it follows from~\eqref{mmm}
$$
S(\sigma)(\iota\otimes f)(\Gamma(\sigma))\in \iota_{H}(VN(H)).
$$
We now proceed in a similar way as in the proof of \theorem~\ref{reduction} and let $$\mathcal{A}=\overline{\{(\iota\otimes f)(\Gamma(\sigma))\,|\, f\in A(G)\}}^{w^*}.$$ We can argue as before that the ideal $I_{\mathcal A}:=\mathcal A_{\perp}\ne A(G)$ and hence there is $s\in G$ such that  $f(s)=0$ for all $f\in I_{\mathcal A}$. As $I_{\mathcal A}^{\perp}=\mathcal A$, $\lambda_G(s)\in \mathcal A$ and therefore $S(\sigma)\lambda_G(s)\in \iota_H(VN(H))$ and $\lambda_G(s^{-1})\sigma\in \iota_H(VN(H))$.  It follows that there is an $\tilde\sigma\in\spec \, A(H,\omega_H)$ such that $\lambda_G(s^{-1})\sigma=\iota_{H}(\tilde\sigma)$, and hence 
\begin{equation}\label{eq}
\sigma=\lambda_G(s)\iota_{H}(\tilde\sigma).
\end{equation}
As $\spec\, A(H,\omega_H)\subseteq H_{\mathbb{C},\lambda},$ we conclude  that $\spec\,A(G,\omega)\subseteq G_{\mathbb{C},\lambda}$.

\end{proof}

Let $G$ be a connected simply connected  Lie group and $\mathcal g$ its associated Lie algebra. We also fix the symbol $H$  and $\mathcal h$ for a connected closed Lie subgroup of $G$ and its Lie algebra respectively. We write $\lambda_G$ and $\lambda_H$ for the left regular representations of $G$ and $H$ respectively. 
The next statement generalizes \cite[\theorem~5.9, \theorem~6.19, \theorem~7.11, \theorem~8.20 and \theorem~9.11]{gllst}, where it was proved for compact connected Lie groups with a weight induced from a closed Lie subgroup, the Heisenberg group,  the reduced Heisenberg group, the Euclidean motion group on $\mathbb R^2$, and the simply connected cover of it  with weights induced from abelian connected Lie subgroups. We note that the proofs of the latter theorems from \cite{gllst} required lengthy and specific  arguments for each particular group. We also answer \cite[Question 11.4]{gllst} as our technique does not require the existence and density of entire vectors for the left regular representation which was essential to prove the mentioned results in \cite{gllst}. 

\begin{thm}\label{lie}
Let $G$ be a connected simply connected Lie group and let $H$ be either abelian or compact connected closed subgroup of $G$. Suppose $\omega_H$ is a positive weight inverse on the dual of $H$ and $\omega=\iota_H(\omega_H)$ is the extended weight  inverse on the dual of $G$. Then 
$$\spec\, A(G,\omega)\simeq \{\lambda_G(s)\exp{i\partial\lambda_G(X)}\;|\;s\in G, X\in \mathcal h, \exp{i\partial\lambda_H(X)}\in \spec\, A(H,\omega_H)\}.$$
\end{thm}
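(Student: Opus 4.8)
The strategy is to reduce the statement to the two inclusions $\spec A(G,\omega_G)\subset G_{\mathbb C,\lambda}$ and $\spec A(H,\omega_H)\subset H_{\mathbb C,\lambda}$, and then to read off the shape of an arbitrary spectral point by feeding the reduction \theorem~\ref{reduction} into the factorisation \proposition~\ref{factorisation}, while tracking how $\iota_H$ interacts with the analytic one-parameter groups. Throughout I identify $\spec A(G,\omega_G)$ with a subset of $G_{\mathbb C,\lambda}$ via $\sigma\mapsto T_\sigma=\sigma\omega_G^{-1}$ as in \remark~\ref{vnGomega}, and likewise for $H$; with this convention the asserted equality is an equality of subsets of $G_{\mathbb C,\lambda}$. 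To set it up, recall that compact and abelian groups are [SIN], so \proposition~\ref{sin} yields $\spec A(H,\omega_H)\subset H_{\mathbb C,\lambda}$ in every case (note $H$ is compact when $G$ is, being a closed subgroup of a compact group). For $G$ itself: in case $(1)$ the weight $\omega_H$ is central, so \theorem~\ref{very} gives $\spec A(G,\omega_G)\subset G_{\mathbb C,\lambda}$; in case $(2)$ the group $G$ is compact, hence [SIN], and \proposition~\ref{sin} (equivalently \proposition~\ref{cond}$(1)$ with \corollary~\ref{spectrum}) gives the same inclusion. In particular \theorem~\ref{reduction} applies throughout.

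For the inclusion $\subseteq$, let $\sigma\in\spec A(G,\omega_G)$. By \theorem~\ref{reduction}, $\sigma=\lambda_G(s_0)\iota_H(\tilde\sigma)$ for some $s_0\in G$ and $\tilde\sigma\in\spec A(H,\omega_H)$. Writing $\tilde\sigma=T_{\tilde\sigma}\omega_H$ with $T_{\tilde\sigma}\in H_{\mathbb C,\lambda}$ and applying \proposition~\ref{factorisation} to the connected Lie group $H$ gives $T_{\tilde\sigma}=\lambda_H(t)\exp(i\partial\lambda_H(X))$ for some $t\in H$ and $X\in\mathcal h$. Under the amplification description $\iota_H(\,\cdot\,)=\,\cdot\,\otimes 1$ associated with $L^2(G)\cong L^2(H)\bar\otimes L^2(H\backslash G)$ one has $\lambda_G(\exp_G(rX))=\lambda_H(\exp_H(rX))\otimes 1$ for $X\in\mathcal h$, whence $\partial\lambda_G(X)$ is the closure of $\partial\lambda_H(X)\otimes 1$ and $\iota_H(\exp(i\partial\lambda_H(X)))=\exp(i\partial\lambda_G(X))$. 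Consequently $T_\sigma=\lambda_G(s_0t)\exp(i\partial\lambda_G(X))$, which has the required form with $X\in\mathcal h$; moreover \lemma~\ref{polardec} applied to $H$ shows that the positive part $|T_{\tilde\sigma}|=\exp(i\partial\lambda_H(X))$ corresponds to $\lambda_H(t)^*\tilde\sigma\in\spec A(H,\omega_H)$, so the side condition is satisfied.

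For $\supseteq$, take $s\in G$ and $X\in\mathcal h$ with $\exp(i\partial\lambda_H(X))\in\spec A(H,\omega_H)$, that is $\beta:=\exp(i\partial\lambda_H(X))\omega_H\in VN(H)$ and $\Gamma_H(\beta)\Omega_H=\beta\otimes\beta$. Since $\Gamma_G\circ\iota_H=(\iota_H\otimes\iota_H)\circ\Gamma_H$ and, by uniqueness of the associated cocycle, $\Omega_G=(\iota_H\otimes\iota_H)(\Omega_H)$, applying $\iota_H\otimes\iota_H$ to the $H$-identity gives $\Gamma_G(\iota_H(\beta))\Omega_G=\iota_H(\beta)\otimes\iota_H(\beta)$, so $\iota_H(\beta)\in\spec A(G,\omega_G)$; multiplying by $\lambda_G(s)$, which satisfies $\Gamma_G(\lambda_G(s))=\lambda_G(s)\otimes\lambda_G(s)$, keeps us in the spectrum. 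The amplification identities give $\lambda_G(s)\iota_H(\beta)=\lambda_G(s)\exp(i\partial\lambda_G(X))\omega_G$, so this point is identified with $\lambda_G(s)\exp(i\partial\lambda_G(X))\in G_{\mathbb C,\lambda}$, as desired. Finally the resulting bijection is a homeomorphism because $\sigma\mapsto\sigma\omega_G^{-1}$ is a weak$^*$-homeomorphism (\remark~\ref{vnGomega}) and the Gelfand topology on the spectrum is the relative weak$^*$ topology.

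The step I expect to be the main obstacle is the clean passage of the unbounded analytic objects through $\iota_H$, concretely the identities $\iota_H(\exp(i\partial\lambda_H(X)))=\exp(i\partial\lambda_G(X))$ and $\exp(i\partial\lambda_G(X))\omega_G=\iota_H(\exp(i\partial\lambda_H(X))\omega_H)$ for the generators $\partial\lambda(X)$. The amplification picture $\iota_H(\,\cdot\,)=\,\cdot\,\otimes 1$ makes these transparent once one confirms that $\partial\lambda_G(X)$ is the closure of $\partial\lambda_H(X)\otimes 1$ for $X\in\mathcal h$ — which reduces to the agreement $\exp_G|_{\mathcal h}=\exp_H$ of the exponential maps together with Stone's theorem — but care is needed to ensure that $\iota_H$ genuinely extends to a normal $*$-isomorphism onto the von Neumann subalgebra $\iota_H(VN(H))$, so that spectral projections and the Borel functional calculus (and hence $\exp(i\,\cdot\,)$ applied to the affiliated generators) are preserved.
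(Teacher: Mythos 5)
Your proof is correct and follows essentially the same route as the paper: establish $\spec\, A(G,\omega_G)\subset G_{\mathbb C,\lambda}$ via \theorem~\ref{very} in case (1) and via compactness/[SIN] in case (2), then combine \theorem~\ref{reduction} with \proposition~\ref{factorisation} (and \lemma~\ref{polardec}) to pin down the form of each spectral point. The only cosmetic differences are that you apply the reduction to $H$ before the Lie factorisation where the paper does the reverse order, and that you verify the converse inclusion directly via $\Omega_G=(\iota_H\otimes\iota_H)(\Omega_H)$ where the paper simply cites \cite[Proposition 2.1]{gllst}.
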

\begin{proof}
By \theorem~\ref{very}, \theorem~\ref{veryext} and the remark after \lemma~\ref{lemma_image} we have
$$\spec\,A(G,\omega)\subset G_{\mathbb C,\lambda},$$ and hence by  \proposition~\ref{factorisation} for any $\sigma\in \spec\, A(G,\omega)$ there is a unique $s\in G$ and $X\in \mathcal g$ such that $\text{Ran}(\omega)\subset\mathcal D(\exp i\partial\lambda_G(X))$, $\exp i\partial\lambda_G(X)\omega$ is bounded and $$(\sigma,u)=(\lambda_G(s)\exp i\partial\lambda_G(X)\omega,u)_\omega$$ for all $u\in A(G,\omega)$.
By \theorem~\ref{reduction} and \theorem~\ref{veryext},  we have that $$\lambda_G(s)\exp i\partial\lambda_G(X)\omega=\lambda_G(t)\iota_H(\tilde\sigma)$$ for some $\tilde\sigma\in \spec\, A(H,\omega_H)$ and 
$t\in G$.   By assumption of the theorem, there exist $\tilde s\in H$ and  $\tilde X\in\mathcal h$  such that and $\tilde\sigma=\lambda_H(\tilde s)\exp i\partial\lambda_H(\tilde X)$. As $\iota_H( \exp i\partial\lambda_H(\tilde X))=\exp i\partial\lambda_G(\tilde X)$, we obtain by applying \cite[\proposition~ 2.1]{gllst} that $\iota_H(\tilde\sigma)=\lambda_G(\tilde s)\exp i\partial\lambda_G(\tilde X)$  from which we get  the inclusion "$\subset$". 

Conversely, if $\exp i\partial\lambda_H(X)\in \spec\, A(H,\omega_H)$, then $\exp{i\partial\lambda_G(X)}\in \spec\, A(G,\iota_H(\omega_H))$, which follows from \cite[\proposition~ 2.1]{gllst}. 
\end{proof}

\begin{example}\rm 
Consider the $"ax+b"$-group  that can be represented as the group $G$ of matrices:
$$G=\left\{g=\left(\begin{array}{cc}a&b\\0&a^{-1}
\end{array}\right)\left| \right. a>0, b\in\mathbb R\right\}.$$
It is known to be the semidirect product of the subgroups
$$A=\left\{\left(\begin{array}{cc}a&0\\0&a^{-1}
\end{array}\right) \left | \right. a>0 \right\}\text{ and } B=\left\{\left(\begin{array}{cc}1&b\\0&1
\end{array}\right)\left | \right. b\in\mathbb R\right\}.$$
The Lie algebra of $G$ is generated by $H$ and $E$ given by 
$$H=\left(\begin{array}{cc}1/2&0\\0&-1/2\end{array}\right)\text{ and }E=\left(\begin{array}{cc}0&1\\0&0\end{array}\right)$$
that satisfy $[H,E]=E$.

We have the one parameter subgroups  $A=\{\exp{tH}\;|\; t\in\mathbb R\}$ and $B=\{\exp{sE}\;|\; s\in\mathbb R\}$. 
The unitary dual of $G$ can be described as follows:
$$\widehat G=\{\sigma_{\pm}\}\cup\{\chi_r:r\in\mathbb R\},$$
where $\chi_r$ is a one-dimensional representation for $r\in\mathbb R$ and $\sigma_{\pm}$ are two infinite-dimensional representations defined on $L^2(\mathbb R)$ given as
\begin{eqnarray*}
\sigma_{\pm}(g)f(x)&=&\exp{(\pm i se^x)}f(x+t)\\
\chi_r(g)&=&e^{itr}
\end{eqnarray*}
for $g=\exp{sE}\exp{tH}$. Moreover, we have that
$$\left\{\begin{array}{l}
\partial\sigma_{\pm}(H)f=f'\\
\partial\sigma_{\pm}(E)f(x)=\pm ie^{x}f(x)
\end{array}\right.$$
We have the following  quasi-equivalence of the left regular representation $\lambda$:
$$\lambda\simeq \sigma_+\oplus\sigma_-,$$
and $VN(G)\simeq B(L^2(\mathbb R))\oplus B(L^2(\mathbb R))$ (see \cite[chapter 4.3]{jean_book} and \cite[chapter 3.8]{fuhr}).

For a bounded below weight function $w:\widehat B\simeq\mathbb R\to(0,\infty)$, write $M_{w^{-1}}$ for the multiplication operator on $L^2(\mathbb R)$  by the function $w^{-1}\in L^\infty(\mathbb R)$ and consider $\widetilde{M_{w^{-1}}}=\mathcal F^{-1}M_{w^{-1}}\mathcal F\in VN(\mathbb R)\simeq VN(B)$, where $\mathcal F$ is the Fourier transform. Let $\omega=\iota_B(\widetilde{M_{w^{-1}}})$ be the extended weight inverse.  Then, by \cite[\proposition~3.26 ]{gllst}, $\omega\sim (\omega(\sigma_+),\omega(\sigma_-))$, 
where $\omega(\sigma_{\pm})\xi(x)=w^{-1}(\mp e^x)\xi(x)$. 
By \theorem~\ref{lie},  $$\spec\, A(G,\omega)\simeq \{\lambda_G(g)\exp{i\partial\lambda_G(X)}\;|\;g\in G, X=sE, e^{sx}/w(x)\in L^\infty(\mathbb R)\}.$$
In particular, if $w(x)=\beta^{|x|}$, then
$$\spec\, A(G,\omega)\simeq \{\exp{(t\partial\lambda_G(H))}\exp{(s\partial\lambda_G(E))}\;|\; t\in\mathbb R, |\text{Im }s|\leq\ln\beta\}.$$
Similarly, we can start with a bounded below weight $\tilde w: \widehat A\simeq\mathbb R\to (0,\infty)$ and consider $\tilde\omega= \iota_A(\widetilde{M_{w^{-1}}})$. We have $\tilde\omega(\sigma_{\pm})=\mathcal F^{-1}\tilde w^{-1}(-x)\mathcal F$ and if $\tilde w=\tilde\beta^{|x]}$, then
\begin{eqnarray*}
\spec\, A(G,\tilde\omega)&\simeq& \{\lambda_G(g)\exp{i\partial\lambda_G(sH)}\;|\;g\in G, s\in\mathbb R, |s|\leq\ln\tilde\beta\}\\&=&\{\exp{(t\partial\lambda_G(E))}\exp{(s\partial\lambda_G(H))}\;|\; t\in\mathbb R, |\text{Im }s |\leq\ln \tilde\beta\}.
\end{eqnarray*}

\medskip

We note that by \cite{goodman} the left regular representation does not admit a dense subset of entire vectors, the fact that was an obstacle in \cite{gllst} for the study of the spectrum of $A(G,\omega)$.  The density of the set $\mathcal H_{w}(\lambda)$ of entire vectors was also important for the identification of $A(G,\omega)$ as a subset of the complexification $G_{\mathbb C}$ of $G$:  letting $\lambda_{\mathbb C}(\exp X)=\exp\partial\lambda(X)$, $X\in \mathcal g_{\mathbb C}$, one obtains a representation of $G_{\mathbb C}$ on $\mathcal H_w(\lambda)$, see \cite[\corollary~2.2]{goodman}; in general and in particular for the "$ax+b$"-group, it seems there is no natural way for $\lambda$ to be continued to a global representation of the complexified group and hence to see $G_{\mathbb C,\lambda}$ as a group. 
\end{example}


We refer the reader to \cite{gllst} for other specific examples of weights and precise descriptions of the spectrum of the associated Beurling-Fourier algebras (Examples 6.21, 7.13 and 8.22).


\section{\sc{\textbf{Some remarks and open questions}}}\label{questions}
In this section, we list open questions and make some remarks.
The most pressing question that we left unanswered is of course whether we can extend the point spectrum correspondence to general locally compact groups. Namely, 

\begin{question}
Does $\spec A(G,\omega)\subset G_{\mathbb C,\lambda}$  hold for any locally compact group $G$ and any weight inverse $\omega$?
 \end{question}
 
 Let $\Omega$ be the  $2$-cocycle, corresponding to $\omega$. 
Whether we can  answer the above question positively seems  to rely on whether the following statements are true: 

\begin{enumerate}[$(i)$]
\item $\ker \;\Omega^*=\{0\}$;
\item $S(\sigma)\sigma=S(\omega)\omega$ holds for all $\sigma\in \spec\, A(G,\omega)$;
\item $\sigma^{*}(\h)\cap \omega^{*}(\h)\ne\{0\}$ for any $\sigma\in \spec\, A(G,\omega)$;
\item any $\sigma\in \spec\, A(G,\omega)$ is a weight inverse.
\end{enumerate}

We have the implication $(ii)\Rightarrow (iii)$, as
\begin{equation}\label{cutt}
\begin{array}{cccc}
\sigma^*S(\sigma)^*\xi=\omega^*S(\omega)^*\xi\in \sigma^*(\h)\cap\omega^*(\h),&  \text{for all $\xi\in \h,$}
\end{array}
\end{equation}
and as $\ker\omega^*=\ker S(\omega)^*=\{0\}$, thus also $\ker \omega^{*}S(\omega)^{*}=\{0\}$, the subspace $\sigma^*(\h)\cap\omega^*(\h)$ is non-trivial. 
By  \corollary~\ref{spectrum}, $(i)$ and $(ii)$ give  the embedding  $\spec A(G,\omega)\subset G_{\mathbb C,\lambda}$; \theorem~\ref{mainthm} shows that $(i)$ and $(iii)$ imply $(ii)$ and $(iv)$. 
\\

As it was noticed in Section~\ref{section2} the definition of the product in $A(G,\omega)$ depends on the $2$-cocycle $\Omega$ rather than the weight inverse $\omega$, and $A(G,\omega)\simeq A(G,\Omega)$, where  $A(G,\Omega)$ is $A(G)$ (as a Banach space) with the modified product $$\begin{array}{cccc}f\cdot_\Omega g=\Gamma_{*}(\Omega (f\otimes g)), & \text{for $f,g\in A(G)$}.\end{array}$$
We note that the $2$-cocycle $\Omega$  associated with a weight inverse is always symmetric, i.e. invariant with respect to the flip automorphism on $VN(G)\bar\otimes VN(G)$, 
\begin{question}
Can one develop a similar theory for $A(G,\Omega)$ with general (symmetric) $2$-cocycle $\Omega$? What are the conditions on $\Omega$ that guarantee the existence of a weight inverse $\omega$ such that $\Gamma(\omega)\Omega=\omega\otimes\omega$?
\end{question}
 
 More specific questions are:
 \begin{question}
 For which symmetric $2$-cocycles $\Omega$  is the spectrum of $A(G,\Omega)$ non-empty?
 \end{question}

 It seems that it depends on whether or not $\Omega^{*}$, or perhaps $\Omega$, has a non-trivial kernel.
Below we give examples of $\Omega$ for which $\ker\; \Omega^{*}\neq\{0\}$ and  $\spec\, A(G,\Omega)=\emptyset$.

\begin{exa*}
Let $G=\mathbb{R},$ so that $VN(\mathbb{R})\cong L^{\infty}(\mathbb{R}).$ Let 
$$ 
\Upsilon(x)=\begin{cases} (1+x)^{x}, & \text{for $x\geq 0,$}\\ 0, & \text{for $x<0$,}\end{cases}
$$
and let  $\Omega:\mathbb{R}^{2}\to \mathbb{C}$ be the measurable function given by 
$$\Omega(x,y)=\begin{cases}
\frac{\Upsilon(x)\Upsilon(y)}{\Upsilon(x+y)}, & \text{for $x,y\geq 0,$}\\
0 ,& \text{otherwise.}
\end{cases}$$
It is easy to see that $\Upsilon(x)\Upsilon(y)\leq \Upsilon(x+y)$ for $x,y\geq 0$ and hence $\Omega(x,y)\leq 1.$
Thus $\Omega(x,y)\in L^{\infty}(\mathbb{R})\bar{\otimes}L^{\infty}(\mathbb{R})$. Moreover,
 it is not hard to see that $\Omega$ is a symmetric $2$-cocycle. Hence we have a well-defined algebra $A(\mathbb{R},\Omega).$
Using that $A(\mathbb{R})\cong L^{1}(\mathbb{R})$ via the Fourier transform, the $\Omega$-modified product between $f,g\in L^{1}(\mathbb{R})$ is
\begin{equation}\label{productin}
f*_{\tiny\Omega}g(x)={\displaystyle\int_{-\infty}^{\infty} }f(x-y)g(y)\Omega(x-y, y)\;dy=\int_{0}^{\infty}f(x-y)g(y)\frac{\Upsilon(x-y)\Upsilon(y)}{\Upsilon(x)}\; dy.
\end{equation}
Notice that if $x< 0,$ then $\Omega(x-y, y)=0$ for all $y\in \mathbb{R}$ and hence  $f*_{\Omega}g=0$ a.e. on $(-\infty, 0)$, in particular, $B:=L^1(\mathbb R^+)$ is a subalgebra of $(L^1(\mathbb R), \ast_\Omega)$. 

Next we will see that $\spec B$ is empty.
Let $B'=L^1(\mathbb R^+, \frac{1}{\Upsilon})$ with the convolution product $(f*g)(x)=\int_{0}^{\infty}f(x-y)g(y)dy$. Then $$f(x)\in B\mapsto f(x)\Upsilon(x)\in B'$$ is an isometric isomorphism.
Let $\phi$ be a linear multiplicative functional on $B'$. Then there is  $m\in L^{\infty}(\mathbb{R}^{+})$ such that  $m(x)\Upsilon(x)\in L^{\infty}(\mathbb{R}^{+})$ and 
$$
\begin{array}{ccc}
\phi(f)=\displaystyle{\int_{0}^{\infty}}m(x)f(x)\;dx, &\text{for $ f\in L^{1}(\mathbb{R}^{+})$}.
\end{array}
$$
As $\phi$ is multiplicative,  $m(x)=e^{a x}$ for some $a\in\mathbb C$. As $\lim_{x\to \infty}|e^{a x}\Upsilon(x)|=\lim_{x\to \infty}|e^{a x}(1+x)^{x}|\to \infty$ for any $a\in\mathbb{C}$,  the spectrum of $B'$ and hence of $B$ is empty. To see that this carries over to the actual algebra $A(\mathbb{R},\Omega),$ we use that $f\ast_{\Omega}g\in B$, for all $f,g\in L^{1}(\mathbb{R})$, and hence if we would have a multiplicative linear functional $\phi$ such that $\phi(f)=1$ for some $f\in L^{1}(\mathbb{R}),$ then $\phi(f\ast_{\Omega}f)=1$ and hence $\phi\in \spec B$, a contradiction.
\\

We modify the previous example slightly to obtain a continuous $2$-cocycle. Consider the function
$$\nu(x) =
\begin{cases}
e^{-\frac{1}{x}}, & \text{for $x\geq 0$,}\\
0,& \text{otherwise.}
\end{cases}
$$ It is easy to see that $\nu(x+y)\geq \nu(x)\nu(y)$ for all $x,y\in \mathbb{R}.$ Now let 
$$L(x)=\begin{cases}\nu(x)\Upsilon(x), &\text{for $x\geq 0$,}\\ 0, & \text{for $x<0.$}\end{cases} $$ and $$\Theta(x,y)=\begin{cases} \frac{L(x)L(y)}{L(x+y)}, & \text{for $x,y\geq 0,$}\\
0,& \text{otherwise,} \end{cases}$$ then $\Theta(x,y)\leq 1$ for all $x,y\in \mathbb{R}.$ Furthermore, we have $\Theta(x,y)\in C_{b}(\mathbb{R}^{2}).$ It is not that hard to see that also $A(\mathbb{R},\Theta)$ has empty spectrum (the argument is more or less the same as above). 
If $G_{\mathbb C,\lambda}\ne G$ then the  homomorphism $\bar \varphi:C_{b}(\mathbb{R})\rightarrow VN(G)$  from the proof of \proposition~\ref{weight_comp}  intertwines the coproducts and  the image $(\bar\varphi \otimes \bar\varphi)(\Theta)$ is then also a $2$-cocycle. It seems reasonable to expect that the resulting algebra would also have properties similar to the one above (i.e. not very nice spectrum-vice).
\end{exa*}

\begin{question}
What happens if we remove the condition $\ker\omega=\ker\omega^{*}=\{0\}$ from the definition of weight inverse?
\end{question}
We call such $\omega$ a partial weight inverse. A classification of partial weight inverses for discrete $G$ will be given in a separate paper.

\end{document}